\newcommand{\Z}{{\mathbb{Z}}}
\newcommand{\C}{{\mathbb{C}}}
\newcommand{\R}{{\mathbb{R}}}
\newcommand{\N}{{\mathbb{N}}}
\renewcommand{\P}{{\mathbb{P}}}
\newcommand{\E}{{\mathbb{E}}}
\newcommand{\cO}{{\mathcal{O}}}
\newcommand{\cH}{{\mathcal{H}}}
\newcommand{\cA}{{\mathcal{A}}}
\newcommand{\cF}{{\mathcal{F}}}
\newcommand{\cL}{{\mathcal{L}}}
\newcommand{\cC}{{\mathcal{C}}}
\newcommand{\HS}{\text{\textsc{H\!S}}}
\renewcommand{\d}{\,\mathrm{d}}
\theoremstyle{plain}
\newtheorem{thm}{Theorem}[section]
\newtheorem{lemma}[thm]{Lemma}
\theoremstyle{definition} 
\newtheorem{defi}{Definition}
\newtheorem{assu}{Assumption}
\newtheorem{es}{Example}
\newtheorem{remark}{Remark}
\title[Modulation Equation for SPDE in unbounded domains]{Modulation Equation for SPDEs in unbounded domains with space-time white noise -- Linear Theory}
\author[L.~A.~Bianchi]{Luigi Amedeo Bianchi}
    \address{
    L.~A.~Bianchi, 
    Institut f\"ur Mathematik\\ Universit\"at Augsburg\\ D-86135 Augsburg, Germany}
    \email{\href{mailto:luigi.bianchi@math.uni-augsburg.de}{luigi.bianchi@math.uni-augsburg.de}}
    \urladdr{\url{http://www.math.uni-augsburg.de/prof/ana/arbeitsgruppe/bianchi/}}
    \author[D.~Bl\"omker]{Dirk Bl\"omker}
    \address{D.~Bl\"omker, 
    Institut f\"ur Mathematik\\ Universit\"at Augsburg\\ D-86135 Augsburg, Germany}
    \email{\href{mailto:dirk.bloemker@math.uni-augsburg.de}{dirk.bloemker@math.uni-augsburg.de}}
    \urladdr{\url{http://www.math.uni-augsburg.de/prof/ana/arbeitsgruppe/bloemker/}}
  \subjclass[2010]{{\bf 60H15}, 60H05, 60G15}
  \keywords{Modulation equation, amplitude equation, unbounded domain, random fields, Gaussian processes, attractivity, approximation, linear theory, stochastic convolution}
  \thanks{Both authors are supported by the German Science Foundation (DFG),
grant number BL 535/9-2
"Mehrskalenanalyse stochastischer partieller Differentialgleichungen
(SPDEs)"}
\date{\today}
\begin{document}

  \begin{abstract}
    We study the approximation 
   of SPDEs on the whole real line near a change of stability 
   via modulation or amplitude equations, which acts as a replacement for the 
   lack of random invariant manifolds on extended domains. 
   Due to the unboundedness of  the underlying 
   domain a whole band of infinitely many eigenfunctions changes stability.
   Thus we expect not only a slow motion in time, but 
   also a slow spatial modulation of the dominant modes, 
   which is described by the modulation equation.
   
   As a first step towards a full theory of modulation equations for 
   nonlinear SPDEs on unbounded domains,  we focus, in the results presented here,
   on the linear theory for one particular example, the Swift-Hohenberg equation.
   These linear results are one of the key technical tools to carry over 
   the deterministic approximation results to the stochastic case with additive forcing.
   One technical problem for establishing error estimates rises from 
   the spatially translation invariant nature of space-time white noise on unbounded
   domains, which implies that at any time we can expect 
   the error to be always very large somewhere in space.
  \end{abstract}

\maketitle

\section{Introduction}

We study the approximation of stochastic partial differential equations (SPDEs)
on unbounded domains near a change of stability of a trivial solution via modulation or amplitude equations. 
Due to the unboundedness of the underlying domain a whole infinite band (i.e., an interval) 
of eigenfunctions changes sign and therefore the trivial solution its stability.
Thus neither the classical theory of invariant manifolds for PDEs nor
the recently developed theory of random invariant manifolds \cite{DLS:03,MZZ:08,DuWa:14,CLW:15a,CLW:15b}
can be applied. 

Modulation or amplitude equations are a replacement to overcome the lack of invariant manifolds,
and they serve as a universal normal form depending only on the type of bifurcation.
Being widely used in the physics literature, they are a tool to describe  
the evolution of the amplitude of the dominating pattern changing stability,
where close to bifurcation we expect not only a slow motion of the amplitude in time, 
but  also a slow modulation in space due to the band of eigenvalues changing sign.

For deterministic PDEs this theory is a well-established tool. 
See for example \cite{CoEc:90,KMS92,GS96, MiSc:95} for classical references, 
and the detailed comments later in this section.
But hardly anything is known for SPDEs on unbounded domains. 

As a starting point in this paper we consider the stochastic Swift-Hohenberg equation \cite{CrHo93,HoSw92}, 
which is a reduced model for the 
first convective instability in the Rayleigh-B\'enard model
and serves as one of the main examples in which pattern formation is studied. 
It is given as
\begin{equation}
\label{e:SH}
   \frac{\partial u}{\partial t} 
   = -(1+\partial_x^2)^2u + \varepsilon^2\nu u -u^3 +\varepsilon^{3/2}\xi
\end{equation}
on the whole real line with space-time white noise $\xi$. 
As we want to allow for periodic patterns, we do not assume any decay condition of solutions at infinity. 

The theory of higher order parabolic stochastic partial differential equations (SPDEs) 
on unbounded domains with additive translation invariant noise 
like space-time white noise is not that well studied, 
while for the wave equation with multiplicative noise 
there are many recent publications (see for example \cite{Kho14, DaSS09, Dal09, FI15})
and even more recent ones for parabolic equations with very rough noise \cite{HaLa:15a,HaLa:15b}.

In many cases parabolic equations with noise are studied subject to a spatial cut off or a decay condition at infinity.
This is the case, for example, in~\cite{EckHai2001}, where the cut-off is both  in the  real space as well as in the Fourier space. Another example is~\cite{Fun1995}. 
In~\cite{BreLi2006} and in a similar way in~\cite{KruSta2014,La2015}, the authors consider $L^2$-valued solutions, where an integral equation is consider instead of a PDE.
The choice of trace class noise in these examples implies that we have an $L^2$-valued Wiener processes 
and thus a decay condition at infinity, which in both cases leads to more regular solutions.

If we were to consider decay at infinity, 
we conjecture we'd recover similar results but with 
a point-forcing in the amplitude equation, due to the rescaling in space, 
needed to obtain the modulation equation.

The scaling of the equation involves small noise of order $\cO(\varepsilon^{3/2})$ 
and small distance from bifurcation of order $\cO(\varepsilon^{2}\nu)$. Due to the closeness to bifurcation, 
we expect small solutions and slow dynamics in time. 
Moreover, a whole band of Fourier modes around wave-number $k\pm1$ changes sign close to $\mu=0$, and 
thus we expect the dynamics to be given by a slow modulation of the complex amplitude $A$ 
of the  dominant pattern $e^{\pm ix}$:
\[
u(t,x) \approx \varepsilon A(\varepsilon^2 t, \varepsilon x) \cdot e^{ix}  + c.c. \;, 
\]
where $c.c.$ denotes the complex conjugate of the previous term. 
We expect such an estimate to hold on the slow time-scale with $t=\cO(\varepsilon^{-2})$.

The noise is chosen in a way that in the limit $\varepsilon\to 0$ both
noise and linear instability do influence the dynamics of the amplitude equation.
If we scale differently, we would lose one of the effects.
The choice of space-time white 
noise is mainly for simplicity of the analysis, in order to avoid further technical difficulties, 
as we expect space-time white noise to appear
in the amplitude equation in many  cases of coloured and thus smoother noise, thanks to the scaling limit. 
For a detailed discussion on coloured noise see \cite{BlHaPa07}, 
where large but still bounded domains of order $\cO(1)$ were treated.
Moreover, on bounded domains \cite{DBMPS:01}, with fractional noise as in~\cite{Blo:05} 
or $\alpha$-stable noise, the scaling of the noise's strength is different, but the result itself is similar.


\subsection{Previous results}

In the equation without noise ($\xi=0$)  Mielke, Schneider \& Kirrmann~\cite{KMS92} showed 
(see also~\cite{GS96} or numerous other publications by the authors) 
\begin{equation}
\label{e:AEdet}
\partial_{T}A=4\partial_X^2 A+\nu A-3|A|^2 A.
\end{equation}%
For the stochastic equation~\eqref{e:SH} on large but bounded domains of size $\cO(\varepsilon^{-1})$ 
Bl\"omker, Hairer \& Pavliotis~\cite{BlHaPa07} derived the stochastic amplitude equation
\begin{equation}
\label{e:AE}
\partial_{T}A=4\partial_X^2 A+\nu A-3|A|^2 A+\eta,
\end{equation}%
on a bounded domain of order $\cO(1)$ 
with complex-valued space-time white noise, although $\xi$ could have been quite regular in space.
The idea of splitting the solution in a Gaussian and a more regular part, 
which we will use in our approximation result, was already 
present in this paper,
but due to boundedness of the domain there were no problems with growth at infinity.  
See also  Mielke, Schneider, \& Ziegra~\cite{MSZ00} for large domains and no noise.

\begin{remark}
On bounded domains the noise has to be of strength $O(\varepsilon^2)$
to get interesting results.
With that scaling  Bl\"omker, Maier-Paape, \& Schneider~\cite {DBMPS:01}
showed that the amplitude of the dominant mode  is  independent of space and derived a stochastic 
ordinary differential equation (SDE) in $\mathbb{C}$ given by
\begin{equation*}
\partial_{T}A = \nu A-3|A|^2 A+\dot\beta,
\end{equation*}%
where $\dot\beta$ is a complex-valued white noise in time.
\end{remark}

 \begin{remark}
Spatially constant noise does not act directly on the dominant modes, 
and thus for noise of order $\cO(\varepsilon^{3/2})$ the noise term would just
disappear in the amplitude equation,
and we'd only recover the deterministic one stated in~\eqref{e:AEdet}.

If we increase the noise strength to be of order $\cO(\varepsilon)$ and set the noise to be spatially independent as $\xi=\sigma\dot\beta$,
where $\dot\beta$ is the derivative of a Brownian motion in $\R$, we obtain a time only white noise, and in that case 
additional terms in the amplitude equation arise
due to nonlinear interaction of the noise with itself in Fourier space.
 Mohammed, Bl\"omker \& Klepel  \cite{MoBlKl:13} obtained in this case
\[
\partial_{T}A=4\partial_X^2 A+\nu A +\tfrac32\sigma^2 A - 3|A|^2 A,
\]
which was already predicted by Hutt et.al.~\cite{Hutt3, Hutt1}
by using a formal centre manifold reduction.
 \end{remark}


\subsection{Nonlinear vs Linear}


In this paper we study the linear case, as the first step towards a full theory of modulation equations.
Also in the already cited results on large domains by Bl\"omker, Hairer \& Pavliotis~\cite{BlHaPa07} this is an essential step towards the full nonlinear result,
which is somewhat separated from the remaining nonlinear estimates,
Although here in weighted spaces the nonlinear estimate does not seem to be that straightforward as the nonlinearity is an unbounded operator.
In~\cite{BlHaPa07}, the authors used a splitting of the solution into a slightly more regular part in $H^1$
and a Gaussian part that was only bounded in $C^0$, but allowed for better estimates due to its Gaussian nature.
Here we focus on the Gaussian part only,
but in contrast to~\cite{BlHaPa07} we face the additional problem that solutions 
and thus error terms are immediately unbounded  in the spatial direction for $|x|\to\infty$.

Let us first state the mild formulation of~\eqref{e:SH}:
\begin{equation}
 \label{e:SHmild}
 u(t)= e^{t\cL_\varepsilon} u_0 - \int_0^t e^{(t-s)\cL_\varepsilon} u(s)^3 \d s + W_{\cL_\varepsilon}(t)\;,
\end{equation}
where the semigroup $e^{t\cL_\varepsilon}$ generated by $\cL_\varepsilon =-(1+\partial_x^2)^2 + \nu\varepsilon^2$
and the stochastic convolution $W_{\cL_\varepsilon}(t)$, which is the solution of the linear equation,
are defined and discussed in more detail in later sections.

For a result on modulation equations we need to compare this to the  mild formulation of~\eqref{e:AE},
given by 
\begin{equation}
 \label{e:AEmild}
A(T)= e^{T(4\partial_X^2+\nu)} A_0 - \int_0^T e^{(T-S)(4\partial_X^2+\nu)}3A(S)|A(S)|^2 \d S + \mathcal{W}_{4\partial_X^2+\nu}(T),
\end{equation}
with the semigroup $e^{T(4\partial_X^2+\nu)}$ and the corresponding 
stochastic convolution $ \mathcal{W}_{4\partial_X^2+\nu}(T)$
defined in terms of a complex-valued Wiener process $\mathcal{W}$.

The main result is to show that
\[
u(t,x) - [ A(\varepsilon^2 t,\varepsilon x)\cdot e^{ix} + c.c. ]\quad\text{is small,} 
\]
and to do so, we have three key steps:
\begin{itemize}
 \item \emph{Nonlinearity} - We need to show that we can control the difference between the nonlinear terms 
 in~\eqref{e:AEmild} and~\eqref{e:SHmild}. This should be similar although quite technical 
 to the deterministic case and we do not treat this here.
  \item \emph{Initial Conditions} - For the two terms containing the initial conditions $u_0$ and $A_0$, 
  we will see that they split in a more regular part that is treated by the known deterministic results 
  and a less regular Gaussian part, which is not good enough to be treated by standard deterministic methods. 
  We discuss these estimates in detail in the proof of Theorem \ref{thm:fullmain}.
   \item \emph{Stochastic Convolution} - The difference between these terms is the main new result of this paper, Theorem~\ref{thm:M}. 
   It is the key estimate to prove a full approximation result for stochastic modulation equations on $\R$.   
\end{itemize}
Let us remark that we focus on bounds in sufficiently good norms  here. 
We might be able to give much simpler bounds in $L^2_\text{loc}$-spaces,
but then we would not be able to control in this norm the cubic $-u^3$ later. 
Thus we focus on the supremum-norm, which is a good compromise.
It does not require any order of 
spatial derivatives, but it still good enough to bound the nonlinearity.
Unfortunately, in our weighted spaces, the nonlinearity is always an unbounded operator,
so some  more care will be needed here.


\subsection{Structure of the paper}


In Section \ref{sec:SC}, we introduce the stochastic convolution,
and discuss its rescaling to the slow time-scale. 
We can already identify all error terms that need to be handled in the following sections.
We do not follow the approach of Walsh~\cite{Wal86} but the one of Da~Prato \& Zabzcyck~\cite{DPZa:14}, using 
explicit series expansions for calculations.

Before getting to the main results, in Section \ref{sec:techlem} we present 
the key technical results for the stochastic convolution and the Gaussian initial conditions.  
We have an  error estimate in spatially weighted $C^0$ spaces
and its extension to estimates in  space and time.
The main assumptions are bounds on the Fourier kernel of the convolution operator, which are provided in the final three sections~\ref{sec:errspace} -- \ref{sec:errgauss}.

Section \ref{sec:main} provides the main results of the paper. First we establish the approximation result for the
for the Ornstein-Uhlenbeck process (stochastic convolution), then we provide the key steps for the full attractivity 
and approximation results for the linear stochastic equation.

Sections~\ref{sec:errspace} -- \ref{sec:errgauss}  provide, as already mentioned, the technical bounds on specific Fourier kernels 
that are necessary to apply the results of Section~\ref{sec:techlem} to the main results.
In Section~\ref{sec:errspace} we present the bounds necessary   
in order to derive estimates in space. 
 In Section~\ref{sec:errtime} we provide the  technical results necessary to obtain space-time estimates for the stochastic convolution.
 Finally, in Section~\ref{sec:errgauss} we show the technical estimates that are necessary to treat the Gaussian part in the initial conditions.

 \section{Stochastic convolution}
 \label{sec:SC}
 
 
 In this section we study the stochastic convolution introduced in~\eqref{e:SHmild}.
 First we introduce all spaces and definitions that we need for our analysis
 and then give the precise definition of the stochastic convolution. Finally we rescale it to the slow time-scale.
 This is necessary to identify the Wiener process driving the modulation equation, 
 and provides the ansatz which error terms need to be bounded.

 \subsection{Notation and Definition}
 
  
 In this part we present basic notation and definitions.
 We introduce all spaces used in the following and define a semigroup
 generated by our differential operator in terms of Fourier-multipliers.
 
 For some small $\gamma>0$ we define the norm
 \[
 \|u\|_{C_\gamma^0}= \sup_{x\in\R} \Big\{ (1+x^2)^{-\gamma/2}|u(x)| \Big\}
 \]
 and denote by $C_\gamma^0$ the space of all locally continuous functions $u:\R\to\R$,
 such that 
 $\|u\|_{C_\gamma^0}<\infty$. Analogously, we denote by $C_{\gamma,T}^0$ the space 
 of locally continuous functions 
 $u:[0,T]\times\R\to\R$ such that the following norm is finite
  \[
 \|u\|_{C_{\gamma,T}^0}= \sup_{s\in [0,T]} \sup_{x\in\R} \Big\{ (1+x^2)^{-\gamma/2}|u(s,x)|\Big\}.
 \]
 Furthermore, we define the space $\cC_{\gamma,T}^0$ by the norm
   \[
 \|u\|_{\cC_{\gamma,T}^0}= \sup_{s\in [0,T]} \sup_{L\in\N} \Big\{L^{-\gamma} \|u\|_{C^0([-L,L])} \Big\}.
 \]
We also use the time independent version  $\cC_{\gamma}^0$.
 \begin{lemma}
  The norms $ \|\cdot\|_{C_{\gamma,T}^0}$ and  $\|\cdot\|_{\cC_{\gamma,T}^0}$ are equivalent for all $\gamma>0$ and all $T>0$
  with constants depending only on $\gamma$.
 \end{lemma}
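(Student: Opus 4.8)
The plan is to strip away the time variable and reduce everything to a purely spatial comparison between the continuous weight $(1+x^2)^{-\gamma/2}$ and the discrete weight $L^{-\gamma}$ attached to the intervals $[-L,L]$. Since both norms have the shape $\sup_{s\in[0,T]}$ of a spatial quantity, it suffices to prove, for a single function $v:\R\to\R$, a two-sided estimate between
\[
N_1(v)=\sup_{x\in\R}(1+x^2)^{-\gamma/2}|v(x)|
\quad\text{and}\quad
N_2(v)=\sup_{L\in\N}L^{-\gamma}\sup_{|x|\le L}|v(x)|
\]
with constants depending only on $\gamma$. Applying this with $v=u(s,\cdot)$ and then taking the supremum over $s\in[0,T]$ transfers the equivalence to the time-dependent norms, so the time variable plays no genuine role and I would work with the time-independent versions.

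For the direction $N_2\le C\,N_1$ I would exploit that $(1+x^2)^{-\gamma/2}$ is decreasing in $|x|$, hence minimized on $[-L,L]$ at the endpoints. This gives $|v(x)|\le(1+L^2)^{\gamma/2}N_1(v)$ for all $|x|\le L$. Multiplying by $L^{-\gamma}$ and using the identity $L^{-\gamma}(1+L^2)^{\gamma/2}=(1+L^{-2})^{\gamma/2}\le 2^{\gamma/2}$, valid for every integer $L\ge1$, yields $N_2(v)\le 2^{\gamma/2}N_1(v)$ after taking the supremum over $L\in\N$.

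For the reverse bound $N_1\le C'\,N_2$ I would, given $x\in\R$, choose the integer $L=\max\{1,\lceil|x|\rceil\}\in\N$, so that $x\in[-L,L]$ and therefore $|v(x)|\le L^{\gamma}N_2(v)$. It then remains to control the factor $L^{\gamma}(1+x^2)^{-\gamma/2}$. Using $L\le|x|+1$ together with the elementary inequality $2|x|\le 1+x^2$ gives $(|x|+1)^2/(1+x^2)\le 2$, whence $L^{\gamma}(1+x^2)^{-\gamma/2}\le 2^{\gamma/2}$, and taking the supremum over $x$ produces $N_1(v)\le 2^{\gamma/2}N_2(v)$.

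The computations are entirely elementary, and in fact both directions collapse to the single estimate $2|x|\le 1+x^2$. The only point that needs a little care is matching the integer grid $\{[-L,L]:L\in\N\}$ to the continuum, specifically the boundary case of small $|x|$, where one must guarantee $L\ge1$ so that the factors $L^{-\gamma}$ and $L^{\gamma}$ behave as intended and the index set $\N$ is respected; this is precisely what forces the choice $L=\max\{1,\lceil|x|\rceil\}$ and the mildly lossy constant $2^{\gamma/2}$. I do not expect any real obstacle beyond keeping this bookkeeping clean.
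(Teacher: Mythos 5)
Your proposal is correct and follows essentially the same route as the paper's proof: reduce to the time-independent spatial norms, then compare the weights $(1+x^2)^{-\gamma/2}$ and $L^{-\gamma}$ on the integer interval containing $x$, with both directions resting on the elementary bound $1+L^2\leqslant 2L^2$ (equivalently $(|x|+1)^2\leqslant 2(1+x^2)$). The only difference is bookkeeping — your choice $L=\max\{1,\lceil|x|\rceil\}$ replaces the paper's case split into $x\in[-1,1]$ and $|x|\in[L,L+1]$, yielding the marginally better constant $2^{\gamma/2}$ in place of $2^{\gamma}$.
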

\begin{proof}
Let us remark that it is sufficient to verify the equivalence of   $ \|\cdot\|_{C_{\gamma}^0}$ and  $\|\cdot\|_{\cC_{\gamma}^0}$.
 First, for $x\in[-L,L]$, 
 \[
(1+x^2)^{-\gamma/2} \geqslant (1+L^2)^{-\gamma/2} \geqslant  2^{-\gamma/2} L^{-\gamma}
 \]
 and thus 
  \[
 \sup_{x\in[-L,L]} (1+x^2)^{-\gamma/2}|u(x)| \geqslant  2^{-\gamma/2}  L^{-\gamma} \|u\|_{C^0([-L,L])}
 \]
 which easily implies the first bound. For the other bound just note that 
$x\in[-1,1]$ implies 
 \[
 (1+x^2)^{-\gamma/2}|u(x)| \leqslant \|u\|_{C^0([-1,1])} \leqslant  \|u\|_{\cC_{\gamma}^0},
 \]
 and $|x|\in[L,L+1]$ implies
 \[
(1+x^2)^{-\gamma/2}|u(x)| \leqslant  L^{-\gamma} \|u\|_{C^0([-L-1,L+1])} \leqslant  2^\gamma  \|u\|_{\cC_{\gamma}^0}.
 \]
\end{proof}
Note that for $\gamma<\rho$ we obviously have the following continuous embeddings
\[
 C_{\rho,T}^0 \subset C_{\gamma,T}^0 
 \quad\text{and}\quad 
 C_{\rho}^0  \subset  C_{\gamma}^0.
\]
Thus a bound on $ C_{\gamma}^0$ for any small $\gamma$ already provides bounds for all larger $\gamma$.

 For small $\gamma>0$ these spaces are (up to a small $\varepsilon$-dependent constant)
 almost invariant under the rescaling $x\to \varepsilon x$.
The following result is straightforward to verify.
\begin{lemma}
 \label{lem:chsp}
For any $u\in{\cC_{\gamma,T}^0}$ one has
\[
c\varepsilon^\gamma \|u \|_{\cC_{\gamma,T}^0} 
\leqslant  \|u(\varepsilon \cdot) \|_{\cC_{\gamma,T}^0} 
\leqslant  C\|u\|_{\cC_{\gamma,T}^0}\;.
\]
 \end{lemma}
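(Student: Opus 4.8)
The plan is to reduce immediately to the time-independent norm $\cC_\gamma^0$, exactly as in the proof of the preceding lemma: the rescaling touches only the spatial variable, so the supremum over $s\in[0,T]$ passes through unchanged and it suffices to compare $\|u(\varepsilon\cdot)\|_{\cC_\gamma^0}$ with $\|u\|_{\cC_\gamma^0}$. Setting $v:=u(\varepsilon\cdot)$ and substituting $y=\varepsilon x$ on each symmetric interval, the elementary identity
\[
\|v\|_{C^0([-L,L])}=\sup_{|x|\leqslant L}|u(\varepsilon x)|=\|u\|_{C^0([-\varepsilon L,\varepsilon L])}
\]
turns both quantities into weighted suprema of the same family $\|u\|_{C^0([-r,r])}$, the only difference being that the weight $L^{-\gamma}$ is now attached to the radius $\varepsilon L$ instead of $L$. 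The whole lemma is therefore a bookkeeping statement about replacing the radii $\{L : L\in\N\}$ by $\{\varepsilon L : L\in\N\}$, and I would throughout assume $0<\varepsilon\leqslant1$.

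For the upper bound I would dominate each rescaled radius by the next integer: since $[-\varepsilon L,\varepsilon L]\subseteq[-M,M]$ with $M:=\lceil\varepsilon L\rceil\in\N$, one has $\|u\|_{C^0([-\varepsilon L,\varepsilon L])}\leqslant M^\gamma\bigl(M^{-\gamma}\|u\|_{C^0([-M,M])}\bigr)\leqslant M^\gamma\|u\|_{\cC_\gamma^0}$, whence $L^{-\gamma}\|v\|_{C^0([-L,L])}\leqslant (M/L)^\gamma\|u\|_{\cC_\gamma^0}$. The factor $(M/L)^\gamma\leqslant(\varepsilon+1/L)^\gamma\leqslant2^\gamma$ is bounded uniformly in $L\geqslant1$ (using $\varepsilon\leqslant1$), so taking the supremum gives $\|v\|_{\cC_\gamma^0}\leqslant2^\gamma\|u\|_{\cC_\gamma^0}$, i.e. $C=2^\gamma$. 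For the lower bound I would, conversely, choose on the $v$-side the scale that first covers a given integer scale on the $u$-side: for each $N\in\N$ put $L:=\lceil N/\varepsilon\rceil$, so that $\varepsilon L\geqslant N$ and hence $\|u\|_{C^0([-N,N])}\leqslant\|u\|_{C^0([-\varepsilon L,\varepsilon L])}=\|v\|_{C^0([-L,L])}\leqslant L^\gamma\|v\|_{\cC_\gamma^0}$. Dividing by $N^\gamma$ and estimating $(L/N)^\gamma\leqslant(1/\varepsilon+1/N)^\gamma\leqslant(2/\varepsilon)^\gamma$ yields $N^{-\gamma}\|u\|_{C^0([-N,N])}\leqslant2^\gamma\varepsilon^{-\gamma}\|v\|_{\cC_\gamma^0}$, and taking the supremum over $N$ gives $2^{-\gamma}\varepsilon^\gamma\|u\|_{\cC_\gamma^0}\leqslant\|v\|_{\cC_\gamma^0}$, i.e. $c=2^{-\gamma}$.

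The computation carries no analytic difficulty; the only point requiring care---and the source of the genuine $\varepsilon^\gamma$ loss rather than a mere constant---is the discretisation. Because the norm samples only integer radii, rescaling by $\varepsilon$ produces the non-integer radii $\varepsilon L$, and the rounding to $\lceil\varepsilon L\rceil$ (resp. $\lceil N/\varepsilon\rceil$) must be controlled; the additive $+1$ from the ceiling is harmless for large scales but dominates at $L=1$ (resp. small $N$), which is precisely what forces the constants $2^{\pm\gamma}$. The asymmetry between the two bounds is structural rather than an artefact: a function rescaled by $\varepsilon$ only ``sees'' the data of $u$ out to radius $\varepsilon L$, so resolving $u$ up to scale $N$ requires the much larger scale $L\sim N/\varepsilon$, and the weight $L^{-\gamma}$ then contributes the factor $\varepsilon^\gamma$.
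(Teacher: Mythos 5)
Your proof is correct, and it takes a mechanically different route from the paper's. The paper treats this lemma as a one-line substitution in the \emph{continuous} weighted norm: writing $\|u(\varepsilon\cdot)\|_{C^0_\gamma}=\sup_z\,(1+z^2\varepsilon^{-2})^{-\gamma/2}|u(z)|$ after the change of variables $z=\varepsilon x$, the two-sided bound follows instantly from the pointwise weight inequality $\varepsilon^\gamma(1+z^2)^{-\gamma/2}\leqslant(1+z^2\varepsilon^{-2})^{-\gamma/2}\leqslant(1+z^2)^{-\gamma/2}$, and the statement in $\cC^0_{\gamma,T}$ is then inherited via the preceding norm-equivalence lemma (with $\gamma$-dependent constants). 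You instead work directly in the integer-radius norm $\cC_\gamma^0$, matching scales by hand through the ceilings $M=\lceil\varepsilon L\rceil$ and $L=\lceil N/\varepsilon\rceil$; all your estimates check out ($M/L\leqslant\varepsilon+1/L\leqslant2$ and $L/N\leqslant1/\varepsilon+1/N\leqslant2/\varepsilon$ for $0<\varepsilon\leqslant1$, $L,N\geqslant1$), giving the explicit constants $C=2^\gamma$ and $c=2^{-\gamma}$, uniform in $\varepsilon$ as required, and the supremum over $s\in[0,T]$ indeed passes through untouched. What the paper's approach buys is brevity: the entire content is a single pointwise comparison of weights, with the discretisation hidden inside the already-proved equivalence of $\|\cdot\|_{C^0_\gamma}$ and $\|\cdot\|_{\cC^0_\gamma}$. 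What your approach buys is self-containedness and transparency: you never invoke the equivalence lemma, and your rounding analysis makes explicit where the factor $\varepsilon^\gamma$ (as opposed to a mere constant) is forced --- in the paper's version the same loss appears, equivalently, from $1+z^2\varepsilon^{-2}\leqslant\varepsilon^{-2}(1+z^2)$. Both arguments are elementary and yield constants depending only on $\gamma$, exactly as the lemma asserts.
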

The key observation is here that after the substitution $z=x\varepsilon$ one has 
 \[ 
 \varepsilon^\gamma (1+z^2)^{-\gamma/2} 
 \leq  (1+z^2\varepsilon^{-2})^{-\gamma/2} 
 \leq  (1+z^2)^{-\gamma/2} \;.
 \]
%
%
 \subsection{Local \texorpdfstring{$H^1$}{H1}-spaces}\label{sec:localH1}
 %
 %
We show that we can continuously embed $H^1_{\ell,u}$  (see below for the definition) into 
the weighted space
$C_{\rho}^0$ for any positive $\rho$, as it is already embedded into the space $C^0_b(\R)$ of 
continuous bounded functions which embeds into any $C^0_\rho$ with $\rho>0$: 
\begin{eqnarray*}
  \| u \|_{C_{\rho}^0} & = & \sup_{x\in\R} \Big\{ \frac{1}{(1 + x^2)^{\rho / 2}}  | u (x)|\Big\}\\
  \text{[equiv.]} & \leqslant & \sup_{\N \ni L > 0} \Big\{L^{- \gamma}  \| u \|_{C^0 ([- L, L])}\Big\}
  \leqslant  \sup_{L \in \N}  \| u \|_{C^0 ([- L, L])}
  =  \sup_{L \in \Z}  \| u \|_{C^0 ([L, L + 1])}\\
  \text{[Sobolev]}& \leqslant & C\cdot \sup_{L \in \Z}  \| u \|_{H^1 ([L, L + 1])}
 = C\cdot \| u \|_{H^1_{\ell, u}}  \quad \text{[by definition]} .
\end{eqnarray*}
 %
 %
  \subsection{Semigroups and Green's function}
 %
 %
 Here we recall well known facts about semigroups in terms of Green's functions and Fourier multipliers.
 Fix $G_t(x)$ to be the Green's function (fundamental solution) 
 associated to the differential operator 
 \[
 \mathcal{L}=-(1+\partial_x^2)^2.
 \]
 The semigroup $e^{t\cL}$ generated by $\cL$ is thus given as 
 \[
 e^{t\cL}f(x) =  G_t \ast f(x) = \int_{\R} G_t(x-y)f(y) \d y. 
 \]
We can write down $G$ explicitly.
Using the Fourier transform we immediately see that $G_t=\cF^{-1}g_t$ with $g_t(k)= e^{-t(1-k^2)^2}$,
and thus
\begin{align}
\label{e:Green}
G_t(x) 
&= \int_\R g_t(k)e^{ikx}dk  =\frac{1}{2\pi}\int_\R e^{-t(1+k)^2(1-k)^2}e^{ikx}\d k.
\end{align}
 
 This is similar for the operator $\cL_\varepsilon=-(1+\partial_x^2)^2+\nu\varepsilon^2$.
It has the fundamental solution:
\[ 
e^{t (\mathcal{L}+ \varepsilon^2 \nu)} f = G_{\varepsilon, t} \ast f, 
\]
and via Fourier transform:
\[ 
G_{\varepsilon, t} (x) = \frac{1}{2 \pi}  \int_{\R} e^{- t (1 - k^2)^2 + t \varepsilon^2 \nu} e^{ikx} \d  k, 
\]
with kernel  $g_{\varepsilon, t}(k) = e^{- t (1 -
   k^2)^2 + t \varepsilon^2 \nu} $.
 %
 %
\subsection{Properties of the semigroup}
%
%
The semigroup $e^{t\cL}$ generated by $\cL$ is a strongly continuous 
semigroup of linear operators~\cite{Pazy}. Here we rely mainly on the explicit representation
described in the section above. First we recall a bound on $e^{t \mathcal{L}}$ in $L^\infty$-topology
and extend it to the weighted spaces.

Using a result of~\cite[Lemma 2.1]{CoEc:02} we obtain that for all $\beta>0$ 
there is a constant $C>0$ such that  
\[
t^{1/4} |G_t(x)| e^{(\beta+2t^{-1/4})|x|}\leqslant C \quad \text{for all } t\in[0,1]
\]
and thus~\cite[Remark 2.2]{CoEc:02}
\begin{equation}
\label{e:GrBo}
\int_\R |G_t(x)| e^{\beta|x|} \d x \leqslant C\;.
\end{equation}
With these bounds and some less optimal bounds for $t\geqslant1$ from~\cite{MoBlKl:13},
we immediately obtain the following Lemma:
\begin{lemma}
\label{lem:bSG}
For $\gamma\in[0,1)$ there is a constant $C>0$ such that for all $t\geqslant0$ and all  $u\in C^0_\gamma$ 
\[
\|e^{t\cL_\varepsilon}u \|_{C^0_\gamma} \leqslant C \max\{1,t^{\gamma/2}\} \| u \|_{C^0_\gamma}\;.
\]
\end{lemma}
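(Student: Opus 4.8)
The plan is to reduce the operator bound to a single scalar estimate on the weighted $L^1$-mass of the convolution kernel. Since $\cL_\varepsilon=\cL+\nu\varepsilon^2$ we have $e^{t\cL_\varepsilon}=e^{t\nu\varepsilon^2}e^{t\cL}$, acting by convolution with $G_{\varepsilon,t}=e^{t\nu\varepsilon^2}G_t$. First I would use only the pointwise bound $|u(y)|\leqslant (1+y^2)^{\gamma/2}\|u\|_{C^0_\gamma}$ to write, for every $x\in\R$,
\[
(1+x^2)^{-\gamma/2}\bigl|(e^{t\cL_\varepsilon}u)(x)\bigr|
\leqslant e^{t\nu\varepsilon^2}\,\|u\|_{C^0_\gamma}\int_\R |G_t(x-y)|\,\frac{(1+y^2)^{\gamma/2}}{(1+x^2)^{\gamma/2}}\d y .
\]

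The first key step is to decouple the base point $x$ from the integration variable by a Peetre-type inequality: for $\gamma\geqslant0$ one has $(1+(x-z)^2)^{\gamma/2}\leqslant 2^{\gamma/2}(1+x^2)^{\gamma/2}(1+z^2)^{\gamma/2}$, which is immediate from $1+(x-z)^2\leqslant 2(1+x^2)(1+z^2)$. After the substitution $z=x-y$ this bounds the integrand ratio by $2^{\gamma/2}(1+z^2)^{\gamma/2}$ \emph{uniformly in} $x$, so that
\[
\|e^{t\cL_\varepsilon}u\|_{C^0_\gamma}
\leqslant 2^{\gamma/2}\,e^{t\nu\varepsilon^2}\Bigl(\int_\R |G_t(z)|\,(1+z^2)^{\gamma/2}\d z\Bigr)\|u\|_{C^0_\gamma}.
\]
It then remains to bound $I_\gamma(t):=\int_\R |G_t(z)|(1+z^2)^{\gamma/2}\d z$ by $C\max\{1,t^{\gamma/2}\}$; the prefactor $e^{t\nu\varepsilon^2}$ stays of order $\cO(1)$ on the relevant time scale $t=\cO(\varepsilon^{-2})$ and is absorbed into $C$.

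For small time $t\in[0,1]$ I would dominate the polynomial weight by an exponential, $(1+z^2)^{\gamma/2}\leqslant C_\beta e^{\beta|z|}$, and invoke the exponential-moment bound~\eqref{e:GrBo}, which gives $I_\gamma(t)\leqslant C$ and hence the $\max\{1,t^{\gamma/2}\}=1$ regime. The hard part will be the large-time regime $t\geqslant1$, where the short-time kernel estimate of~\cite{CoEc:02} is useless and one must instead feed in the cruder large-time bounds on $G_t$ from~\cite{MoBlKl:13}. Heuristically the multiplier $e^{-t(1-k^2)^2}$ concentrates near $k=\pm1$ like a Gaussian of width $t^{-1/2}$, so $G_t$ spreads on the spatial scale $t^{1/2}$; rescaling $z=t^{1/2}w$ then produces the factor $t^{\gamma/2}$, while the restriction $\gamma<1$ ensures that the resulting integral $\int e^{-cw^2}|w|^\gamma\d w$ converges and the growing weight does not defeat the Gaussian decay. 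Turning this scaling heuristic into a rigorous bound $I_\gamma(t)\leqslant Ct^{\gamma/2}$ from the available estimates is the only genuinely delicate point; combining it with the small-time case yields the claim.
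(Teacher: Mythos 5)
Your reduction coincides with the paper's own: the Peetre-type bound $1+(x-z)^2\leqslant 2(1+x^2)(1+z^2)$ decouples the base point, the lemma reduces to $I_\gamma(t)=\int_\R|G_t(z)|(1+z^2)^{\gamma/2}\d z\leqslant C\max\{1,t^{\gamma/2}\}$, and for $t\in[0,1]$ the exponential-moment bound~\eqref{e:GrBo} of Collet--Eckmann settles matters exactly as you describe. Your handling of the prefactor $e^{t\nu\varepsilon^2}$ is also consistent with the paper, which silently drops this factor and elsewhere acknowledges that for $\nu>0$ constants depend on the time horizon.

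The genuine gap is precisely where you defer: the regime $t\geqslant1$ is left as a heuristic, and the heuristic as stated misidentifies the mechanism. The multiplier $e^{-t(1-k^2)^2}$ does concentrate near $k=\pm1$ on the scale $t^{-1/2}$, but the rescaled spatial profile is \emph{not} Gaussian; $G_t$ oscillates, and the only envelope information available from~\cite{MoBlKl:13} is the self-similar representation $G_t(z)=t^{-1/2}g_{t^{-1/2}}(t^{-1/2}z)$ together with $\sup_{\xi\in\R}\sup_{\tau\in(0,1)}|g_\tau(\xi)|(4+\xi^2)\leqslant C$, i.e.\ merely \emph{quadratic} decay of the profile. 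Feeding this in, the paper concludes in two lines:
\[
I_\gamma(t)\leqslant C\int_\R (1+z^2)^{\gamma/2}\,t^{-1/2}\bigl(4+z^2/t\bigr)^{-1}\d z
= C\int_\R\frac{(1+t\xi^2)^{\gamma/2}}{4+\xi^2}\d\xi\leqslant C\,t^{\gamma/2},
\]
and it is here --- not in any Gaussian integral --- that the restriction $\gamma<1$ enters: the integrand decays like $|\xi|^{\gamma-2}$ at infinity, integrable precisely when $\gamma<1$. Your stated reason, that $\gamma<1$ ensures convergence of $\int e^{-cw^2}|w|^\gamma\d w$, is incorrect: that integral converges for every $\gamma\geqslant0$, so if the profile really were Gaussian no restriction on $\gamma$ would be needed at all. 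Indeed the paper remarks after the proof that the restriction appears to be an artifact of the quadratic-decay bound (one could use higher powers of $(4+z^2)$). So the missing step is a short explicit computation, but closing it requires the polynomial-envelope self-similar bound, which your Gaussian picture does not supply and would even suggest is unnecessary.
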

\begin{proof}
First it is easy to see that we need to bound 
\[
\sup_{x\in\R} (1+x^2)^{-\gamma/2} \int_\R |G_t(x-y)| (1+y^2)^{\gamma/2} \d y 
= \sup_{x\in\R}  \int_\R \Big(  \frac{1+(x-z)^2}{1+x^2} \Big)^{\gamma/2}  |G_t(z)|  \d z\;.
\]
For $t\in[0,1]$ the result now follows from~\eqref{e:GrBo} and
\[
  \frac{1+(x-z)^2}{1+x^2} \leqslant 2 (1+z^2)\;.
\]
For $t>1$ we use that from~\cite{MoBlKl:13} 
\[
G_t(z)=t^{-1/2} g_{t^{-1/2}}(t^{-1/2}z)
\quad \text{with} \quad
\sup_{\xi\in\R} \sup_{\tau\in(0,1)} |g_\tau(\xi)| (4+\xi^2) \leqslant C\;.
\]
Thus 
\[
\begin{split}
 \int_{\R}(1+z^2)^{\gamma/2} |G_t(z)| \d z &\leqslant C  \int_{\R}(1+z^2)^{\gamma/2} t^{-1/2} (4+z^2/t)^{-1} \d z\\
 &\leqslant C \int_{\R} \frac{(1+t\xi^2)^{\gamma/2}}{4+\xi^2}   \d \xi \leqslant C t^{\gamma/2}.
\end{split}
\]
 \end{proof}
Let us remark without proof that the restriction to $\gamma \in(0,1)$ in the last step of the proof above
does not seem to be necessary, 
as we could use any power of $(4+z^2)$. 
%
%
%
 \subsection{Definition of cylindrical Wiener process}
 %
 %
 Let's fix for the whole paper an abstract probability space $(\Omega,\cA,\P)$ 
 on which all stochastic processes are defined.
 Following~\cite{DPZa:14}  we define:
 
 \begin{defi}
 \label{def:WP}
 A \emph{standard cylindrical Wiener process} $W(t)$ 
 is given by any orthonormal basis $\{e_\ell\}_{\ell\in\N}$ of $L^2(\R,\R)$
 and any family of (real valued) i.i.d. standard Brownian motions $\{B_\ell\}_{\ell\in\N}$
 such that
 \begin{equation}
 \label{e:WP}
   W(t)= \sum_{\ell\in\N} B_\ell(t) e_\ell\;.
 \end{equation}
 \end{defi}
Obviously, a cylindrical Wiener process is not an $L^2(\R)$-valued random variable.
It is just defined in a larger space. For details see~\cite{DPZa:14}.
Moreover, it is characterized by being a Gaussian process such that 
for all $u,v\in L^2(\R,\R)$ and all $t,s\geqslant 0$
\[
\E\langle W(t),u \rangle =0 
\quad\text{and}\quad 
\E \Big\{\langle W(t),u \rangle \langle W(s),v \rangle \Big\} 
=\min\{t,s\}\langle u,v \rangle \;.
\]
A sometimes confusing fact is that  for every fixed $t$ the process
$\{W(t,x)\}_{x\in\R}$ is a stationary and thus translation invariant process,
but the  basis functions in which we expand are not at all 
translation invariant and might decay fast at infinity like Hermite functions.

We also need the notion of a standard $\C$-valued cylindrical process,
which is not as standard:
 \begin{defi}
 A \emph{complex-valued standard cylindrical Wiener process} $\mathcal{W}(t)$ 
 is given by any orthonormal basis $\{e_\ell\}_{\ell\in\N}$ of $L^2(\R,\C)$
 and any family of $\C$-valued i.i.d. standard Brownian motions $\{\beta_\ell\}_{\ell\in\N}$
 such that
 \begin{equation}
 \label{e:WPc}
  \mathcal{W}(t)= \sum_{\ell\in\N} \beta_\ell(t) e_\ell\;.
 \end{equation}
 \end{defi}
Note that for a $\C$-valued standard Brownian motion $\beta_\ell$ one has
\[
\E \beta_\ell(t)^2 =0 \quad \text{and} \quad \E |\beta_\ell(t)|^2 = t\;. 
\]
\begin{es}
 If we take $2$ independent copies $W^{(j)}$, $j=1,2$ 
 of real valued standard cylindrical Wiener processes
 in the sense of Definition \ref{def:WP},
 then 
 \[
 \mathcal{W}(t):= \frac1{\sqrt{2}} \Big[W^{(1)}(t) + i W^{(2)}(t)\Big]
 \]
 is a complex valued standard cylindrical Wiener process with same 
 orthonormal basis $\{e_\ell\}_{\ell\in\N}$ of both  $L^2(\R,\C)$ and $L^2(\R,\R)$
 and  with Brownian motion 
 \[
 \beta_\ell = \frac{[B^{(1)} + i B^{(2)}]}{\sqrt{2}}.
 \]
\end{es}

We will model space-time white noise always as the derivative of 
a standard cylindrical Wiener process. 
There is also the equivalent approach of Walsh \cite{Wal86}
 by looking at the derivative of a Brownian sheet. 
But Dalang \& Quer-Sardanyons \cite{DaQS11} and also \cite{FI15} showed that both 
formulations yield, up to taking the right versions of the processes, the same integrals.
\begin{remark}[Coloured Noise]
As already mentioned previously, we only consider space-time white noise here in this paper. 
If we would consider  a $Q$-Wiener process $W$, for example
with $Q$ given by the convolution against some function $q$,
then we obtain for the corresponding noise $\xi=\partial_t W$ 
that it is a generalized centred Gaussian process with correlation
\[
\E \xi(t,x)\xi(s,y)=\delta(t-s)q(x-y).
\]
So we are still in the case of homogeneous noise which is translation invariant. 
Referring to~\cite{Blo05} the stochastic convolution is more regular in space, 
but we expect it to be still unbounded for every fixed $t>0$.

In~\cite{BlHaPa07} in the case of large, but bounded, domains the case of spatially smoother coloured noise is also treated,
but the amplitude equation still displays space-time white noise. 
In order to avoid further technicalities in rescaling coloured noise, 
we do not address this issue in this paper.
\end{remark}
%
 %
 \subsection{Stochastic Integrals}
 %
 %
%
For a deterministic Hilbert-Schmidt-operator valued function 
$\cH \in L^2([S,T],\HS(L^2(\R)))$, one can define the stochastic integral
with respect to the real or complex cylindrical Wiener process $W$, which we can also expand in the basis $e_\ell$
that were  used to define the Wiener process in~\eqref{e:WP}) or~\eqref{e:WPc}: 
\[
\int_S^T \cH(s)dW(s)= \sum_{\ell\in\N} \int_S^T \cH(s) e_\ell \d\beta_\ell(s).
\]
By It\^o-isometry we have 
\[
\E\Bigg\|\int_S^T \cH(s)\d W(s)\Bigg\|^2_{L^2(\R)}= \sum_{\ell\in\N}  \int_S^T\| \cH(s) e_\ell \|^2_{L^2(\R)} \d s = \|\cH\|^2_{L^2([S,T],\HS(L^2(\R))}
\]
 %
 \subsection{Stochastic convolution}
 For a cylindrical Wiener process $W$ defined in~\eqref{e:WP}
 and the semigroup generated by $\cL$
 we can define the stochastic convolution
 \begin{equation}
 \label{e:sc}
 \begin{split}
 W_\mathcal{L}(t,\cdot)
 &\coloneqq\int_0^t  e^{(t-s)\cL}\d W(s)
 = \sum_{\ell\in \N} \int_0^t e^{(t-s)\cL}e_\ell( \cdot)\d\beta_\ell(s)\\
 &= \sum_{\ell \in \N} \int_0^t \int_\R G_{t-s}(\cdot -y)e_\ell(y)\d y\,\d\beta_\ell(s).
 \end{split}
  \end{equation}
 This is by definition  (see~\cite{DPZa:14}) the mild solution to the linear problem
 \[
 \d u =\cL u \d t +\d W, \qquad u(0)=0.
 \]
 In this section we focus on $W_\cL(t,x)$, but the results are true for all other 
 stochastic convolutions considered in this paper, 
 like for example $\mathcal{W}_{4\partial_X^2}(T,X)$ on the slow time-scale, with a complex Wiener process $\mathcal{W}$.
 
 The following Lemma is well known  in the setting of Walsh and easy to verify here. 
 We give a brief sketch of a proof for completeness of presentation.
\begin{lemma}
\label{lem:well}
 The stochastic convolution $ W_\mathcal{L}(t,x)$
 in~\eqref{e:sc} is for all $t\geqslant0$ and $x\in\R$ a well 
defined real-valued Gaussian  random variable with mean $0$ and variance $\int_0^t \| g_{s}\|^2 ds$.  
\end{lemma}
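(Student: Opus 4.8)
The plan is to realise $W_\cL(t,x)$ as an $L^2(\Omega)$-convergent series of independent one-dimensional Wiener integrals, each a centred real Gaussian, and then to read off the variance of the limit. First I would fix $t\geqslant 0$ and $x\in\R$ and rewrite the inner spatial integral as an inner product: since $G$ is real-valued,
\[
\int_\R G_{t-s}(x-y)e_\ell(y)\d y = \langle G_{t-s}(x-\cdot),e_\ell\rangle_{L^2(\R)} =: h_\ell(s)
\]
is a deterministic real number, and $s\mapsto h_\ell(s)$ lies in $L^2([0,t])$. Hence each summand $\int_0^t h_\ell(s)\,\d\beta_\ell(s)$ is an ordinary Wiener integral of a deterministic integrand, and therefore a centred real Gaussian random variable. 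Because the $\beta_\ell$ are independent, the partial sums $\sum_{\ell\leqslant N}\int_0^t h_\ell\,\d\beta_\ell$ are centred Gaussian random variables, so mean zero and real-valuedness are immediate for the partial sums and will pass to the limit.

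Second, I would establish $L^2(\Omega)$-convergence, which simultaneously produces the claimed variance. By the It\^o isometry and independence of the $\beta_\ell$,
\[
\E\Big| \sum_{\ell\leqslant N}\int_0^t h_\ell\,\d\beta_\ell \Big|^2 = \sum_{\ell\leqslant N}\int_0^t |h_\ell(s)|^2\,\d s .
\]
Letting $N\to\infty$, Tonelli's theorem (all terms non-negative) permits exchanging the sum and the $s$-integral, and Parseval's identity for the orthonormal basis $\{e_\ell\}$ of $L^2(\R)$ then gives
\[
\sum_{\ell\in\N}|h_\ell(s)|^2 = \|G_{t-s}(x-\cdot)\|_{L^2(\R)}^2 = \|G_{t-s}\|_{L^2(\R)}^2 ,
\]
the last step by translation invariance of Lebesgue measure. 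Plancherel's theorem converts this into $\|g_{t-s}\|^2$, and the substitution $r=t-s$ yields the stated variance $\int_0^t \|g_r\|^2\,\d r$.

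Third, I would verify finiteness of this variance, which is the main obstacle, since the integrand is singular as $r\to 0^+$. Here $\|g_r\|^2 = \int_\R e^{-2r(1-k^2)^2}\,\d k$, and the delicate region is near $k=\pm 1$, where the exponent degenerates; the local expansion $(1-k^2)^2\approx 4(k\mp 1)^2$ shows $\|g_r\|^2=\cO(r^{-1/2})$ as $r\to 0^+$, which is integrable on $[0,t]$, while for $r$ bounded away from $0$ the integrand is continuous and bounded. Thus $\int_0^t\|g_r\|^2\,\d r<\infty$, and the Cauchy criterion gives convergence of the partial sums in $L^2(\Omega)$. The limit is therefore a well-defined random variable; being an $L^2(\Omega)$-limit of centred real Gaussians it is itself a centred real Gaussian, and its variance equals $\int_0^t\|g_r\|^2\,\d r$ by the computation above, completing the proof.
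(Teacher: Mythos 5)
Your proposal is correct and follows essentially the same route as the paper's proof: It\^o isometry plus independence to reduce to second moments, Parseval over the orthonormal basis, translation invariance, Plancherel, and closedness of Gaussians under mean-square limits. The only difference is that you carry out the finiteness check of $\int_0^t\|g_r\|^2\,\mathrm{d}r$ explicitly (the $\mathcal{O}(r^{-1/2})$ singularity from the degeneracy at $k=\pm1$), which the paper dismisses as a straightforward calculation.
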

\begin{proof}
 It is easy to check that the series in~\eqref{e:sc} is 
 for all $(t,x)\in[0,T]\times\R$ a Cauchy-sequence in $L^2(\Omega,\R)$ with
\begin{align*}
 \E | W_\mathcal{L}(t,x) |^2 
 &=  \sum_{\ell \in \N}  \int_0^t \Big|\int_\R G_{t-s}(x -y)e_\ell(y)\d y \Big|^2 \d s \\
 \textrm{[Parceval]}\quad& =  \int_0^t \| G_{t-s}(x - \cdot) \|^2 \d s = \int_0^t \| G_{s}\|^2 \d s \\
\textrm{[Plancherel]}\quad & =  \int_0^t \| g_{s}\|^2 \d s <\infty,
\end{align*}
where a straightforward calculation shows that the last term is finite.
Gaussianity follows from the fact that Gaussians are closed under mean-square convergence.
\end{proof}

\begin{remark}
Lemma \ref{lem:well} immediately implies that for all $T>0$ and $L>0$  
\[
 W_\mathcal{L} \in L^p(\Omega\times[0,T]\times[-L,L]) \;.
\]
With the methods of section \ref{sec:techlem} we will see that 
$W_\mathcal{L} \in L^p(\Omega, C^0([0,T]\times[-L,L]))$ 
with norm growing in $L$ 
slower than  any small power of $L$.
\end{remark}

The term $W_\cL$ behaves like the square root of logarithm at $x\to\infty$, 
a property we only state as a remark without proof. 
 \begin{remark}\label{rem:conjlog} For all $t>0$
   \[
    \lim_{L\to \infty} \frac1{\sqrt{\log(L)}}\sup_{x\in[-L,L]}|W_\mathcal{L}(t,x)| \in (0,\infty) \;. 
   \]
 \end{remark}

This result has been obtained for Gaussian processes by Qualls and Watanabe~\cite{quawat71}.
The key idea required in proving this remark is just the fact that for fixed $t>0$ we
are considering a Gaussian field with constant variance (see Lemma~\ref{lem:well}).
To conclude the proof one needs to compute and analyse the covariance function, which is a
 straightforward but quite technical task.

\subsection{Rescaling}
 

 In this section we rescale the stochastic convolution $W_\cL$
 to the slow time- and space-scale
 \[
 X=\varepsilon x,\quad  T=\varepsilon^2 t.
 \] 
 We want to focus on the
 slow time scale and large spatial scale, forgetting about the local phenomena
induced by fast oscillations and focusing only on the slow modulation of these fast pattern.  

Before we start, we  rescale the Wiener process first, and show that 
such rescaled Wiener process is at least in law still the same cylindrical Wiener process.
Note that $\stackrel{\mathrm{(d)}}{=}$ denotes equality in law. 
 \begin{lemma} For a real or complex valued standard cylindrical Wiener process $W$ we have 
 \[
 W(t,x)\stackrel{\mathrm{(d)}}{=} W^{(\varepsilon)}(T,X)\stackrel{\mathrm{def}}{=} \sum_{\ell \in\N} \beta_\ell^{(\varepsilon)}(T)e_\ell^{(\varepsilon)}(X),
 \]
 where  $\{e_\ell\}_{\ell\in\N}$ is an  orthonormal basis in $L^2(\R)$,
 $\{\beta_\ell\}_{\ell\in\N}$ a family of (real or complex) i.i.d. standard Brownian motions 
 and we defined for some $c\in\R$:
 \begin{equation*}
 e_\ell^{(\varepsilon)}(X)=\varepsilon^{-1/2} e^{icx} e_\ell(X\varepsilon^{-1})\;, \qquad
 \beta_\ell^{(\varepsilon)}(T)= \varepsilon\beta_\ell(T\varepsilon^{-2})\;.
 \end{equation*}
 \end{lemma}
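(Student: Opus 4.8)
The plan is to recognize the right-hand side $W^{(\varepsilon)}(T,X)=\sum_{\ell\in\N}\beta_\ell^{(\varepsilon)}(T)e_\ell^{(\varepsilon)}(X)$ as itself a standard cylindrical Wiener process in the sense of Definition~\ref{def:WP}, and then to invoke the covariance characterization recalled just after that definition, which determines the law of such a process uniquely. Since that characterization depends only on the structure $\E\langle W(T),u\rangle=0$ and $\E\{\langle W(T),u\rangle\langle W(S),v\rangle\}=\min\{T,S\}\langle u,v\rangle$, \emph{any} process built from \emph{some} orthonormal basis together with \emph{some} family of i.i.d.\ standard Brownian motions is automatically equal in law to $W$. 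The whole argument therefore reduces to two independent checks: that $\{e_\ell^{(\varepsilon)}\}_{\ell\in\N}$ is again an orthonormal basis of $L^2(\R)$, and that $\{\beta_\ell^{(\varepsilon)}\}_{\ell\in\N}$ is again a family of i.i.d.\ standard Brownian motions.

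First I would handle the spatial basis. The key observation is that the map $U$ sending $f$ to $(Uf)(X)=\varepsilon^{-1/2}e^{icX\varepsilon^{-1}}f(X\varepsilon^{-1})$ is unitary on $L^2(\R)$: it is the composition of the $L^2$-normalised dilation $f\mapsto\varepsilon^{-1/2}f(\,\cdot\,\varepsilon^{-1})$, which is unitary by the substitution $x=X\varepsilon^{-1}$, $\d X=\varepsilon\,\d x$ (the factor $\varepsilon^{-1}$ coming from $\varepsilon^{-1/2}$ squared cancels the Jacobian $\varepsilon$), with multiplication by the unimodular phase $e^{icX\varepsilon^{-1}}$, which is unitary since $|e^{icx}|=1$. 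Because $e_\ell^{(\varepsilon)}=Ue_\ell$ and unitaries send orthonormal bases to orthonormal bases, $\{e_\ell^{(\varepsilon)}\}_{\ell\in\N}$ is again an orthonormal basis. In the real-valued case one simply takes $c=0$, so that the phase disappears and the functions $e_\ell^{(\varepsilon)}$ remain real.

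Next I would treat the time rescaling, which is nothing but Brownian scaling. For a single standard (real or complex) Brownian motion $\beta_\ell$ and $\varepsilon>0$, the process $T\mapsto\varepsilon\beta_\ell(T\varepsilon^{-2})$ is a continuous centred Gaussian process with covariance
\[
\E\big[\varepsilon\beta_\ell(T\varepsilon^{-2})\,\varepsilon\beta_\ell(S\varepsilon^{-2})\big]
=\varepsilon^{2}\min\{T\varepsilon^{-2},S\varepsilon^{-2}\}=\min\{T,S\},
\]
hence again a standard Brownian motion; in the complex case the relation $\E\beta_\ell^{(\varepsilon)}(T)^2=0$ is preserved by the same scaling. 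Independence across $\ell$ is immediate, since each $\beta_\ell^{(\varepsilon)}$ is a deterministic functional of the single motion $\beta_\ell$, and the $\beta_\ell$ are independent. Combining the two checks, a short computation using Parseval's identity for the basis $\{e_\ell^{(\varepsilon)}\}$ shows that $W^{(\varepsilon)}$ satisfies $\E\{\langle W^{(\varepsilon)}(T),u\rangle\langle W^{(\varepsilon)}(S),v\rangle\}=\min\{T,S\}\langle u,v\rangle$, so the characterization yields $W\stackrel{\mathrm{(d)}}{=}W^{(\varepsilon)}$.

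I do not expect a genuine obstacle here; the only points requiring care are bookkeeping. One must read the claimed identity at the level of the full space–time field (equality of processes), not merely for a fixed pair $(T,X)$, but this is exactly what the covariance characterization delivers once the two ingredients are in place. The other delicate point is the interplay of the fast variable $x=X\varepsilon^{-1}$ and the slow variable $X$ inside the phase $e^{icx}$, which I would resolve by writing everything consistently in the slow variable, as done for $U$ above.
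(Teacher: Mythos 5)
Your proposal is correct and follows essentially the same route as the paper's own proof: verify that $\{e_\ell^{(\varepsilon)}\}$ is again an orthonormal basis (you make this explicit via a unitary dilation composed with a unimodular multiplier), verify that $\{\beta_\ell^{(\varepsilon)}\}$ are again i.i.d.\ standard Brownian motions by Brownian scaling, and conclude by the definition/covariance characterization of a standard cylindrical Wiener process. Your version merely supplies details the paper leaves to the reader (the unitarity computation, the $c=0$ convention in the real case, and the explicit Parseval check of the covariance).
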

\begin{proof}
Check that $e_\ell^{(\varepsilon)}$ is an orthonormal basis of $L^2(\R)$. 
By the scaling properties of Brownian motion since $\beta_\ell(t)$ is a family of i.i.d. Brownian motions,  $\alpha^{-1}\beta_\ell(\alpha^{2}t)$ is one, too.
Thus by definition $W^{(\varepsilon)}$ is a standard cylindrical Wiener process.
\end{proof} 
 The full rescaling of the Wiener process is stated in the following Lemma.
 Here we do not start by  $W_\cL$, but consider the final rescaled  result, 
 and assume that is independent of $\varepsilon$, 
 then we construct a proper $\varepsilon$-dependent Wiener process for  $W_\cL$,
 such that the rescaling is true.
 \begin{lemma}[Rescaling Lemma]
 \label{lem:res}
  Fix    an  orthonormal basis  $\{e_\ell\}_{\ell\in\N}$ of $L^2(\R,\C)$
 and $\{\beta_\ell\}_{\ell\in\N}$ a family of $\C$-valued i.i.d. standard Brownian motions.
 Then there exists an $\varepsilon$-dependent standard real-valued cylindrical 
 Wiener process $W^{(\varepsilon)}$ such that
 \begin{align*}
 \lefteqn{\varepsilon^{1/2}W^{(\varepsilon)}_\mathcal{L}(T\varepsilon^{-2},X\varepsilon^{-1})=}\\
 & =\frac{1}{2\pi} \sum_{\ell \in \N} \int_0^T\int_\R\int_{-1/\varepsilon}^\infty e^{-(T-S)(2+k\varepsilon)^2k^2}e^{ik(X-Y)}\d k
  e_\ell(Y)\ \d Y\d \beta_\ell(S) e^{iX\varepsilon^{-1}}+ \mathrm{c.c.}
 \nonumber
 \end{align*}
 \end{lemma}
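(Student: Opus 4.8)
The plan is to reverse the construction: rather than rescaling a fixed $W_\cL$, I fix the data $\{e_\ell\}$ and $\{\beta_\ell\}$ appearing on the right-hand side and build an $\varepsilon$-dependent standard real cylindrical Wiener process $W^{(\varepsilon)}=\sum_\ell B_\ell\, g_\ell^{(\varepsilon)}$ whose convolution, after rescaling, reproduces the claimed identity. First I would write $W^{(\varepsilon)}_\cL$ through its series \eqref{e:sc}, insert the explicit Green's function \eqref{e:Green}, and substitute $t=T\varepsilon^{-2}$, $x=X\varepsilon^{-1}$ and $s=S\varepsilon^{-2}$, so that the heat-type factor becomes $\exp\{-(T-S)\varepsilon^{-2}(1-K^2)^2\}$ while the increment $\d B_\ell$ still has to be rescaled in time.

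The heart of the computation is the change of variables in the Fourier integral that isolates the band of modes around the critical wave numbers $k=\pm1$. I would split $\int_\R\d K$ at $K=0$; on $(0,\infty)$ I substitute $K=1+k\varepsilon$ and on $(-\infty,0)$ I substitute $K=-1-k\varepsilon$. Since $1-(1+k\varepsilon)^2=-k\varepsilon(2+k\varepsilon)$, both halves produce the same factor $(1-K^2)^2=\varepsilon^2k^2(2+k\varepsilon)^2$, which combines with the $\varepsilon^{-2}$ from the time rescaling to give exactly the exponent $-(T-S)(2+k\varepsilon)^2k^2$; the range $K\in(0,\infty)$ becomes $k\in(-1/\varepsilon,\infty)$, explaining the integration limit. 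The phase $e^{iK(x-y)}$ factors as $e^{\pm iX\varepsilon^{-1}}e^{\pm ikX}e^{\mp i(1+k\varepsilon)y}$, pulling out the dominant pattern $e^{\pm iX\varepsilon^{-1}}=e^{\pm ix}$. Because $G_t$ is real and even in $K$, the $(-\infty,0)$ half is the exact complex conjugate of the $(0,\infty)$ half, which is the source of the $+\,\mathrm{c.c.}$ term.

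It remains to match the spatial and temporal factors. For the inner integral I would take the rescaled, $e^{iy}$-modulated basis (the choice $c=1$ in the previous Lemma), so that the substitution $Y=\varepsilon y$ turns $\int_\R e^{-i(1+k\varepsilon)y}g_\ell^{(\varepsilon)}(y)\,\d y$ into $\varepsilon^{-1/2}\int_\R e^{-ikY}e_\ell(Y)\,\d Y$, i.e. into the Fourier pairing with $e_\ell$ appearing on the right-hand side; this is precisely the step that demodulates the fast oscillation $e^{-iy}$ carried by the Green's function against the modulation built into $g_\ell^{(\varepsilon)}$. For the time integral I use Brownian scaling (as in the previous Lemma and the Example) to rewrite $\int_0^{T\varepsilon^{-2}}h(\varepsilon^2 s)\,\d B_\ell(s)=\varepsilon^{-1}\int_0^T h(S)\,\d\beta_\ell(S)$. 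Collecting the powers of $\varepsilon$ — the prefactor $\varepsilon^{1/2}$, the Jacobian $\varepsilon$ from $\d K=\varepsilon\,\d k$, the net $\varepsilon^{-1/2}$ from $\d y=\varepsilon^{-1}\d Y$ together with the normalisation of the basis, and the $\varepsilon^{-1}$ from the Brownian time change — one checks they cancel, leaving exactly the $\varepsilon$-independent prefactor $\tfrac1{2\pi}$.

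The main obstacle is the real-versus-complex bookkeeping. The convolution $W^{(\varepsilon)}_\cL$ must be driven by a genuine real standard cylindrical Wiener process, so that it is the mild solution of the real equation \eqref{e:SHmild}, yet the right-hand side is written with a complex orthonormal basis $\{e_\ell\}$, complex Brownian motions $\{\beta_\ell\}$ and an explicit complex conjugate. Reconciling these is exactly what the splitting into the two conjugate Fourier halves achieves: demodulating the real basis by $e^{\mp iy}$ and projecting onto positive and negative frequencies turns the real data into the complex pair $(e_\ell,\beta_\ell)$ and its conjugate, in the manner of the Example relating two independent real cylindrical processes to a single complex one. I would therefore (i) define $g_\ell^{(\varepsilon)}$ and the real driving Brownian motions explicitly from the given $e_\ell,\beta_\ell$, (ii) verify by a rescaling argument of the type in Lemma \ref{lem:chsp} that $\{g_\ell^{(\varepsilon)}\}$ is an orthonormal basis of $L^2(\R)$ and that the $B_\ell$ are standard, so that $W^{(\varepsilon)}$ is indeed a standard cylindrical Wiener process, and (iii) invoke Lemma \ref{lem:well} to guarantee that every stochastic integral in sight converges in $L^2(\Omega)$, so that the interchange of summation, the It\^o integral and the change of variables is justified term by term. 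The remaining manipulations are routine.
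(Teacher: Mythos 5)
Your computational skeleton is exactly the paper's, run from left to right instead of right to left: the substitution $K=1+k\varepsilon$ (resp.\ $K=-1-k\varepsilon$) on the two half-lines, the identity $(1-K^2)^2=\varepsilon^2k^2(2+k\varepsilon)^2$ absorbing the $\varepsilon^{-2}$ from the time change, the integration limit $k>-1/\varepsilon$, Brownian scaling, and the bookkeeping of powers of $\varepsilon$ all match the paper's proof. The genuine gap is in your steps (i)--(ii). You cannot realize $W^{(\varepsilon)}$ in the canonical form $\sum_\ell B_\ell\,g_\ell^{(\varepsilon)}$ with $\{g_\ell^{(\varepsilon)}\}$ an orthonormal basis of $L^2(\R,\R)$ built explicitly from the prescribed complex data and then ``verify orthonormality by rescaling''. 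The right-hand side contains only the frequencies $K>0$ (i.e.\ $k>-1/\varepsilon$), so the functions that actually occur are $P^+f_\ell$ with $f_\ell(y)=\varepsilon^{1/2}e^{iy}e_\ell(\varepsilon y)$, where $P^+$ is the projection onto positive wave numbers. While $\{f_\ell\}$ is indeed an orthonormal basis (modulation and $L^2$-rescaling preserve this; that part of your argument is fine), $\{P^+f_\ell\}$ is not: a generic $e_\ell$ has Fourier mass on both sides of the cutoff, so $\langle P^+f_\ell,P^+f_m\rangle\neq\delta_{\ell m}$, and the same failure passes to the real and imaginary parts $\sqrt{2}\,\mathrm{Re}\,P^+f_\ell$, $-\sqrt{2}\,\mathrm{Im}\,P^+f_\ell$ that your real representation would use. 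Relatedly, your ``conjugate halves'' mechanism pairs the negative-frequency half with the conjugate data only if the driving process is real \emph{in canonical form}, which is exactly what cannot be arranged for an arbitrary prescribed complex basis $\{e_\ell\}$ with i.i.d.\ $\{\beta_\ell\}$ --- note the lemma quantifies over given complex data and asserts existence of the real process, so the construction must go from complex to real, not the reverse.

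What the paper does instead --- and what your outline is missing --- is to define $W^{(\varepsilon)}:=\sum_\ell P^+f_\ell\,\tilde\beta_\ell+\mathrm{c.c.}$ directly (with $\tilde\beta_\ell$ the time-rescaled $\beta_\ell$) and to prove that this centred Gaussian process has identity covariance, i.e.
\begin{equation*}
\E\,\langle W^{(\varepsilon)}(t),u\rangle\,\langle W^{(\varepsilon)}(t),v\rangle = t\,\langle u,v\rangle
\qquad\text{for real } u,v\;,
\end{equation*}
using $\langle P^+u,v\rangle=\langle u,P^+v\rangle=\langle P^+u,P^+v\rangle$, Parseval, and crucially the identity $P^+u+\overline{P^+u}=u$ for real-valued $u$: the $\mathrm{c.c.}$ term exactly restores the covariance that the projection destroys, even though no orthonormal family is in sight. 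Standardness is thus a statement in law, verified through the covariance operator, not through an ONB representation. If you replace your step (ii) by this covariance computation, the rest of your proposal (the substitutions, the $\varepsilon$-bookkeeping, and the $L^2(\Omega)$ convergence via Lemma \ref{lem:well}) goes through essentially as in the paper.
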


 \begin{proof}
We define the complex Wiener process 
\[
\mathcal{W}(T,X) = \sum_{\ell \in\mathbb{N}} e_\ell(X)\beta_\ell(T).
\]
Define the projection onto the positive wave-numbers as the  operator $P^+$ given by
\[
P^+f(x) = \frac1{2\pi} \int_0^\infty \hat{f}(k) e^{ikx} \d k.
\]
We have the following identities, first for convolutions
\[
 P^+ f\ast g =\mathcal{F}^{-1} [\hat{ P^+ f}  \hat{g}]  = \mathcal{F}^{-1} [\hat{f} \chi_{[0,\infty)}  \hat{g}] =    f\ast P^+ g , 
\]
and then, using Plancherel, for scalar-products
\[\langle P^+u,v\rangle = \langle u,P^+v\rangle  = \langle P^+u,P^+v\rangle \;.
\]
Let us now start with the rescaling 
\[
\frac1{2\pi}\sum_\ell  \int_0^T \int_\R \int_{-1/\varepsilon}^\infty  
e^{-(T-S)k^2(2+\varepsilon k)^2} e^{ik(X-Y)} \d k e_\ell (Y) \d Y \d\beta_\ell(S) e^{iX/\varepsilon } + c.c. 
\]
substituting $k'=(1+k\varepsilon)$ we get 
\[ 
= \frac1{2\pi}\sum_\ell  \int_0^T \int_\R  \frac1{\varepsilon}\int_{0}^\infty  
e^{-(T-S)\varepsilon^{-2} (1- k^2)^2} e^{ik(X-Y)/\varepsilon} \d k e^{iY/\varepsilon }e_\ell (Y) \d Y \d\beta_\ell(S)  + c.c. 
\]
plugging in $y=Y/\varepsilon$ and $x=X/\varepsilon$ yields 
 \[ 
= \frac1{2\pi}\sum_\ell  \int_0^T \int_\R  \int_{0}^\infty  
e^{-(T-S)\varepsilon^{-2} (1- k^2)^2} e^{ik(x-y)} \d k e^{iy }e_\ell (y\varepsilon) \d y \d\beta_\ell(S)  + c.c. 
\]
 substituting now $t=T/\varepsilon^2$ and $s=S/\varepsilon^2$ gives
  \[ 
= \frac1{2\pi}\sum_\ell  \int_0^t \int_\R  \int_{0}^\infty  
e^{-(t-s)(1- k^2)^2} e^{ik(x-y)} \d k e^{iy }e_\ell (y\varepsilon) \d y \d\beta_\ell(s \varepsilon^2)  + c.c. 
\]
Using a rescaled complex Wiener process we have
  \[ 
= \varepsilon \frac1{2\pi}\sum_\ell  \int_0^t \int_\R  \int_{0}^\infty  
e^{-(t-s)(1- k^2)^2} e^{ik(x-y)} \d k e^{iy }e_\ell (y\varepsilon) \d y \d\tilde\beta_\ell(s )  + c.c. 
\]
and, denoting the Green's function by $G_t(x)$, this gives 
  \[ 
= \varepsilon\sum_\ell  \int_0^t \int_\R  P^+ G_{t-s}(x-y) e^{iy }e_\ell (y\varepsilon) \d y \d\tilde\beta_\ell(s )  + c.c. 
\]
We define now $f_\ell(y)=  \varepsilon^{1/2} e^{i y }e_\ell (\varepsilon y )$, 
and $\{f_\ell\}_{\ell\in\mathbb{N}}$ is again an orthonormal basis
so that we obtain
 \[ 
= \varepsilon^{1/2} \sum_\ell  \int_0^t   P^+ G_{t-s} \ast  f_\ell  (x) \d\tilde\beta_\ell(s )  + c.c. 
\]
and by moving the $P^+$ it becomes
 \[ 
= \varepsilon^{1/2} \sum_\ell  \int_0^t   G_{t-s} \ast P^+f_\ell  (x) \d\tilde\beta_\ell(s )  + c.c. 
= \varepsilon^{1/2} \sum_\ell  \int_0^t   e^{(t-s)\mathcal{L}} P^+f_\ell  (x) \d\tilde\beta_\ell(s )  + c.c. 
\]
This is equal to  $\varepsilon^{1/2} W^{(\varepsilon)}_{\mathcal{L}}(t,x)$ if we can verify that 
\[
W^{(\varepsilon)}(t,x) = \sum_\ell P^+f_\ell  (x) \tilde\beta_\ell(t ) +c.c.
\]
 is a standard cylindrical Wiener process. Note that this would be trivial, in case  
 \[
 \sum_\ell f_\ell  (x) \tilde\beta_\ell(t)
 \]
 was a real valued Wiener process, as $P^+ u +c.c. =u$ for any real-valued function. 
 
First we see that $W^{(\varepsilon)}$ is a centred Gaussian process, 
we only need to check that the covariance-operator is the identity.
\begin{eqnarray*}
\lefteqn{\mathbb{E} \langle W^{(\varepsilon)}(t) ,u \rangle \langle W^{(\varepsilon)}(t),  v\rangle }\\
&=&\sum_\ell   \mathbb{E}  \langle  P^+f_\ell \tilde\beta_\ell(t ) +c.c. ,u \rangle \langle  P^+f_\ell \tilde\beta_\ell(t) +c.c.  , v\rangle ]\\
&=&t \sum_\ell  \langle  \overline{P^+f_\ell} ,u \rangle \langle  P^+f_\ell  , v\rangle 
+t \sum_\ell  \langle  P^+f_\ell ,u \rangle \langle  \overline{P^+f_\ell} , v\rangle \\
&=&t \sum_\ell  \overline{ \langle P^+f_\ell ,u \rangle } \langle  P^+f_\ell  , v\rangle 
+t \sum_\ell  \langle  P^+f_\ell ,u \rangle  \overline{\langle P^+f_\ell , v\rangle }\\
&=&t \sum_\ell  \overline{ \langle f_\ell ,P^+ u \rangle } \langle  f_\ell  ,P^+ v\rangle 
+t \sum_\ell  \langle  f_\ell ,P^+u \rangle  \overline{\langle f_\ell , P^+v\rangle }
\end{eqnarray*}
where we used that $u$ and $v$ are real in order to pull out the complex conjugate from the scalar-product.
Now we use Parceval 
\[
\langle f,g\rangle = \sum_\ell \langle f,f_\ell\rangle\overline{\langle g,f_\ell\rangle}
\]
to obtain 
\begin{eqnarray*}
\mathbb{E} \langle W^{(\varepsilon)}(t) ,u \rangle \langle W^{(\varepsilon)}(t),  v\rangle 
&=&t \sum_\ell \langle P^+ u ,  f_\ell\rangle \overline{  \langle  P^+ v ,f_\ell \rangle } 
+t \sum_\ell \overline{ \langle  P^+u , f_\ell\rangle }  \langle P^+v, f_\ell \rangle \\
&=& t  \langle P^+ u ,   P^+ v \rangle  
+t  \langle   P^+v,   P^+u \rangle \\
&=& t  \langle  P^+ u , v \rangle  +t  \overline{\langle P^+u   ,  v \rangle} \\
&=& t  \langle  P^+ u , v \rangle  +t  \langle \overline{P^+u }  ,  v \rangle\quad\text{ as $v$ is real}  \\
&=& t  \langle   u  , v \rangle 
\end{eqnarray*}
as $ P^+ u + \overline{P^+u } =u $ for a real-valued function $u$.
 \end{proof}


 Now we observe that on the RHS in Lemma \ref{lem:res}
 we almost have the stochastic convolution of an operator,
 if we suppose that $\varepsilon k$ is small and just had the innermost integral over $\R$, 
 instead that just $(-1/\varepsilon,+\infty)$. 
 In that case we could have written 
 \begin{equation}
 \label{e:approx}
   \varepsilon^{3/2}W^{(\varepsilon)}_\mathcal{L}(t,x) \approx \varepsilon W_{4\partial_x^2}(T,X)e^{ix} + \mathrm{c.c.}
 \end{equation}
 What we want to do in the next sections is to give a bound on the error, the difference between what we have and what we want to use.
 Therefore, we provide first in Section~\ref{sec:techlem} the technical Lemmas  
 that reduce the task to calculations on Fourier-kernels of convolution operators.

 
 \section{H\"older Estimates for general convolution integrals}
 \label{sec:techlem}
 
 
In~\eqref{e:approx} all error terms are of the type 
$ \int_0^T \cH_{T-\tau} dW(\tau)$, where   $\cH_\tau$ is a convolution operator 
written in terms of the Fourier-transformed kernel.
For the initial conditions we also need bounds on $H_\tau A$, with $A$ being a Gaussian function. In this section we provide bounds on these objects in terms of norms of the kernel.

First we need the following key Lemma for estimates:

\begin{lemma}\label{lem:sumuplem}
Let $W$ be a complex valued standard cylindrical Wiener process 
with orthonormal basis $\{e_\ell\}_{\ell\in\N}$ in $L^2(\R,\C)$
 and $\{\beta_\ell\}_{\ell\in\N}$ a family of $\C$-valued i.i.d.~standard Brownian motions.

  Given a function $f$, its Green's function $H_\tau =
  \mathcal{F}^{-1} f(\tau,\cdot)$, and its corresponding convolution operator $\cH_\tau=H_\tau\ast$
  let us define 
  \[
  \Phi(T)  =   \int_0^T \cH_{T-\tau} dW
\]
expanded as 
\[
  \Phi(T,X)  = \sum_{\ell \in \N} \int_0^T \int_\R H_{T-\tau}
  (X-Y){e}_\ell(Y)\d Y \d {\beta}_\ell(\tau)\;.
\]
Then for all  $p>1$ and all $\gamma>\frac1p$ there is a constant $C>0$ 
such that for all $L\geqslant1$,
\[
\E\|\Phi(T,\cdot) \|^p_{\mathcal{C}^0(-L,L)} \leqslant C\cdot
L^{\gamma p} \|f\|_{H^\gamma}^p,
\]
where the $L^2(0,T,H^\gamma)$-norm of $f$ is defined as  
\[\|f\|^2_{L^2(H^\gamma)} =  \int_0^T \int_\R|f(S,k)|^2(|k|^{2\gamma}+1)\d k \d S.
\]
\end{lemma}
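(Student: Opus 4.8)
The goal is to bound $\E\|\Phi(T,\cdot)\|^p_{C^0(-L,L)}$ by $C L^{\gamma p}\|f\|_{L^2(H^\gamma)}^p$, where $\Phi$ is the stochastic convolution of the convolution operator $\cH_\tau$ against a cylindrical Wiener process. The plan is to use a Kolmogorov–Chentsov type continuity argument: I would control the pointwise variance of $\Phi(T,X)$ and the variance of spatial increments $\Phi(T,X)-\Phi(T,X')$, then upgrade these second-moment bounds to the supremum over the interval $[-L,L]$ via Gaussianity and a chaining/Kolmogorov estimate. The appearance of $H^\gamma$ with $\gamma>\frac1p$ strongly suggests that the Hölder exponent needed for Kolmogorov's criterion is precisely tuned to $\gamma$, and the factor $L^{\gamma p}$ comes from covering $[-L,L]$ by $O(L)$ unit subintervals and summing the local contributions.

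\textbf{Step 1: pointwise second moment.} First I would compute, using the It\^o isometry and Parseval exactly as in Lemma~\ref{lem:well}, that
\[
\E|\Phi(T,X)|^2 = \int_0^T \|H_{T-\tau}(X-\cdot)\|_{L^2}^2 \d\tau = \int_0^T\int_\R |f(S,k)|^2 \d k\,\d S,
\]
which is independent of $X$ and bounded by $\|f\|_{L^2(H^\gamma)}^2$. This handles the "diagonal" term and already shows finiteness of each marginal.

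\textbf{Step 2: second moment of spatial increments.} Next, for $X,X'\in[-L,L]$, I would estimate $\E|\Phi(T,X)-\Phi(T,X')|^2$. Again by It\^o isometry and Plancherel this equals
\[
\int_0^T\int_\R |f(S,k)|^2\,|e^{ikX}-e^{ikX'}|^2\,\d k\,\d S.
\]
Using $|e^{ikX}-e^{ikX'}|^2 = 2(1-\cos(k(X-X'))) \leqslant C\,|k(X-X')|^{2\gamma}$ for any $\gamma\in(0,1]$ (since $1-\cos\theta\leqslant C|\theta|^{2\gamma}$), this is bounded by $C|X-X'|^{2\gamma}\int_0^T\int_\R |f(S,k)|^2|k|^{2\gamma}\d k\,\d S \leqslant C|X-X'|^{2\gamma}\|f\|_{L^2(H^\gamma)}^2$. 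This is the key place where the $H^\gamma$-norm enters and explains why $\gamma$ controls the spatial Hölder regularity.

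\textbf{Step 3: from $L^2$ to $L^p$ and the supremum.} Since $\Phi(T,X)-\Phi(T,X')$ is a centred Gaussian (complex), its $p$-th moment is controlled by its second moment, giving $\E|\Phi(T,X)-\Phi(T,X')|^p \leqslant C|X-X'|^{\gamma p}\|f\|_{L^2(H^\gamma)}^p$. With Hölder exponent $\gamma p > 1$, the Kolmogorov continuity theorem applied on each unit interval yields a continuous modification with $\E\|\Phi(T,\cdot)\|_{C^0([L_0,L_0+1])}^p \leqslant C\|f\|_{L^2(H^\gamma)}^p$, uniformly in the integer offset $L_0$ (the constant from Step~1 bounds the pointwise contribution, the increment bound controls the oscillation). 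Finally I would sum over the $O(L)$ unit subintervals tiling $[-L,L]$: crudely, $\E\sup_{[-L,L]}|\Phi|^p \leqslant \sum_{|L_0|<L}\E\|\Phi\|_{C^0([L_0,L_0+1])}^p \leqslant C\,L\,\|f\|_{L^2(H^\gamma)}^p$, which gives at least $L$; recovering the sharper exponent $L^{\gamma p}$ requires recalling that $\|\cdot\|_{C^0(-L,L)}$ here is really the $\mathcal{C}^0$-type weighted norm, so the $L^{-\gamma}$ weight built into $\mathcal{C}_\gamma^0$ trades the factor $L$ for $L^{\gamma p}$.

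\textbf{Main obstacle.} The delicate point is matching the exponent in $L^{\gamma p}$ exactly, and ensuring the Kolmogorov constant is genuinely uniform over all the unit translates of the interval; since the process is \emph{not} spatially stationary in the chosen basis (as emphasised in the remark following Definition~\ref{def:WP}), one cannot simply invoke translation invariance, and I would instead rely on the translation invariance of the \emph{kernel} $H_\tau$ together with Plancherel to make every local estimate basis-independent and $L_0$-independent. Keeping track of how the supremum over $L$ combines with the weight to produce the stated power, rather than an extra logarithmic or polynomial loss, is where the real care is needed.
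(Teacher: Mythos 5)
Your proposal is correct, but it reaches the conclusion by a genuinely different route from the paper. The core estimate is identical in both: via the It\^o isometry, Parseval and Plancherel one gets
\[
\E|\Phi(T,X)-\Phi(T,Y)|^2 \leqslant C\,|X-Y|^{2\gamma}\,\|f\|^2_{L^2(H^\gamma)}\;,
\]
exactly your Step 2 (the paper uses the same bound $|e^{ikX}-e^{ikY}|\leqslant C|k|^{\gamma}|X-Y|^{\gamma}$, packaged through the identity \eqref{e:Hdiff}), and both arguments use Gaussianity to pass from second to $p$-th moments. Where you diverge is the passage to the supremum: you invoke the Kolmogorov--Chentsov continuity theorem on each unit tile $[L_0,L_0+1]$ and then take a union bound over the $O(L)$ tiles, whereas the paper rescales $[-L,L]$ to $[-1,1]$, applies the fractional Sobolev embedding $W^{\alpha,p}\hookrightarrow C^0$ with $\gamma>\alpha>1/p$, and computes the Gagliardo double integral $\iint_{[-L,L]^2}|X-Y|^{\gamma p-1-\alpha p}\d X\d Y=CL^{p(\gamma-\alpha)+1}$ explicitly; combined with the prefactors from rescaling this produces the stated power $L^{\gamma p}$ deterministically, with no modification theorem and no tiling. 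Your uniformity concern is resolved precisely as you suggest: the increment bound comes from Plancherel applied to the translation-invariant kernel, so it is independent of the tile, and the Kolmogorov constant depends only on the moment bound, the exponents and the interval length, not on the law of the process or on $L_0$.

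One correction to your final step, which is a misreading but a harmless one: the norm $\|\cdot\|_{\mathcal{C}^0(-L,L)}$ in the statement is the plain supremum norm on $[-L,L]$; the weight $L^{-2\gamma}$ enters only later, in Lemma~\ref{lem:C0infty}, when the fixed-$L$ bounds are summed into a bound in $\cC^0_{2\gamma,T}$. No ``trading'' of the factor $L$ against the weight is needed or available here. Instead, observe that the hypothesis $\gamma>\frac1p$ means $\gamma p>1$, so your crude union-bound factor satisfies $L\leqslant L^{\gamma p}$ for $L\geqslant1$, and your Step 3 closes as written --- indeed it proves the slightly stronger bound $C\,L\,\|f\|^p_{L^2(H^\gamma)}$. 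Note also that $\gamma p>1$ is exactly the condition your Kolmogorov argument needs at the given $p$, so unlike the paper you do not need the reduction (``prove for large $p$, then come down by H\"older'') that the paper performs at the end of its proof.
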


Let us remark that this is actually a slight abuse of notation,
as we look at the $H^\gamma$-norm of the kernel $H_\tau$, which has Fourier transform $f$.

\begin{remark}
Let us remark that similar estimates than the ones presented here 
were derived in \cite{DaSS09,Dal09} for the Green's function of the 
stochastic wave equation and more regular noise. 
Moreover, we do not only need finiteness of the norms, but explicit bounds.
Especially, the dependence of the constants on $L$.
\end{remark}

\begin{remark}
We will see that the conditions above on $\gamma$ are no problem, since
thanks to Gaussianity we can do the estimates for $p=2$ and then send
$p$ to infinity and hence we can choose $\gamma>0$ as small as we want.
\end{remark}

\begin{proof}[Proof of Lemma~\ref{lem:sumuplem}]
  We proceed by steps. We start by using the fractional Sobolev embedding \cite{RuSi96}
  and the explicit representation of the norm in $W^{\alpha,p}$
 \begin{equation*}
  \begin{split}
&\|\Phi(T,\cdot)\|^p_{\mathcal{C}^{0}([-L,L])}  
 = \|\Phi(T, L \cdot)\|^p_{\mathcal{C}^{0}([-1,1])}  
 \leqslant C\|\Phi(T,L\cdot)\|^p_{\mathcal{W}^{\alpha,p}([-1,1])}
\\
&= C\bigg[\|\Phi(T,L\cdot)\|^p_{L^p[-1,1]}+ \iint_{[-1,1]^2}\frac{|\Phi(T,LX)-\Phi(T,LY)|^p}{|X-Y|^{1+\alpha p}}\d X\d Y\bigg] \\
&= CL^{-1}\bigg[ \|\Phi(T,\cdot)\|^p_{L^p[-L,L]}+ L^{\alpha p} \iint_{[-L,L]^2}\frac{|\Phi(T,X)-\Phi(T,Y)|^p}{|X-Y|^{1+\alpha p}}\d X\d Y\bigg]
  \end{split}
 \end{equation*}
with $\alpha$ fixed later such that $\gamma>\alpha >1/p$. Let us remark that by H\"older inequality it is
sufficient to verify the claim only for sufficiently large $p>0$.

For the next step we take the expectation and use the Gaussianity of $\Phi$:
 \begin{equation}
\label{eq:explrep} 
\begin{split}
    \E\|\Phi(T,\cdot)\|^p_{\mathcal{C}^{0}([-L,L])} 
    \leqslant & C L^{-1}\E\|\Phi\|^p_{L^p[-L,L]} \\ 
    &+ CL^{-1+\alpha p} \iint_{[-L,L]^2}\frac{C_p(\E|
    \Phi(T,X)-\Phi(T,Y)|^2)^{\sfrac{p}{2}}}{|X-Y|^{1+\alpha p}}\d X\d Y].
  \end{split}
  \end{equation} 
 By means of this well-known trick we translated our problem from the generic $p$-th moment to the second moment only.
 
 We now proceed, bounding the second moments (first use It\^o-isometry and substitute in time):
 \begin{equation}
  \begin{split}
   \E |\Phi(T,X)-\Phi(T,Y)|^2 &= \sum_{\ell \in \N}     
   \int_0^T \Big| \int_\R[H_{\tau}(X-z)-H_{\tau}(Y-z)]e_\ell(z)\d z \Big|^2\d 
   \tau\\
   \mathrm{[Parceval]}\quad& =
    \int_0^T \|H_\tau(X-\cdot) - H_\tau(Y-\cdot)\|^2_{L^2} \d\tau\\
  \mathrm{[Plancherel]}\quad & = \int_0^T \int_\R |f(\tau,k)(e^{ikX}-e^{ikY})|^2\d k\d\tau\\
   &\leqslant \int_0^T \int_\R |f(\tau,k)|^2|k|^{2\gamma}\d k\d\tau 
   |X-Y|^{2\gamma}\label{eq:termingamma}.
  \end{split}
\end{equation}

We used for the application of Plancherel,  that for $H_T$ being the kernel of $f(T,\cdot)$ one has
\begin{equation}
\label{e:Hdiff}
  \begin{split}
  H_{\tau}(X-z)-H_{\tau}(Y-z)&= \int_\R f(\tau,k)(e^{ik(X-z)}-e^{ik(Y-z)})\d k\\
  &= \int_\R f(\tau,k)(e^{ikX}-e^{ikY})e^{-ikz}\d k\\
  &= \mathcal{F}[f(\tau,k)(e^{ikX}-e^{ikY})]\;.
 \end{split}
\end{equation}
 Now we take~\eqref{eq:termingamma} and we plug it back into~\eqref{eq:explrep}:
 \begin{multline}\label{eq:sumupspace}
\E\|\Phi(T,\cdot)\|^p_{\mathcal{C}^{0}([-L,L])} 
\leqslant C L^{-1} \E\|\Phi\|^p_{L^p[-L,L]} 
\\ 
+ C L^{-1+\alpha p} \|f\|_{L^2(H^\gamma)}^{p} \iint_{[-L,L]^2}\frac{|X-Y|^{\gamma p}}{|X-Y|^{1+\alpha p}}\d X\d Y \;.
\end{multline}
We can compute
\begin{equation*}
  \iint_{[-L,L]^2}\frac{|X-Y|^{\gamma p}}{|X-Y|^{1+\alpha p}}\d X\d Y
  =C \cdot L^{p(\gamma -\alpha)+1}, 
\end{equation*}

so that~\eqref{eq:sumupspace} becomes
\begin{equation}\label{eq:sumupspace3}
  \E\|\Phi(T,\cdot)\|^p_{\mathcal{C}^{0}} 
  \leqslant C L^{-1} \E\|\Phi\|^p_{L^p}
  + C L^{p\gamma} \|f\|_{L^2(H^\gamma)}^{p}   \;.
\end{equation}

Now there is only one thing left: bound $\E\|\Phi\|^p_{L^p}$. To do that, we take the same road followed above for the other term:
  \begin{equation}\label{eq:lpboundf}
   \begin{split}
    \E\|\Phi(T,\cdot)\|^p_{L^p[-L,L]} &= 
    \E \int_{-L}^L|\Phi(T,X)|^p \d X
    \leqslant C_p \int_{-L}^L (\E |\Phi(T,X)|^2)^{\sfrac{p}{2}} \d X\\
    & \leqslant C \int_{-L}^L \Big( \int_0^T \int_\R |f(\tau,k)|^2\d k 
      \d\tau \Big)^{\sfrac{p}{2}} \d X
\leqslant C L \|f\|_{L^2(H^\gamma)}^{p},
   \end{split}
  \end{equation}
 and we can substitute that in~\eqref{eq:sumupspace3}, obtaining the thesis for $p$ large. 
  To finish the proof for any $p>0$, we use H\"older inequality for $q$ sufficiently large:
  \[
    \E\|\Phi(T,\cdot) \|^p_{\mathcal{C}^0(-L,L)} 
    \leqslant (\E\|\Phi(T,\cdot) \|^{pq}_{\mathcal{C}^0(-L,L)})^{\sfrac{1}{q}}
     \leqslant  C L^{\gamma p} \|f\|^p_{L^2(H^\gamma)}\;.\qedhere
  \]
\end{proof}
Let us now extend to an estimate in space and time, where we first focus on a bounded domain in space and time.
\begin{lemma}
 \label{lem:key2}
 Let $\Phi$ be as in Lemma \ref{lem:sumuplem} and define for some $\gamma>0$ and $T>0$
 \begin{multline*}
 \|f\|_{\mathcal{K}_\gamma}^2
 = \sup_{S\in[0,T]} S^{-2\gamma} \int_0^S \|f(\tau,\cdot)\|^2_{L^2} \d\tau +
 \\+   \sup_{0\leqslant R \leqslant S \leqslant T} (S-R)^{-2\gamma}\int_0^R \|f(S-R+\tau,\cdot)-f(\tau,\cdot)\|^2_{L^2} \d\tau \;.
\end{multline*}
Then for all  $p>1$ such that $\gamma>\frac1p$ there is a constant $C>0$ 
such that for all $L\geqslant1$,
\[
\E\|\Phi \|^p_{\mathcal{C}^0([0,T]\times [-L,L])} \leqslant C\cdot
L^{p \gamma} [\|f\|_{H^\gamma}^2 +   \|f\|_{\mathcal{K}_\gamma}^2]^{p/2}.
\]
\end{lemma}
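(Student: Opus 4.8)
The plan is to repeat the argument of Lemma~\ref{lem:sumuplem}, but now treating $\Phi$ as a function of the joint variable $(S,X)$ on the rectangle $[0,T]\times[-L,L]$, so that a single fractional Sobolev embedding delivers the space--time supremum. First I would rescale only the spatial variable, setting $\tilde\Phi(S,X)=\Phi(S,LX)$ on $[0,T]\times[-1,1]$, and use the two--dimensional fractional Sobolev embedding $\mathcal{W}^{\alpha,p}([0,T]\times[-1,1])\hookrightarrow\mathcal{C}^0([0,T]\times[-1,1])$, valid for $\alpha p>2$, with $\alpha$ chosen so that $\gamma>\alpha$. As in Lemma~\ref{lem:sumuplem}, taking expectations and invoking Gaussianity reduces every $p$--th moment appearing in the Gagliardo seminorm and in the $L^p$--part to a second moment raised to the power $p/2$; it therefore suffices to control $\E|\Phi(S,X)-\Phi(R,Y)|^2$ and the variance $\E|\Phi(S,X)|^2$.

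The space increments are already available: splitting $\Phi(S,X)-\Phi(R,Y)$ as $[\Phi(S,X)-\Phi(S,Y)]+[\Phi(S,Y)-\Phi(R,Y)]$ isolates a pure spatial increment and a pure temporal one. For the spatial part the Parseval/Plancherel computation of Lemma~\ref{lem:sumuplem} gives $\E|\Phi(S,X)-\Phi(S,Y)|^2\leqslant\|f\|_{H^\gamma}^2|X-Y|^{2\gamma}$ for every $S\leqslant T$, since the time integral only runs up to $S\leqslant T$. The genuinely new ingredient is the temporal increment, and here the definition of $\|\cdot\|_{\mathcal{K}_\gamma}$ is tailor--made. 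For $S>R$ I would split the stochastic integral defining $\Phi(S,\cdot)-\Phi(R,\cdot)$ into the contribution on $[0,R]$, whose integrand is $H_{S-\tau}-H_{R-\tau}$, and the fresh contribution on $[R,S]$, whose integrand is $H_{S-\tau}$. By It\^o isometry and Plancherel (both independent of the spatial point, by translation invariance) this yields
\begin{equation*}
\E|\Phi(S,X)-\Phi(R,X)|^2 = \int_0^R \|f(S-\tau,\cdot)-f(R-\tau,\cdot)\|_{L^2}^2\d\tau + \int_R^S \|f(S-\tau,\cdot)\|_{L^2}^2\d\tau.
\end{equation*}
The substitution $u=R-\tau$ turns the first integral into $\int_0^R\|f(S-R+u,\cdot)-f(u,\cdot)\|_{L^2}^2\d u$, exactly the second term of $\|f\|_{\mathcal{K}_\gamma}^2$, while $v=S-\tau$ turns the second into $\int_0^{S-R}\|f(v,\cdot)\|_{L^2}^2\d v$, exactly the first term evaluated at level $S-R$; hence $\E|\Phi(S,X)-\Phi(R,X)|^2\leqslant 2(S-R)^{2\gamma}\|f\|_{\mathcal{K}_\gamma}^2$. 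The same first term at level $S$ also bounds the variance, $\E|\Phi(S,X)|^2=\int_0^S\|f(\tau,\cdot)\|_{L^2}^2\d\tau\leqslant T^{2\gamma}\|f\|_{\mathcal{K}_\gamma}^2$.

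It then remains to assemble these bounds. Inserting the two separate second--moment estimates into the Gaussian--reduced Gagliardo integrand gives a numerator bounded by $C_p[L^{\gamma p}\|f\|_{H^\gamma}^p|X-Y|^{\gamma p}+\|f\|_{\mathcal{K}_\gamma}^p|S-R|^{\gamma p}]$; note that only the spatial increment carries the rescaling factor $L^{2\gamma}$, which is consistent with the fact that time was not rescaled. Since both $|X-Y|$ and $|S-R|$ are dominated by the two--dimensional distance $|(S,X)-(R,Y)|$, the Gagliardo double integral reduces to $\iint\!\!\iint|(S,X)-(R,Y)|^{(\gamma-\alpha)p-2}$ over the bounded square, which converges near the diagonal precisely because $\gamma>\alpha$, leaving a finite constant. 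Together with the elementary $L^p$--part bound (controlled by the variance) and $L\geqslant1$, this produces $C L^{\gamma p}[\|f\|_{H^\gamma}^2+\|f\|_{\mathcal{K}_\gamma}^2]^{p/2}$, as claimed. As in Lemma~\ref{lem:sumuplem}, I would first run the argument for $p$ large (so that $\alpha p>2$ and $\gamma>\alpha$ are compatible) and then descend to all admissible $p$ by H\"older's inequality; alternatively, an anisotropic embedding, fractional of order $\alpha$ with $\alpha p>1$ separately in time and in space, recovers the stated threshold $\gamma>\frac1p$ directly. The main obstacle is the temporal second moment: getting the splitting at $\tau=R$ right and matching the two resulting integrals, after the substitutions, exactly to the two constituents of $\|f\|_{\mathcal{K}_\gamma}$ — everything else parallels the spatial lemma.
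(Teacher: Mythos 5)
Your proposal is correct and takes essentially the same route as the paper's proof: the same space--time fractional Sobolev embedding with Gaussianity reducing $p$-th moments to second moments, the same split of $\Phi(S,X)-\Phi(R,Y)$ into a spatial increment (handled by the computation of Lemma~\ref{lem:sumuplem}) and a temporal increment handled by splitting the stochastic integral at $\tau=R$ and applying the It\^o isometry of Lemma~\ref{lem:ItoIso}, with the two substituted integrals matching exactly the two constituents of $\|f\|_{\mathcal{K}_\gamma}$, followed by the large-$p$-first, H\"older-descent argument. The only deviations are cosmetic and harmless: you exploit orthogonality of the integrals over disjoint time intervals to get an equality where the paper uses $|a+b|^2\leqslant 2|a|^2+2|b|^2$, and you bound the $L^p$-part by $T^{2\gamma}\|f\|_{\mathcal{K}_\gamma}^2$ where the paper integrates its earlier bound~\eqref{eq:lpboundf} in time.
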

\begin{remark}
Obviously, $\|\cdot\|_{\mathcal{K}_\gamma}$ defines a norm, 
but we do not know whether it is equivalent to a known space. 
The first term has some similarities to Morrey-spaces,
while the second one is an averaged Hölder coefficient.
But we do not need properties of that space, 
we just need to bound explicitly these norms.
\end{remark}
We recall the following Ito-isometry, which we already stated for the stochastic convolution. 
\begin{lemma}
\label{lem:ItoIso}
Let $g$ be a square integrable function in space and time and $W$ a complex standard cylindrical Wiener process. Then 
\[
\E \Big|\int_a^b \mathcal{F}^{-1}g(t)*\d W(t)\Big|^2 = \int_a^b \|g(t,\cdot)\|^2_{L^2}\d t\;,
\]
where the stochastic integral is a function in $x$.
\end{lemma}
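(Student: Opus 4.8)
The plan is to imitate, almost verbatim, the second-moment computation already carried out in the proof of Lemma~\ref{lem:well}, which is precisely this isometry for the particular kernel $G_s$. First I would write the stochastic integral in its series expansion along the basis $\{e_\ell\}$ used to define $W$. Writing $H_t=\mathcal{F}^{-1}g(t,\cdot)$ and denoting by $c_\ell(t,x)=(H_t\ast e_\ell)(x)=\int_\R H_t(x-y)e_\ell(y)\d y$ the deterministic coefficient functions, one has
\[
\int_a^b \mathcal{F}^{-1}g(t)\ast \d W(t)\,(x)=\sum_{\ell\in\N}\int_a^b c_\ell(t,x)\,\d\beta_\ell(t).
\]

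Next I would compute the second moment at a fixed $x$ by expanding the square and taking expectations. Two facts enter here: first, the independence of the family $\{\beta_\ell\}$ together with their mean-zero property, which annihilates all cross terms $\ell\neq m$; and second, the complex It\^o-isometry $\E|\int_a^b c_\ell\,\d\beta_\ell|^2=\int_a^b|c_\ell|^2\d t$ for each fixed $\ell$, which is exactly where the normalisation $\E|\beta_\ell(t)|^2=t$ and $\E\beta_\ell(t)^2=0$ is used. This yields
\[
\E\Big|\int_a^b \mathcal{F}^{-1}g(t)\ast \d W(t)\,(x)\Big|^2=\int_a^b\sum_{\ell\in\N}|c_\ell(t,x)|^2\,\d t.
\]

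The third step identifies the inner sum with the $L^2$-norm of the kernel. Since $\{e_\ell\}$ (equivalently $\{\overline{e_\ell}\}$) is an orthonormal basis of $L^2(\R,\C)$, Parseval's identity applied to the function $y\mapsto H_t(x-y)$ gives $\sum_{\ell}|c_\ell(t,x)|^2=\|H_t(x-\cdot)\|_{L^2}^2$. Translation invariance of Lebesgue measure then removes the $x$-dependence, so that $\|H_t(x-\cdot)\|_{L^2}^2=\|H_t\|_{L^2}^2$, and a final application of Plancherel turns this into $\|g(t,\cdot)\|_{L^2}^2$. Substituting back produces the claimed identity, which in particular is independent of $x$.

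I do not expect a genuine obstacle, since the statement is exactly the isometry already invoked for the stochastic convolution. The only points requiring a little care are the bookkeeping of complex conjugates in the Parseval step, where one must observe that using $\{e_\ell\}$ rather than $\{\overline{e_\ell}\}$ is harmless because both are orthonormal bases, and the justification that the series defining the integral converges in $L^2(\Omega)$, which is automatic once $\int_a^b\|g(t,\cdot)\|_{L^2}^2\d t$ is finite, exactly as in the Cauchy-sequence argument of Lemma~\ref{lem:well}.
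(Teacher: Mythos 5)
Your proposal is correct and follows essentially the same route as the paper's proof: series expansion along $\{e_\ell\}$, the It\^o isometry with vanishing cross terms, Parseval to identify $\sum_\ell |c_\ell(t,x)|^2$ with $\|H_t(x-\cdot)\|_{L^2}^2$, then translation invariance and Plancherel. Your extra remarks on the conjugate basis and the complex normalisation $\E|\beta_\ell(t)|^2=t$ only make explicit what the paper leaves implicit.
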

\begin{proof}
By direct calculation,
\begin{equation*}
\begin{split}
\E \Big|\int_a^b \mathcal{F}^{-1}g(r)*\d W(r)\Big|^2 = \E \Big|\sum_\ell\int_a^b (\mathcal{F}^{-1}g(r)*e_\ell)\d\beta_\ell\Big|^2\\
= \sum_\ell \int_a^b  |\mathcal{F}^{-1}g(r)*e_\ell|^2 \d r = \sum_\ell \int_a^b  \langle\mathcal{F}^{-1}g(r,x-\cdot),e_\ell\rangle^2 \d r\\
= \int_a^b\|\mathcal{F}^{-1}g(r,x-\cdot)\|_{L^2}^2\d r = \int_a^b \|g(r,\cdot)\|^2_{L^2}\d r \;,
\end{split}
\end{equation*}
where we used the Parceval theorem, translation invariance and Fourier isometry.
\end{proof}
\begin{proof}[Proof of Lemma \ref{lem:key2}.]
We use again fractional Sobolev spaces with $\gamma>\alpha>2/p$ and  Gaussianity to obtain
(with $L^p$-norm in space and time)
\begin{align*}
\lefteqn{ \E \|\Phi \|^p_{\mathcal{C}^0 ([0, T] \times [- L, L])}}\\ 
&\leqslant C  \E \Big[ L^{-1} \|\Phi \|^p_{L^p} 
+ L^{-1+\alpha p} \int_0^T \int_0^T \int_{- L}^L \int_{- L}^L \frac{| \Phi
   (S, X) - \Phi (R, Y) |^p}{[(S - R)^2 + (X - Y)^2]^{1 + \alpha p / 2}}
   \d  X \d  Y \d  S \d  R \Big] \\
&\leqslant C \Big[L^{-1} \E \| \Phi \|^p_{L^p} +L^{-1+\alpha p} 
   \int_0^T \int_0^T \int_{- L}^L \int_{- L}^L \frac{C_p  [\E | \Phi
   (S, X) - \Phi (R, Y) |^2]^{p / 2}}{[(S - R)^2 + (X - Y)^2]^{1 + \alpha p /
   2}} \d  X \d  Y \d  S \d  R \Big] . 
  \end{align*}  
Integrating equation~\eqref{eq:lpboundf} in time treats the $L^p$-norm in space and time, so we only need to look at the fourfold integral.
Here we focus on the second moments in the integrand. Using~\eqref{eq:termingamma}, we obtain
\begin{eqnarray*}
\lefteqn{  \E | \Phi (S, X) - \Phi (R, Y) |^2 }\\
& \leqslant & 2 \E | \Phi
  (S, X) - \Phi (S, Y) |^2 +2\E | \Phi (S, Y) - \Phi (R, Y) |^2\\
  & \leqslant & 2 
 \|f\|^2_{L^2(H^\gamma)}| X - Y |^{2
  \gamma} + 2\E | \Phi (S, Y) - \Phi (R, Y) |^2 .
\end{eqnarray*}
Since the first term has already been treated before,  we focus on the second one. If we can 
prove a bound by  $C  \|f\|_{\mathcal{K}_\gamma}^2 | S - R |^{2 \gamma}$, then we can easily finish the proof 
as in the previous Lemma \ref{lem:sumuplem} with sufficiently large $p$.

Let's
assume without loss of generality $S > R$.
\begin{eqnarray*}
  \E | \Phi (S, \cdot) - \Phi (R, \cdot) |^2 
  & = & \E \Big|
  \int_0^S H_{S - t} \ast \d  W(t) - \int_0^R H_{R - t} \ast \d  W(t)
  \Big|^2\\
  & \leqslant & 2\E \Big| \int_R^S H_{S - t} \ast \d  W (t)
  \Big|^2 + 2\E \Big| \int_0^R (H_{S - t} - H_{R - t}) \ast \d 
  W (t) \Big|^2\;.
\end{eqnarray*}
So we have by Lemma \ref{lem:ItoIso}
\begin{eqnarray*}
  \E | \Phi (S, Y) - \Phi (R, Y) |^2 
  & \leqslant & 2 \int_R^S \!\!\| f({S-t},\cdot) \|^2_{L^2} \d  t + 2 \int_0^R \!\!\| f({S - t},\cdot) - f({R - t},\cdot) \|^2_{L^2}
  \d  t\\
  & = & 2 \int_0^{S - R} \| f({t},\cdot) \|^2_{L^2} \d  t 
  + 2 \int_0^R \| f({S - R + t},\cdot) - f({t},\cdot) \|^2_{L^2} \d  t\\
  & \leqslant& 2 \|f\|_{\mathcal{K}_\gamma}^2\cdot|S-R|^{2\gamma} .  
\end{eqnarray*}
\end{proof}
The following Lemma states that the bounds previously obtained for fixed $L>0$
actually yield a bound in $C^0_{2\gamma,T}$.
\begin{lemma}
\label{lem:C0infty}
Fix $T>0$ and $\gamma>0$ and let $u$ be a random variable such that 
 for all $p>0$ there is a constant $C>0$ such that 
 \[
 \E\|u\|^p_{C^0([0,T]\times [-L,L])} \leqslant C\left(L^\gamma \lambda\right)^p\;,
 \]
 then for $p>\sfrac{1}{\gamma}$
 \[
 \P(\|u\|_{\cC^0_{2\gamma,T}} \geqslant K ) \leqslant C \sum_{L\in\N} L^{ -\gamma p} \Big(\frac\lambda{K}\Big)^p\;.
  \]
\end{lemma}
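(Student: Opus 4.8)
The plan is to combine a union bound over the spatial scales $L\in\N$ with a Markov-type inequality applied to the $p$-th moment hypothesis. First I would unfold the definition of the norm $\|\cdot\|_{\cC^0_{2\gamma,T}}$: since the two suprema in its definition commute,
\[
\|u\|_{\cC^0_{2\gamma,T}} = \sup_{L\in\N} L^{-2\gamma}\,\|u\|_{C^0([0,T]\times[-L,L])},
\]
so that the event $\{\|u\|_{\cC^0_{2\gamma,T}}\geqslant K\}$ is precisely the union over $L\in\N$ of the events $\{\|u\|_{C^0([0,T]\times[-L,L])}\geqslant K L^{2\gamma}\}$.

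Next I would apply the union bound to reduce the probability to a sum over $L$,
\[
\P(\|u\|_{\cC^0_{2\gamma,T}}\geqslant K) \leqslant \sum_{L\in\N}\P\big(\|u\|_{C^0([0,T]\times[-L,L])}\geqslant KL^{2\gamma}\big),
\]
and bound each summand by Markov's inequality in the form $\P(Z\geqslant a)\leqslant a^{-p}\E Z^p$, inserting the hypothesis $\E\|u\|^p_{C^0([0,T]\times[-L,L])}\leqslant C(L^\gamma\lambda)^p$. The powers of $L$ then combine as $L^{\gamma p}/(L^{2\gamma})^p = L^{-\gamma p}$, giving
\[
\P\big(\|u\|_{C^0([0,T]\times[-L,L])}\geqslant KL^{2\gamma}\big) \leqslant \frac{C(L^\gamma\lambda)^p}{(KL^{2\gamma})^p} = C\,L^{-\gamma p}\Big(\frac{\lambda}{K}\Big)^p,
\]
and summing over $L$ yields exactly the claimed estimate.

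There is essentially no analytic obstacle; the argument is a routine union bound together with Chebyshev's inequality. The only point deserving attention is the bookkeeping of exponents: the factor $2\gamma$ (rather than $\gamma$) in the target norm is precisely what produces the extra decay $L^{-\gamma p}$, and the hypothesis $p>1/\gamma$ guarantees $\gamma p>1$, so that $\sum_{L\in\N}L^{-\gamma p}$ converges and the right-hand side is finite. I would also note that the constant $C$ and the moment bound implicitly depend on $p$, but since $p$ is fixed throughout the statement this is harmless.
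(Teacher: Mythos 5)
Your proof is correct and follows exactly the paper's argument: rewrite the event as a union over $L\in\N$, apply the union bound, then use Chebyshev/Markov's inequality with the $p$-th moment hypothesis, with the same exponent bookkeeping $L^{\gamma p}(KL^{2\gamma})^{-p}=L^{-\gamma p}K^{-p}$. Your added remark that $p>1/\gamma$ makes $\sum_{L\in\N}L^{-\gamma p}$ converge correctly explains the role of that hypothesis, which the paper leaves implicit.
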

\begin{proof}
Using Chebychev inequality yields
 \begin{align*}
  \P(\|u\|_{\cC^0_{2\gamma,T}} \geqslant K )
  & =   \P\left( \exists L\in\N \ :\ L^{-2\gamma} \|u\|_{C^0([0,T]\times [-L,L])} \geqslant K \right) \\
  &\leqslant \sum_{L\in \N} \P\left(   \|u\|_{C^0([0,T]\times [-L,L])} \geqslant KL^{2\gamma} \right) \\  
  &\leqslant \sum_{L\in \N}   C   \lambda^p   ( KL^\gamma)^{-p} \;.
 \end{align*}
\end{proof}


 \subsection{Estimates for Gaussian initial conditions}


 In this section, we focus on technical results necessary 
 to treat the term $e^{t\cL_\varepsilon}u_0$ from the mild formulation,
 where the initial condition $u_0(x)=A(\varepsilon x)e^{ix}+c.c.$ is given by a modulated wave 
 with centred Gaussian amplitude $A$. Due to Gaussianity, 
 we need much less regularity of $A$ than we would need in the deterministic case.
 \begin{lemma}
  \label{lem:GausIC}
 Let $A$ be a centred $\C$-valued Gaussian   
 with covariance operator $Q$ given by a Fourier multiplier $q(k) \geq 0$.
 Let $\cH_\tau$ be as in Lemma \ref{lem:sumuplem}.
 
 Define 
 \[
 \|f\|^2_{\cL^\infty_Q} = \sup_{S\in[0,T]} \int_\R  q(k) |f(S,k)|^2 (|k|^{2\gamma}+1)\d k
 +  \sup_{S,R\in[0,T]} \int_\R q(k) \frac{|f(S,k)-f(R,k)|^2}{|S-R|^{2\gamma}} \d k.
 \]
 Then for all $p>0$, $T>0$, and $\kappa>0$ there is a constant $C >0$ such that
 \[
 \E \|\cH_\tau A \|^p_{C^0([0,T]\times[-L,L])} 
 \leqslant    C L^{\gamma p} \|f\|^p_{\cL^\infty_Q}\;.
 \]
 \end{lemma}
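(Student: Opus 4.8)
The plan is to run the argument of Lemmas~\ref{lem:sumuplem} and~\ref{lem:key2} almost verbatim, the only new ingredient being the second-moment computation, where the It\^o isometry in time is replaced by the spectral (covariance) structure of the Gaussian field $A$. The basic observation is that for each fixed $(\tau,x)$ the variable $(\cH_\tau A)(x)$ is a centred Gaussian, being a bounded linear image of $A$; hence, exactly as before, all its $p$-th moments are controlled by the $p/2$ power of the corresponding second moment, and the whole estimate reduces to bounding second moments of space-time increments. Following Lemma~\ref{lem:key2}, I would first rescale space by $L$ and apply the fractional Sobolev (Slobodeckij) embedding of $\mathcal{W}^{\alpha,p}([0,T]\times[-1,1])$ into $C^0$ with $\gamma>\alpha>2/p$, producing an $L^{-1}\E\|\cdot\|^p_{L^p}$ term together with a fourfold Gagliardo integral over $[0,T]^2\times[-L,L]^2$ whose integrand, by Gaussianity, is $C_p\,(\E|\cH_S A(X)-\cH_R A(Y)|^2)^{p/2}$.

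The key step is the explicit covariance identity, which plays the role of Lemma~\ref{lem:ItoIso}. Since $A$ has spectral density $q$ and $\cH_\tau$ acts as multiplication by $f(\tau,\cdot)$ in Fourier space, writing $A$ through its spectral representation gives
\[
\E\big|\cH_S A(X)-\cH_R A(Y)\big|^2
= \int_\R \big|f(S,k)e^{ikX}-f(R,k)e^{ikY}\big|^2\, q(k)\,\d k .
\]
I would then split this into a spatial and a temporal increment as in Lemma~\ref{lem:key2}. For the spatial part (fixing the time $S$), the bound $|e^{ikX}-e^{ikY}|\leqslant 2|k|^\gamma|X-Y|^\gamma$ already used in~\eqref{eq:termingamma} yields
\[
\E\big|\cH_S A(X)-\cH_S A(Y)\big|^2 \leqslant |X-Y|^{2\gamma}\int_\R |f(S,k)|^2|k|^{2\gamma}q(k)\,\d k \leqslant |X-Y|^{2\gamma}\,\|f\|^2_{\cL^\infty_Q},
\]
controlled by the first term of the norm. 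For the temporal part (fixing $Y$) one gets directly
\[
\E\big|\cH_S A(Y)-\cH_R A(Y)\big|^2 = \int_\R |f(S,k)-f(R,k)|^2 q(k)\,\d k \leqslant |S-R|^{2\gamma}\,\|f\|^2_{\cL^\infty_Q},
\]
which is exactly the second term of the norm.

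Combining the two pieces through $|X-Y|^{2\gamma}+|S-R|^{2\gamma}\leqslant 2[(S-R)^2+(X-Y)^2]^{\gamma}$ gives $\E|\cH_S A(X)-\cH_R A(Y)|^2\leqslant C\,\|f\|^2_{\cL^\infty_Q}[(S-R)^2+(X-Y)^2]^{\gamma}$, which is precisely the input needed in Lemma~\ref{lem:key2}. Inserting it into the Gagliardo integral, the factor $[(S-R)^2+(X-Y)^2]^{\gamma p/2}$ partially cancels the singular denominator, and the resulting integral, together with the prefactor $L^{-1+\alpha p}$, contributes the power $L^{p\gamma}$ exactly as in Lemma~\ref{lem:key2}; combined with the analogue of the $L^p$-bound~\eqref{eq:lpboundf} (now with $q$ inserted, giving $L^{-1}\E\|\cdot\|^p_{L^p}\leqslant C\,\|f\|^p_{\cL^\infty_Q}$ for $L\geqslant1$), this produces the claimed bound $C L^{p\gamma}\|f\|^p_{\cL^\infty_Q}$ for $p$ large, and a final H\"older inequality in $p$ removes the restriction $\gamma>2/p$ and extends it to every $p>0$.

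The step I expect to require the most care is the covariance identity and the spectral representation of $A$ underlying it: when $q$ is not integrable the field $A$ is only a tempered-distribution-valued Gaussian rather than an $L^2$ function, so one must justify that $\cH_\tau A$ is nonetheless a genuine, pointwise defined Gaussian random field and that the displayed second-moment formula holds; the two integrals defining $\|f\|_{\cL^\infty_Q}$ are designed precisely so that the spatial and temporal increments above stay finite. Once this identity is in place, the remainder is a line-by-line repetition of the bounded-domain computation of Lemma~\ref{lem:key2}, and I anticipate no further obstacle.
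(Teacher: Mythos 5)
Your proposal is correct and follows essentially the same route as the paper: the paper likewise reduces everything, via Gaussianity and the fractional Sobolev embedding of Lemmas~\ref{lem:sumuplem} and~\ref{lem:key2}, to three second-moment bounds, and computes them through the representation $A=\sum_\ell n_\ell Q^{1/2}e_\ell$ with Parseval/Plancherel, which is exactly your spectral covariance identity (the multiplier of $Q^{1/2}$ being $q^{1/2}$). The subtlety you flag about $q$ non-integrable is handled in the paper by this same cylindrical-Gaussian representation, so no further work is needed.
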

 \begin{proof}
  We know that  $A=\sum  n_\ell Q^{1/2}e_\ell$ for any
  orthonormal basis $\{e_\ell\}_{\ell\in\N}$ and standard $\C$-valued Gaussians  $\{n_\ell\}_{\ell\in\N}$, 
  because any Gaussian can be written in
  terms of the covariance operator $Q$ and a cylindrical Gaussian $\sum  n_\ell e_\ell$. 
  Note that the symmetric operator $Q^{1/2}$ has Fourier multiplier $q^{1/2}$.
  
Using again fractional Sobolev spaces with $\gamma>\alpha>2/p$ and  Gaussianity,
it is sufficient to bound three second moments. First
\begin{align*}
 \E |\cH_SA(X)- \cH_SA(Y)|^2 
 & =   \E  \Big|\int_\R [H_S(X-Y)- H_S(X-Y) ] A(Y) \d Y \Big|^2 \\
  & =   \E \Big| \sum_{l\in\N} \int_\R [H_S(X-Y)- H_S(X-Y) ] Q^{1/2} e_\ell (Y) \d Y n_\ell   \Big|^2 \\
    & =    \sum_{l\in\N} \E \Big| \langle [H_S(X-\cdot- H_S(X-\cdot) ] , Q^{1/2} e_\ell \rangle_{L^2} \Big|^2 \\
  & =    \sum_{l\in\N} \Big|\langle Q^{1/2} [H_S(X-\cdot)- H_S(X-\cdot) ]  ,  e_\ell \rangle_{L^2} \Big|^2 \\ 
   & =    \| Q^{1/2} [H_S(X-\cdot)- H_S(X-\cdot) ]  \|^2_{L^2} \\     
     & =    \| q^{1/2} \cF^{-1} [H_S(X-\cdot)- H_S(X-\cdot) ]  \|^2_{L^2} \\ 
      & =   C \int_\R  q(k) |f(S,k)|^2 |k|^{2\gamma}\d k \cdot |X-Y|^{2\gamma}  \;,
\end{align*}
where we again used Parceval, Plancherel and in the final step~\eqref{e:Hdiff}.

Secondly, we can proceed similar to the first case to obtain
\begin{align*}
 \E |\cH_S A(X)- \cH_R A(X)|^2 
 & =  \int_\R q(k) |\cF^{-1}(H_S-H_R)(k)|^2 \d k \\
 & \leqslant \sup_{S,R\in[0,T]} \int_\R q(k) \frac{|f(S,k)-f(R,k)|^2}{|S-R|^{2\gamma}} \d k \cdot |T-R|^{2\gamma}\;.
\end{align*}
 And finally, we can verify
 \begin{align*}
 \E |\cH_TA(X)|^2 =  C \int_\R  q(k) |f(T,k)|^2 \d k.
\end{align*}
 \end{proof}

\section{Main Results}
\label{sec:main}

This section provides the main results, where we focus on the new stochastic estimates 
and treat the deterministic estimates needed as an assumption. 
We give
\begin{itemize}
 \item Approximation Result for Stochastic Convolutions
 \item Attractivity for deterministic initial conditions
 \item Full linear Approximation Theorem
\end{itemize}

Let us first state the general formulation, and fix $W^{(\varepsilon)}$ to be the 
proper rescaled Wiener process as in the rescaling Lemma \ref{lem:res}.
We consider a mild solution of the linear problem
\[ \partial_t u =\mathcal{L}u + \nu \varepsilon^2 u + \varepsilon^{3 / 2}
   \partial_t W^{(\varepsilon)},  \qquad u(0)=u_0\;,
\]
that is a function $u$ which satisfies
\[ u (t) = e^{t\mathcal{L}} u_0 + \nu \varepsilon^2 \int_0^t e^{(t - s)
   \mathcal{L}} u \d  s +\varepsilon^{3 / 2}  W^{(\varepsilon)}_{\mathcal{L}} (t, x) \;.
   \]
In the whole paper, we need the following assumption: 
\begin{equation*}
	|\nu|\leqslant 1,\qquad \textup{i.e.}\qquad \nu =\cO(1).
\end{equation*}
   
   Defining $\cL_\varepsilon= \cL+\nu \varepsilon^2$,
we can rewrite the mild formulation in a slightly different fashion, that turns out to
be easier to use:
\[
u(t) = e^{t\mathcal{L}_\varepsilon}u_0 + \varepsilon^{3 / 2}  W^{(\varepsilon)}_{\mathcal{L}_\varepsilon}(t).
\]
We also have on the slow scale an approximating amplitude equation (AE)
with a $\C$-valued Wiener process and a solution $A$ in the mild form:
\begin{equation*}
\begin{split}
A (T) & = e^{4 T \partial_x^2 } A_0 + \nu \int_0^T e^{(T - S) 4 \partial_x^2}
   A \d  S + {W}_{4 \partial^2_x} (T)\\
   & = e^{4T(\partial_x^2+\nu)}A_0 + {W}_{4 \partial^2_x+\nu} (T).
\end{split}
\end{equation*} 
So now for the first step we assume initial conditions to be $0$
and aim for the result for the stochastic convolutions only.

\subsection{Approximation Result for Stochastic Convolutions}

Our goal in this section is
\begin{equation*}
 \|\varepsilon^{ 1 / 2} W^{(\varepsilon)}_{\mathcal{L}_{\varepsilon}} (t, x) - [ 
   {W}_{4 \partial_x^2+\nu} (t \varepsilon^2, x \varepsilon) e^{ix} +
   \textrm{c.c}.] \|_{C^0_\gamma}
   \quad \text{is small} 
\end{equation*}
for some small $\gamma>0$. Note that by Lemma \ref{lem:C0infty} it is sufficient
to provide bounds on moments of $C^0([0,T] \times[-L,L])$-norms.

Now we can throw   in the definition of $W_{\mathcal{L}^{(\varepsilon)}_{\varepsilon}}$,
and rescale as in  Lemma \ref{lem:res}.
 We obtain for some suitable convolution operator $\mathcal{F}_\tau^{(\varepsilon)}$ with corresponding kernel $F_\tau^{(\varepsilon)}$ 
\begin{multline*}
W^{(\varepsilon)}_{\mathcal{L}_{\varepsilon}} (T \varepsilon^{- 2}, X \varepsilon^{- 1}) = \int_0^T \mathcal{F}^{(\varepsilon)}_{T-s} \d W(s)e^{i X/ \varepsilon}  + c.c.=
\\
    =\frac{\varepsilon^{- 1 / 2}   }{2 \pi}  \sum_{\ell \in \N} \int_0^T
   \int_{\R} \int_{- \frac1\varepsilon}^{\infty} 
   e^{(T - S)[\nu-(2 + k\varepsilon)^2 k^2]} e^{ik (X - Y)} \d  k e_\ell (Y) \d  Y \d 
   \beta_\ell (S) e^{i X/ \varepsilon}  + \textrm{c.c.}
   \end{multline*}
Thus
\begin{multline*}
  \Big|  \varepsilon^{1 / 2} W^{(\varepsilon)}_{\mathcal{L}_{\varepsilon}} (T\varepsilon^{-2}, X\varepsilon^{-1}) - 
   {W}_{4 \partial_x^2+\nu} (T, X) e^{iX/\varepsilon} \Big| \\ 
   =  \Big| \int_0^T [\mathcal{F}_{T-S}^{(\varepsilon)} - e^{4(T-S)[\partial_X^2+\nu]}  \d W(S)  \Big|
   =  \Big| \int_0^T \cH_{T-S}^{(\varepsilon)}  \d W(S)  \Big|\;.
 \end{multline*} 
In view of Lemma \ref{lem:sumuplem} or Lemma \ref{lem:key2}
we define the convolution operator $\cH_{\tau}^{(\varepsilon)}$ with kernel 
$H_{\tau}^{(\varepsilon)}$ as follows
\[
\begin{split}
  H_{\tau}^{(\varepsilon)} (X) & =  \frac{1}{2 \pi}  \int_{- 1 /
  \varepsilon}^{+ \infty} e^{- \tau (2 + k \varepsilon)^2 k^2 + \tau \nu}
  e^{ikX} \d  k - \frac{1}{2 \pi}  \int_{- \infty}^{+ \infty} e^{- \tau 4
  k^2 + \tau \nu} e^{ikX} \d  k\\
  & =  \frac{1}{2 \pi} e^{\tau \nu}  \Big[ \int_{- \infty}^{+ \infty} [e^{-
  \tau (2 + k \varepsilon)^2 k^2} - e^{- \tau 4 k^2}] e^{ikX} \d  k -
  \int_{- \infty}^{- 1 / \varepsilon} e^{- \tau (2 + k \varepsilon)^2 k^2}
  e^{ikX} \d  k \Big].
\end{split}
\]
Now we take the inverse Fourier Transform and we have (in view of Lemma \ref{lem:sumuplem})
\[
 f (\tau, k) =   e^{\tau \nu} [ e^{- \tau (2 + k \varepsilon)^2 k^2}  - e^{- \tau 4 k^2}]
 - e^{\tau \nu} \chi_{(- \infty, - 1 / \varepsilon)} (k)e^{- \tau (2 + k \varepsilon)^2 k^2} 
\]
Given $\delta \in (0, 1)$ and using the symmetries of the integrals, it is sufficient to consider
 $f$, which is  split into four pieces:
\begin{align}
    f(\tau,k)&=e^{\tau \nu}\chi_{(\delta/\varepsilon,\infty)}(k)e^{-\tau(2+k\varepsilon)^2k^2}\label{eq:fa}\\
&\phantom{=}+
e^{\tau \nu}\chi_{(-1/\varepsilon,-\delta/\varepsilon)}(k)e^{-\tau(2+k\varepsilon)^2k^2}\label{eq:fb}\\
&\phantom{=}+
e^{\tau \nu}\chi_{(-\delta/\varepsilon,\delta/\varepsilon)}(k)(e^{-\tau(2+k\varepsilon)^2k^2}-e^{-\tau4k^2})\label{eq:fc}\\
&\phantom{=}-2 e^{\tau \nu}\chi_{(\delta/\varepsilon,\infty)}(k)e^{-\tau4k^2}\label{eq:fd}\\
& = f_a + f_b + f_c + f_d. \nonumber
\end{align}
Here $f_a$, $f_b$, and $f_d$ are the relatively simple terms that turn out to be small due to fast exponential decay. On the other hand, for $f_c$ we cannot take advantage of exponentially small terms, but we need to rely on the difference being small.

We obtain the following  main Theorem on the approximation of stochastic convolutions.

\begin{thm}\label{thm:M}
For all $T>0$, for all $\kappa >0$, for all $p>0$ and all sufficiently small  $\gamma>0$ 
there is a constant $C>0$ such that
\begin{equation*}
\P(\|\varepsilon^{ 1 / 2} W^{(\varepsilon)}_{\mathcal{L}_{\varepsilon}} (t, x) - [ 
   {W}_{4 \partial_x^2+\nu} (t \varepsilon^2, x \varepsilon) e^{ix} +
   \textrm{c.c}.] \|_{C^0_{\gamma,T\varepsilon^{-2}}}\geqslant\varepsilon^{1-\kappa})\leqslant C \varepsilon^p
\end{equation*}
for all $\varepsilon\in (0,\varepsilon_0)$.
\end{thm}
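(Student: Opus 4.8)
The plan is to collapse the whole statement onto the single stochastic integral $\Phi(T,X)=\int_0^T\cH^{(\varepsilon)}_{T-S}\d W(S)$ isolated above, and then to push the four Fourier kernels $f_a,\dots,f_d$ through the machinery of Section~\ref{sec:techlem}. First I would use the rescaling of Lemma~\ref{lem:res}: writing $D(t,x)$ for the difference inside the norm and $\widetilde D(T,X)$ for its slow-scale form, one has $\widetilde D=\Phi\,e^{iX/\varepsilon}+\mathrm{c.c.}$, hence $|\widetilde D|\leqslant 2|\Phi|$, while the weight inequality behind Lemma~\ref{lem:chsp} (namely $(1+X^2\varepsilon^{-2})^{-\gamma/2}\leqslant(1+X^2)^{-\gamma/2}$) gives $\|D\|_{C^0_{\gamma,T\varepsilon^{-2}}}\leqslant\|\widetilde D\|_{C^0_{\gamma,T}}\leqslant 2\|\Phi\|_{C^0_{\gamma,T}}$. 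Thus it suffices to bound $\Phi$ in the weighted norm, and, via Lemma~\ref{lem:C0infty} together with the equivalence of the $C^0_\gamma$- and $\cC^0_\gamma$-norms, this reduces to producing for every large $p$ a moment estimate $\E\|\Phi\|^p_{C^0([0,T]\times[-L,L])}\leqslant C(L^\gamma\lambda_\varepsilon)^p$ with $\lambda_\varepsilon$ of the order needed to reach the threshold $\varepsilon^{1-\kappa}$.

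Next I would apply the space--time bound of Lemma~\ref{lem:key2}, which reduces the moment estimate to controlling $\|f^{(\varepsilon)}\|_{H^\gamma}$ and $\|f^{(\varepsilon)}\|_{\mathcal{K}_\gamma}$ for $f^{(\varepsilon)}=f_a+f_b+f_c+f_d$. By the triangle inequality in these two norms I would estimate the four pieces one at a time. The tail pieces $f_a,f_b,f_d$ are supported on $|k|\gtrsim\delta/\varepsilon$, where the Gaussian factors decay fast; here one integrates first in $\tau$ and then estimates the resulting $k$-integrals (and their time increments for the $\mathcal{K}_\gamma$-norm), using that $(2+\varepsilon k)^2k^2\geqslant 4k^2$ for $k>0$ on the relevant range.

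The genuinely hard term is $f_c$, supported on the band $|k|<\delta/\varepsilon$, where no exponential decay can be used. The algebraic input is that the two exponents differ by exactly $\tau(4\varepsilon k^3+\varepsilon^2k^4)$, so that $|f_c|\lesssim e^{\tau\nu}\,e^{-c\tau k^2}\,\tau\,(4\varepsilon|k|^3+\varepsilon^2k^4)$; the extra polynomial powers of $k$ are then absorbed by the Gaussian weight after a substitution $k\mapsto k/\sqrt{\tau}$, each such moment integral producing a power of $\varepsilon$ once the band cutoff is accounted for in the small-$\tau$ regime. I expect the main obstacle to be the time-regularity norm $\|f_c\|_{\mathcal{K}_\gamma}$: one must bound $\int_0^R\|f_c(S-R+\tau,\cdot)-f_c(\tau,\cdot)\|^2_{L^2}\d\tau$ by a constant times $|S-R|^{2\gamma}$ uniformly in $\varepsilon$, which amounts to estimating time increments of a difference of Gaussians and forces a careful choice of the cutoff parameter $\delta$ so as to balance the competing contributions of $f_c$ and $f_d$. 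These kernel bounds are exactly the content of the final Sections~\ref{sec:errspace}--\ref{sec:errtime}.

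Finally, collecting the four estimates yields $\lambda_\varepsilon$ of the claimed order, and feeding it back through Lemma~\ref{lem:key2} and Lemma~\ref{lem:C0infty} gives $\P(\|D\|_{C^0_{\gamma,T\varepsilon^{-2}}}\geqslant\varepsilon^{1-\kappa})\leqslant C\sum_{L\in\N}L^{-\gamma p}\,\varepsilon^{\theta p}$ for some $\theta>0$. Here I would exploit the freedom noted after Lemma~\ref{lem:sumuplem}: by Gaussianity $p$ may be taken arbitrarily large with $\gamma>1/p$ kept small, so that $\sum_{L\in\N}L^{-\gamma p}<\infty$ and the factor $\varepsilon^{\theta p}$ dominates any prescribed power $\varepsilon^{p}$, which closes the argument for all $\varepsilon\in(0,\varepsilon_0)$.
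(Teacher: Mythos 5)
Your proposal follows essentially the same route as the paper, whose proof of Theorem~\ref{thm:M} consists of exactly the steps you outline: the rescaling of Lemma~\ref{lem:res} reducing the difference to $\Phi(T)=\int_0^T \cH^{(\varepsilon)}_{T-S}\,\d W(S)$ with kernel $f_a+f_b+f_c+f_d$, the space--time moment bound of Lemma~\ref{lem:key2} fed by the kernel estimates of Sections~\ref{sec:errspace}--\ref{sec:errtime} (your sketches for the tail pieces and for the mean-value-theorem treatment of $f_c$ are precisely Lemmas~\ref{lem:new1} and~\ref{lem:new2}), and finally the union bound of Lemma~\ref{lem:C0infty} with $p$ arbitrarily large and $\gamma>1/p$ small. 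Two small corrections to your description of the kernel estimates: in the paper $\delta\in(0,1)$ stays fixed and the balancing is done through the auxiliary exponents $\mu$ and $\eta$ relative to $\gamma$ (e.g.\ $\eta=2\gamma$, $\mu<1-10\gamma$), not through a choice of $\delta$; and the increment bound for the $\mathcal{K}_\gamma$-norm must carry a positive power of $\varepsilon$ (it is $C\varepsilon^{1-\kappa}|S-R|^{2\gamma}$), not merely be uniform in $\varepsilon$, since otherwise no smallness survives.

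One quantitative caveat, which your closing step inherits from the paper itself: Lemmas~\ref{lem:new1} and~\ref{lem:new2} bound the \emph{squared} norms by $C\varepsilon^{1-\kappa}$, so Lemma~\ref{lem:key2} yields $\lambda_\varepsilon\sim\varepsilon^{(1-\kappa)/2}$, and in Lemma~\ref{lem:C0infty} one gets $\lambda_\varepsilon/K=\varepsilon^{\theta}$ with $\theta>0$ only for thresholds $K$ of order $\varepsilon^{1/2-\kappa}$, whereas for $K=\varepsilon^{1-\kappa}$ with small $\kappa$ one has $\lambda_\varepsilon/K\to\infty$ and the argument does not close. This is not a defect of your route that a better estimate could repair: by the It\^o isometry of Lemma~\ref{lem:ItoIso}, $\E|\Phi(T,X)|^2=\int_0^T\|f(\tau,\cdot)\|^2_{L^2}\,\d\tau$, and the contribution of $f_c$ to this quantity is genuinely of order $\varepsilon$, so the pointwise error is of size $\varepsilon^{1/2}$ and the printed threshold $\varepsilon^{1-\kappa}$ should be read as $\varepsilon^{(1-\kappa)/2}$ (which is still ample for the $\varepsilon^{1/5}$ bound needed in Theorem~\ref{thm:fullmain}). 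Your assertion that ``$\theta>0$ closes the argument'' at the stated threshold should be adjusted accordingly; everything else in your proposal matches the paper's proof.
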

\begin{proof}
Lemma \ref{lem:key2} together with Lemmas  \ref{lem:new1} and \ref{lem:new2} will
provide bounds in $C^0([0,T]\times [-L,L])$.
Then the claim follows by using Lemma \ref{lem:C0infty}.
\end{proof}
Note that as we want to have the result for all small $\gamma>0$,
we state here the result in $C^0_{\gamma}$ instead of $C^0_{2\gamma}$ as in Lemma \ref{lem:C0infty}.


\subsection{Attractivity for deterministic initial conditions}

This section should motivate, why we assume in the full approximation result 
that the initial condition is a modulated wave that is split in a Gaussian with 
bounds on the covariance operator, that would not allow for the existence 
a derivative, and a more regular part in $ H^1_{\ell, u} $.

We fix a time 
\begin{equation*}
 t_\varepsilon = \mathcal{O}(\varepsilon^{-2})
\end{equation*}
and try to approximate $e^{t_{\varepsilon} \mathcal{L}_{\varepsilon}} u_0$ 
by a modulated wave. It turns out that we obtain  a more regular part 
of the amplitude, and a Gaussian part, that is only in $C^0_\gamma$.  

\begin{multline*}
  \| e^{t_{\varepsilon} \mathcal{L}_{\varepsilon}} u_0 + \varepsilon^{1 / 2}
  W_{\mathcal{L}_{\varepsilon}}^{(\varepsilon)} (t_{\varepsilon}) - (A_{\det}
  (\varepsilon x) e^{ix} + A_{\textrm{st}} (\varepsilon x) e^{ix} + \textrm{c.c.})
  \|  \leqslant\\ \leqslant  \| e^{t_{\varepsilon} \mathcal{L}_{\varepsilon}} u_0 -
  (A_{\det} (\varepsilon x) e^{ix} + \textrm{c.c.}) \|_{C_{\gamma}^0}
   + \| \varepsilon^{1 / 2} W_{\mathcal{L}_{\varepsilon}}^{(\varepsilon)}
  (t_{\varepsilon}) - (A_{\textrm{st}} (\varepsilon x) e^{ix} + \textrm{c.c.})
  \|_{C_{\gamma}^0} .
\end{multline*}
For the first (deterministic) term we use the  deterministic
attractivity theorem, which we state just as an assumption.
See  
~\cite{GS96} for a result in $H^1_{\ell, u}$, which, together with the embedding 
$H^1_{\ell, u} \hookrightarrow C_{\gamma}^0$ proved in Subsection~\ref{sec:localH1}, shows that the following assumption is at least true for all $u_0 \in H^1_{\ell, u}$. 

\begin{assu}
Suppose that for the initial condition $u_0$ there is a smooth function $A_{\det}$ such that 
\[
 \| e^{t_{\varepsilon} \mathcal{L}_{\varepsilon}} u_0 -
  (A_{\det} (\varepsilon x) e^{ix} + \textrm{c.c.}) \|_{C_{\gamma}^0}
  \quad\textrm{is small.}
\]
\end{assu}
For the second, the stochastic term, we use our approximation result of  Theorem \ref{thm:M} for the
stochastic convolution to show that this is small. 
Thus we need to define $A_{\textrm{st}}$ as 
\[ A_{\textrm{st}} (X) = W_{4 \partial^2_X} (\varepsilon^2 t_{\varepsilon}, X)
   \sim \mathcal{N} \Big( 0, \int_0^{\varepsilon^2 t_{\varepsilon}} e^{8 s
   \partial_X^2} \d  s \Big) =\mathcal{N} (0, Q), \]
so we have a covariance operator $Q$ with Fourier-symbol $q$ given by  
\[ q ( {k})
= \int_0^{\varepsilon^2 t_{\varepsilon}} e^{- 8 sk^2} \d  s
   = \frac{1 - e^{- 8 k^2 t_{\varepsilon} \varepsilon^2}}{8 k^2} .
   \]
Direct calculation gives the following estimate on $q ({k})$: there exists some $\delta_0>0$ such that
\begin{equation}
  \label{eq:starq} 
  q ({k}) \leqslant C \left\{ \begin{array}{ll}
    1 \hspace*{\fill} \, & \textrm{if }\ | k | \leqslant
    \delta_0 \\
    \frac{1}{k^2} & \text{if }\ | k | \geqslant \delta_0
  \end{array} \right\} 
  = C \min\{ 1,\ k^{-2} \}
\end{equation}
This will be needed as an assumption for the full approximation result.

%
\subsection{Full approximation}

In this section, we will prove the general approximation result for 
initial conditions that are almost a modulated wave. To be more precise, we assume
\begin{assu}
\label{a:ICsplit}
Consider for some sufficiently small $\frac12>\kappa>0$ the splitting
 \[ u_0(x) = A_{\det} (\varepsilon x) e^{ix} + A_{\textrm{st}} (\varepsilon x)
   e^{ix}  + \textrm{c.c.} + \varepsilon^{1-\kappa} E \;,
\]
where we assume for some sufficiently small $\kappa>\gamma>0$
 the following:
\[ \|E\|_{C^0_\gamma} \leqslant C, \qquad   \|A_{\det}\|_{H_{\ell, u}^1} \leqslant C , 
\qquad  \hspace*{\fill} 
A_{\textrm{st}} \sim \mathcal{N}(0, Q) \;,
   \]
where the $Q$ is such that~\eqref{eq:starq} holds.
\end{assu}
Let's define for ease of notation $A_0 = A_{\det} + A_{\textrm{st}}$. 
Again, for ease of notation's  we define the solution $u$ and the approximation $u_A$ as
\[ 
\begin{split}
u (t, \cdot) 
 & = e^{t \mathcal{L}_{\varepsilon}} u_0 +
   \varepsilon^{1 / 2} W_{\mathcal{L}_{\varepsilon}}^{(\varepsilon)} (t)\\
    u_A (t, x) &= [e^{4 t \varepsilon^2 \partial_X^2} A_0]  
   (\varepsilon x)e^{ix} + W_{4 \partial_x^2} (t \varepsilon^2, \varepsilon x) e^{ix} +
     \textrm{c.c.}
\end{split}      
\]
For the deterministic approximation result we use the following Assumption.
For a full deterministic approximation result in the space $H^1_{\ell,u}$ 
see for example~\cite{GS96}.
\begin{assu}
\label{a:detA}
Let's define 
\begin{equation*}
\mathcal{E}_\textrm{det}^{(\varepsilon)}=\sup_{t\in[0,T_0 \varepsilon^{-2}]}\|e^{t\mathcal{L}_{\varepsilon}} [A_{\textrm{det}} (\varepsilon x) e^{ix}] - [e^{4\partial_X^2\varepsilon^2t} A_{\textrm{det}}](\varepsilon x) e^{ix}]\|_{C_{\gamma}^0}.
\end{equation*}
We assume that this is a good approximation for the deterministic part, i.e. 
\begin{equation*}
\mathcal{E}_\textrm{det}^{(\varepsilon)}\to 0 \qquad \textrm{for}\ \varepsilon\to 0.
\end{equation*}
\end{assu}
\begin{thm}[Approximation]
\label{thm:fullmain}
Under Assumptions \ref{a:ICsplit} and \ref{a:detA}, for all $\kappa>0$ and $\gamma \in(0,\kappa)$ both sufficiently small 
and for all $T_0 > 0$ and $p>1$ there is a constant $C>0$ such that 
  \[ \P \Big(\sup_{[0, T_0 \varepsilon^{- 2}]} \| u - u_A
     \|_{C_{\gamma}^0} \leqslant  \mathcal{E}_\textrm{det}^{(\varepsilon)} + \varepsilon^{1/5}\Big) \geqslant 1 - C_p
     \varepsilon^p \]
     for all $\varepsilon\in(0,1)$.
\end{thm}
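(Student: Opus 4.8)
The plan is to subtract $u_A$ from $u$ term by term, inserting the splitting of $u_0$ from Assumption~\ref{a:ICsplit} and using the linearity of $e^{t\mathcal{L}_\varepsilon}$, and then to group the outcome into four contributions that are estimated separately before being recombined by the triangle inequality in $C^0_\gamma$. Writing $A_0=A_{\det}+A_{\textrm{st}}$, one finds $u-u_A=\mathrm{(I)}+\mathrm{(II)}+\mathrm{(III)}+\mathrm{(IV)}$, where (I) is the \emph{deterministic} modulated-wave error $e^{t\mathcal{L}_\varepsilon}(A_{\det}(\varepsilon\,\cdot)e^{i\cdot})-(e^{4t\varepsilon^2\partial_X^2}A_{\det})(\varepsilon\,\cdot)e^{i\cdot}+\textrm{c.c.}$, term (II) is the \emph{stochastic convolution} error $\varepsilon^{1/2}W^{(\varepsilon)}_{\mathcal{L}_\varepsilon}(t)-(W_{4\partial_x^2}(t\varepsilon^2,\varepsilon\,\cdot)e^{i\cdot}+\textrm{c.c.})$, term (III) is the \emph{Gaussian initial-condition} error $e^{t\mathcal{L}_\varepsilon}(A_{\textrm{st}}(\varepsilon\,\cdot)e^{i\cdot})-(e^{4t\varepsilon^2\partial_X^2}A_{\textrm{st}})(\varepsilon\,\cdot)e^{i\cdot}+\textrm{c.c.}$, and (IV) is the remainder $\varepsilon^{1-\kappa}e^{t\mathcal{L}_\varepsilon}E$.

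The two ``easy'' contributions are (I) and (IV). Term (I) is purely deterministic and is bounded by $\mathcal{E}_{\textrm{det}}^{(\varepsilon)}$ directly via Assumption~\ref{a:detA}. For (IV) I would invoke the semigroup bound of Lemma~\ref{lem:bSG}; since $|\nu|\leqslant1$ the factor $e^{t\nu\varepsilon^2}$ stays bounded by $e^{|\nu|T_0}$ on $t\leqslant T_0\varepsilon^{-2}$, so $\|e^{t\mathcal{L}_\varepsilon}E\|_{C^0_\gamma}\leqslant C\max\{1,t^{\gamma/2}\}\|E\|_{C^0_\gamma}\leqslant C\varepsilon^{-\gamma}$, whence $\|\mathrm{(IV)}\|_{C^0_\gamma}\leqslant C\varepsilon^{1-\kappa-\gamma}$. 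Choosing $\kappa,\gamma$ small enough that $\kappa+\gamma\leqslant 4/5$ makes this at most $C\varepsilon^{1/5}$, almost surely.

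For the two genuinely stochastic contributions I reuse the machinery of Section~\ref{sec:techlem}. Term (II) is exactly the quantity controlled by Theorem~\ref{thm:M}: up to renaming the weight $\gamma$, it gives $\|\mathrm{(II)}\|_{C^0_{\gamma,T_0\varepsilon^{-2}}}\leqslant\varepsilon^{1-\kappa}\leqslant\varepsilon^{1/5}$ off an event of probability at most $C\varepsilon^p$. Term (III) is the new point and, I expect, the main obstacle. Here I would first rescale $e^{t\mathcal{L}_\varepsilon}(A_{\textrm{st}}(\varepsilon\,\cdot)e^{i\cdot})$ to the slow scale exactly as in Lemma~\ref{lem:res}, so that (III) becomes $\mathcal{H}^{(\varepsilon)}_{t\varepsilon^2}A_{\textrm{st}}$ for a convolution operator whose Fourier kernel $f$ is the same band-difference kernel appearing in~\eqref{eq:fa}--\eqref{eq:fd}, but now evaluated at the single slow time $t\varepsilon^2\in[0,T_0]$ instead of integrated against $\d W$. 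The covariance of $A_{\textrm{st}}$ is the multiplier $q$ of~\eqref{eq:starq}, so Lemma~\ref{lem:GausIC} reduces everything to bounding $\|f\|_{\mathcal{L}^\infty_Q}$; the crucial feature is that the weight $q(k)\leqslant C\min\{1,k^{-2}\}$ supplies exactly the decay in $k$ needed to tame the band near $|k|\approx1/\varepsilon$, even though $A_{\textrm{st}}$ is too rough to possess a spatial derivative and hence cannot be handled by deterministic means. The explicit kernel bounds (with the free parameter $\delta$ chosen to balance the exponentially small pieces $f_a,f_b,f_d$ against the difference piece $f_c$) are what Section~\ref{sec:errgauss} supplies, and through Lemma~\ref{lem:GausIC} they yield $\E\|\mathrm{(III)}\|^p_{C^0([0,T_0]\times[-L,L])}\leqslant C(L^\gamma\varepsilon^{1/5})^p$.

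Finally, I would pass from these fixed-$L$ moment bounds on (II) and (III) to bounds on the weighted norm via Lemma~\ref{lem:C0infty} (taking $p>1/\gamma$ so that $\sum_{L\in\N}L^{-\gamma p}<\infty$), and then use the norm equivalence and the embeddings $C^0_\rho\subset C^0_\gamma$ to land in the space of the statement. Summing the four estimates by the triangle inequality and taking a union bound over the two exceptional events yields, with probability at least $1-C_p\varepsilon^p$, the claimed bound $\sup_{[0,T_0\varepsilon^{-2}]}\|u-u_A\|_{C^0_\gamma}\leqslant\mathcal{E}_{\textrm{det}}^{(\varepsilon)}+\varepsilon^{1/5}$, where $\varepsilon^{1/5}$ absorbs the several better rates. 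The delicate step is the kernel estimate for (III): unlike the convolution term it is frozen at one slow time and weighted only by $q$, so the band near $|k|\approx1/\varepsilon$ cannot be integrated away in time and must instead be controlled through the $k^{-2}$ decay of $q$ — which is precisely why Assumption~\ref{a:ICsplit} constrains the covariance of $A_{\textrm{st}}$ via~\eqref{eq:starq}.
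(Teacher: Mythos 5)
Your proposal is correct and follows essentially the same route as the paper's proof: the same four-term decomposition of $u-u_A$, with Assumption \ref{a:detA} for the deterministic part, Lemma \ref{lem:bSG} for the remainder $E$, Theorem \ref{thm:M} for the difference of stochastic convolutions, and the Gaussian initial-condition term reduced to a fixed-slow-time convolution kernel weighted by $q$ and handled via Lemma \ref{lem:GausIC}, the estimates of Section \ref{sec:errgauss}, and Lemma \ref{lem:C0infty}, followed by a union bound. The only slip is cosmetic: the rescaling of $e^{t\mathcal{L}_\varepsilon}(A_{\textrm{st}}(\varepsilon\,\cdot)e^{i\cdot})$ is the deterministic Lemma \ref{lem:rewriting} (with kernel $\tilde f$, equal to $f$ up to exponential-tail remainders), not the Wiener-process rescaling of Lemma \ref{lem:res}.
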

We do not claim that the bound by $\varepsilon^{1/5}$ is optimal, 
but it seems that substantial improvements will be significantly more technical.
\begin{proof}
Using Assumption \ref{a:ICsplit} there are four terms in $u-u_A$,
which we need to bound. There are three from the splitting 
in $E$,  $A_{\det}$, and  $A_{\textrm{st}}$ and a fourth one  
from the difference of the stochastic convolution.
We proceed in several steps. 
  
First for the term with $E$ we show by Lemma \ref{lem:bSG} for all $t\geqslant 0$ that 
  \[ 
  \| e^{t\mathcal{L}_{\varepsilon}} \varepsilon^{1-\kappa} E  \|_{C_{\gamma}^0} 
  \leqslant C \varepsilon^{1 - \kappa - \gamma/2}  \| E \|_{C_{\gamma}^0} \;.
  \]
Secondly,  we approximate the difference of the two stochastic convolutions using Theorem \ref{thm:M}.
Thus 
\[
\|\varepsilon^{ 1 / 2} W^{(\varepsilon)}_{\mathcal{L}_{\varepsilon}} (\varepsilon^{-2}T, \varepsilon^{-1}T) - [ 
   {W}_{4 \partial_x^2+\nu} (T, X) e^{iX/\varepsilon} +
   \textrm{c.c.}] \|_{C^0_{\gamma,T}}\leqslant\varepsilon^{1-\kappa}
\]
with probability $\cO(\varepsilon^p)$ for all $p>1$.
 
Finally, for the remaining last two terms including $A_0$, 
we first
use Assumption \ref{a:detA} for the term containing $A_{\det}$.

For the final term containing 
$A_{\textrm{st}}$, we 
rewrite 
using a simple rescaling stated in Lemma \ref{lem:rewriting} below
\[ e^{t\mathcal{L}_{\varepsilon}} (A_{\textrm{st}} (\varepsilon x) e^{ix}) 
= [e^{4\partial_X^2\varepsilon^2t} A_{\textrm{st}}](\varepsilon x) e^{ix}
+ \tilde{\mathcal{H}}_{\varepsilon^2 t} A_{\textrm{st}} (\varepsilon x) e^{ix}\;.
\]
Thus it remains to bound 
\[
\sup_{T\in [0,T_0]}\| \tilde{\mathcal{H}}_{T} A_{\textrm{st}} (\varepsilon x)e^{ix}\|_{C^0_\gamma} 
\leq \sup_{[0,T_0]}\| \tilde{\mathcal{H}}_{T} A_{\textrm{st}} (\varepsilon \cdot)\|_{C^0_\gamma}
\leq \sup_{[0,T_0]}\| \tilde{\mathcal{H}}_{T} A_{\textrm{st}} \|_{C^0_\gamma}\;,
\]
where we used Lemma \ref{lem:chsp}. Now we proceed with Lemma \ref{lem:GausIC} and  Lemma \ref{lem:C0infty}
that gives
\[  \E \sup_{[0,T_0]}\| \tilde{\mathcal{H}}_{T} A_{\textrm{st}} \|_{C^0_\gamma}^p
\leq C \|\tilde{f}\|^p_{\cL_Q^\infty}\;,
\]
which is small as argued below in Section \ref{sec:errgauss}. Note that in view of Lemma \ref{lem:C0infty}
we need here the ${\cL_Q^\infty}$-norm
for $\gamma/2$ and not $\gamma$, but we bound it for any $\gamma>0$ later anyway.

\end{proof}
For the terms involving the initial conditions we used the following rescaling lemma.
\begin{lemma}\label{lem:rewriting}
The following holds:
\[
e^{T\varepsilon^-2\cL_\varepsilon}[A(\varepsilon x)e^{ix}]-[e^{(4\partial_X^2+\nu) T}A](\varepsilon x)e^{i x} 
= \tilde{\cH}_T A (\varepsilon x) e^{ix}
\]
with $\tilde{\cH}_T$ having kernel 
\[
\tilde{f}(T,\ell)= e^{-T(2+\varepsilon\ell)^2\ell^2+\nu T} - e^{-4T\ell^2}\;. 
\] 
\end{lemma}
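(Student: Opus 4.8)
The plan is to verify the identity directly in Fourier space, exploiting that both semigroups act as Fourier multipliers and that the spatial rescaling $X=\varepsilon x$ interacts cleanly with plane waves. First I would write $A$ through its Fourier transform, $A(X)=\int_\R \hat A(\ell)e^{i\ell X}\d\ell$, so that the modulated initial datum becomes a superposition of plane waves centred at wave-number one,
\[
A(\varepsilon x)e^{ix}=\int_\R \hat A(\ell)\,e^{i(1+\varepsilon\ell)x}\d\ell .
\]
Each mode $e^{ikx}$ with $k=1+\varepsilon\ell$ is an eigenfunction of $\cL_\varepsilon$, so applying $e^{T\varepsilon^{-2}\cL_\varepsilon}$ simply multiplies it by $e^{-T\varepsilon^{-2}(1-k^2)^2+T\varepsilon^{-2}\nu\varepsilon^2}$.

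The key algebraic step is the expansion $1-(1+\varepsilon\ell)^2=-\varepsilon\ell(2+\varepsilon\ell)$, from which $(1-k^2)^2=\varepsilon^2\ell^2(2+\varepsilon\ell)^2$, so that the prefactor $\varepsilon^{-2}$ in the time-scaling cancels exactly against this $\varepsilon^2$. This is precisely the scaling balance that produces the effective operator $4\partial_X^2$ in the limit, since $(2+\varepsilon\ell)^2\to 4$ as $\varepsilon\to 0$. I would thus arrive at
\[
e^{T\varepsilon^{-2}\cL_\varepsilon}[A(\varepsilon x)e^{ix}]
= e^{ix}\int_\R \hat A(\ell)\,e^{-T(2+\varepsilon\ell)^2\ell^2+\nu T}\,e^{i\ell\varepsilon x}\d\ell ,
\]
which, reading $e^{i\ell\varepsilon x}=e^{i\ell X}\big|_{X=\varepsilon x}$, is a function of $X=\varepsilon x$ modulated by $e^{ix}$.

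For the comparison term I would compute the multiplier of $e^{(4\partial_X^2+\nu)T}$ acting on $e^{i\ell X}$, namely $e^{-4\ell^2T+\nu T}$, giving $[e^{(4\partial_X^2+\nu)T}A](\varepsilon x)e^{ix}=e^{ix}\int_\R\hat A(\ell)\,e^{-4\ell^2T+\nu T}\,e^{i\ell\varepsilon x}\d\ell$. Subtracting the two expressions leaves a single integral whose symbol is the difference of the two multipliers, which factors as
\[
\tilde f(T,\ell)=e^{\nu T}\big[e^{-T(2+\varepsilon\ell)^2\ell^2}-e^{-4T\ell^2}\big],
\]
and this is by definition the convolution $\tilde{\cH}_T A$ evaluated at $X=\varepsilon x$ and modulated by $e^{ix}$, matching the claimed kernel. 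There is no genuine analytic obstacle here; the computation is entirely algebraic. The only points requiring care are tracking the substitution $X=\varepsilon x$ consistently through both terms and verifying that the $\ell$-independent factor $e^{\nu T}$, coming from the $\nu\varepsilon^2$ shift in $\cL_\varepsilon$ together with the time rescaling by $\varepsilon^{-2}$, is carried identically in both multipliers so that it survives as a common prefactor of $\tilde f$.
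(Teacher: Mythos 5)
Your proof is correct and takes essentially the same route as the paper's: expand $A$ in Fourier modes, use that each modulated wave $e^{i(1+\varepsilon\ell)x}$ is an eigenfunction of $\cL_\varepsilon$ with $\bigl(1-(1+\varepsilon\ell)^2\bigr)^2=\varepsilon^2\ell^2(2+\varepsilon\ell)^2$ cancelling the $\varepsilon^{-2}$ time rescaling, and subtract the two multipliers. One remark: the kernel you derive, $\tilde f(T,\ell)=e^{\nu T}\bigl[e^{-T(2+\varepsilon\ell)^2\ell^2}-e^{-4T\ell^2}\bigr]$, is actually the correct one --- the lemma's stated kernel (and the second display of the paper's proof) omits the factor $e^{\nu T}$ on the term $e^{-4T\ell^2}$, a typo confirmed by the subsequent remark that $f-\tilde f$ should be supported only in the exponential tail $k<-1/\varepsilon$, which holds for your version but not for the stated one.
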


\begin{proof}
We have that
\begin{align*}
 e^{T\varepsilon^-2\cL_\varepsilon}[A(\varepsilon x)e^{ix}] 
&= \int_\R   e^{-T[(1 -k^2)^2-\nu\varepsilon^2]}\frac1\varepsilon\widehat{A}((k-1)/\varepsilon)  \d  k \\
&= \int_\R  e^{-T[\ell^2(2+\ell\varepsilon)^2-\nu]}\widehat{A}(\ell) e^{i\ell\varepsilon x} \d  \ell \cdot e^{ix}.
\end{align*}
Moreover, 
\[
[e^{(4\partial_X^2+\nu) T}A](\varepsilon x) \cdot e^{i x} 
=  \int_\R  e^{-4 T k^2}\widehat{A}(\ell) e^{ik\varepsilon x} \d  \ell \cdot e^{ix}\;.
\]
\end{proof}

\begin{remark}
Note that  $\tilde{\cH}_T=\cH_T+R_T$, where $\cH_T$ has kernel $f$, 
the one we already introduced and studied in detail and an additional 
remainder $R_T$ which has kernel $f-\tilde{f}$. 
In particular this kernel has only parts in the exponential tail, so the error is easily bounded.

The error terms come from the fact that we do not cut in 0,
but go further left to $-1/\varepsilon$. 
Also, we have contributions coming from the complex conjugate, but they do not cancel out.
\end{remark}

\section{Error estimates in space}
\label{sec:errspace}

This section will provide the technical bounds on $f_a$, $f_b$, $f_c$, and $f_d$  
defined in~\eqref{eq:fa}--\eqref{eq:fd} in the $L^2(H^\gamma)$-norm.
This is crucial for applying Lemma~\ref{lem:key2} in the proof of Theorem~\ref{thm:M}.
These are all direct estimates that do not rely on any other result.

Let us  first remark that again we can bound separately the contribution with weight $k^{2\gamma}$ and with 1. 
Moreover, we can preform the computation for $k^{2\gamma}$ only, and then consider the case $\gamma = 0$
 to treat the 1. 

We use the following observations. For $|\nu|\leqslant 1$ and $\varepsilon k\geqslant \delta$ we have
\[
|f_a|\leqslant e^{\tau \nu}e^{-\tau(2+\delta)^2k^2}\leqslant Ce^{-\tau 4 k^2},
\qquad\text{and}\qquad  |f_d| \leqslant C e^{-\tau4k^2}.
\]
Thus we obtain
\begin{equation}\allowdisplaybreaks
  \label{eq:fafd}
  \begin{split}
  \lefteqn{  \int_0^T\int_\R |f_a+f_d|^2 |k|^{2\gamma}\d k \d\tau 
     \leqslant  C\int_0^T \int_{\delta/\varepsilon}^\infty  e^{-8\tau k^2} k^{2\gamma}\d k \d\tau}\\
 &\leqslant C\int_0^T \int_{\delta/\varepsilon}^\infty  k^{2\gamma}e^{-4\tau k^2} \d k\;  e^{-4\tau \delta^2/\varepsilon^2} \d\tau\\
&\leqslant C \int_0^T e^{-4\tau  \delta^2\varepsilon^{-2}}\tau^{-(2\gamma+1)/2}\d\tau
= C\int_0^{T\varepsilon^{-2}} (\varepsilon^2\sigma)^{-(2\gamma+1)/2}e^{-4\sigma \delta^2} \varepsilon^2 \d\sigma\\
&\leqslant C \varepsilon^{1-2\gamma}\int_0^\infty\sigma^{-(2\gamma+1)/2}e^{-\sigma 4 \delta^2} \d\sigma
= C \varepsilon^{1-2\gamma} \delta^{(2\gamma-1)} \;,
    \end{split}
\end{equation}
where, in order to be able to integrate in $\tau$, we must take $-\gamma-1/2>-1$, that is $\gamma < 1/2$.

By the same estimates we can bound the contribution of the term $f_b$.
\begin{equation*}\allowdisplaybreaks
    \int_0^T\int_\R|f_b|^2 |k|^{2\gamma} \d k \d\tau =
    \int_0^T\int_{-1/\varepsilon}^{-\delta/\varepsilon}|f_b|^2
    |k|^{2\gamma} \d k \d\tau
  \leqslant  C \int_0^T \int_{\delta/\varepsilon}^{\infty}e^{-8\tau k^2}
     k^{2\gamma}\d k \d\tau\;.
\end{equation*}
Now we turn to the complicated term $f_c$.
By using the mean value theorem
we derive
\begin{equation*}
  \begin{split}
    e^{-\tau(2+k\varepsilon)^2k^2}-e^{-\tau4k^2} = -\tau
    e^{-\tau\xi}[(2+k\varepsilon)^2-4]k^2\\
= -\tau e^{-\tau\xi}\varepsilon k k^2(4+k\varepsilon)
  \end{split}
\end{equation*}
with $\xi$ taking value in $[4k^2,4k^2(1+k\varepsilon/2)^2]$, with the
additional condition, given by the indicator function, that $k\in
(-\delta/\varepsilon,\delta/\varepsilon)$. So the extremes of the interval
for $\xi$ are actually 
\[
\Big[\Big(1-\frac{\delta}{2}\Big)^2  4 \frac{\delta^2}{\varepsilon^2},4
\frac{\delta^2}{\varepsilon^2}\Big]
\quad\text{or}\quad 
\Big[4 \frac{\delta^2}{\varepsilon^2}, \Big(1+\frac{\delta}{2}\Big)^2  4 \frac{\delta^2}{\varepsilon^2}\Big]
\]
depending on $k$ being negative or positive, respectively. We are
interested in the absolute value of $f$, so we have, as $\tau \in[0,T]$,
\begin{equation}
  \label{eq:boundfcabs}
  \begin{split}
    |f_c| & 
    \leqslant \chi_{(-\delta/\varepsilon,\delta/\varepsilon)}e^{\nu \tau}\varepsilon \tau k^3(4+k\varepsilon) e^{-\tau
      (1-\frac{\delta}{2})^24k^2}\\
 &\leqslant \chi_{(-\delta/\varepsilon,\delta/\varepsilon)} C \varepsilon \tau k^3 e^{-\tau (1-\frac{\delta}{2})^2 4k^2}\\
 &\leqslant \chi_{(-\delta/\varepsilon,\delta/\varepsilon)} C \varepsilon \tau k^3 e^{-\tau 4k^2}\;.
  \end{split}
\end{equation}
Let's now fix $0<\mu<1/2-\gamma$, in order to obtain
\begin{equation}\allowdisplaybreaks
  \label{eq:intboundfc}
  \begin{split}
    \int_0^T\int_\R|f_c|^2|k|^{2\gamma}\d k \d\tau &=
    \int_0^T\int_{-\delta/\varepsilon}^{\delta/\varepsilon}|f_c|^2|k|^{2\gamma}\d k \d\tau\\
&\leqslant C \int_0^T \tau^2 \int_{-\delta/\varepsilon}^{\delta/\varepsilon}\varepsilon^2 |k|^{6+2\gamma}e^{-8\tau
      k^2}\d k \d\tau\\
&\leqslant C \varepsilon^2 \int_0^T \tau^2 \int_{0}^{\delta/\varepsilon}
k^{2-\mu} k^{4+2\gamma+\mu}e^{-8\tau
      k^2}\d k \d\tau\\
&\leqslant C \varepsilon^\mu \int_0^T \tau^2 \int_{0}^{\infty} k^{4+2\gamma+\mu}e^{-8\tau
      k^2}\d k \d\tau\\
&\leqslant C \varepsilon^\mu \int_0^T \tau^{-1/2-\gamma-\mu/2} \d\tau.
  \end{split}
\end{equation}
\begin{remark} In the case $\nu<0$ here and later many terms can be bounded independently of time, 
While for $\nu>0$ our constants usually depends exponentially on $T$.
\end{remark}

So if we now put all contributions together we get the following bound:
\begin{equation*}
\|f\|^2_{L^2(H^\gamma)}\leqslant C[ \varepsilon^{1 - 2 \gamma} +\varepsilon + \varepsilon^\mu],
\end{equation*}
where we need $\gamma < 1 / 2$ and $\mu + 2\gamma <1$.

\begin{lemma}
\label{lem:new1}
For all $T>0$ and all $\kappa>0$, there exist $\varepsilon_0>0$, $\gamma_0>0$, and $C>0$ such that 
\begin{equation*}
\|f\|^2_{L^2(H^\gamma)}\leqslant C \varepsilon^{1-\kappa},
\end{equation*}
for all $\gamma\in (0,\gamma_0)$ and   all $\varepsilon\in (0,\varepsilon_0)$.
\end{lemma}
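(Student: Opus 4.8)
The plan is to observe that all of the analytic work has already been carried out in the four estimates above, so that only an optimisation of the free exponents remains. Collecting the bounds on $f_a$, $f_b$, $f_d$ from~\eqref{eq:fafd} and on $f_c$ from~\eqref{eq:intboundfc}, together with their weight-$1$ counterparts obtained by setting $\gamma=0$ (which turn the factor $\varepsilon^{1-2\gamma}$ into $\varepsilon$ and leave $f_c$ at $\varepsilon^\mu$), one arrives at
\[
\|f\|^2_{L^2(H^\gamma)}\leqslant C\big[\varepsilon^{1-2\gamma}+\varepsilon+\varepsilon^\mu\big],
\]
valid whenever $\gamma<\tfrac12$ and $\mu+2\gamma<1$, the constant $\delta\in(0,1)$ being fixed and absorbed into $C$ (the factor $\delta^{2\gamma-1}$ coming from~\eqref{eq:fafd} is harmless once $\delta$ and $\gamma$ are frozen). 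The whole task is thus reduced to choosing $\gamma$ and $\mu$ so that each of the three exponents $1-2\gamma$, $1$ and $\mu$ is at least $1-\kappa$.

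First I would note that it suffices to treat small $\kappa$, since for $\varepsilon\in(0,1)$ the target $\varepsilon^{1-\kappa}$ only grows as $\kappa$ increases, so the estimate for small $\kappa$ implies it for every larger one; thus assume $0<\kappa<1$. I would then set $\gamma_0=\min\{\kappa/2,\tfrac12\}$ and fix any $\gamma\in(0,\gamma_0)$. This gives $1-2\gamma>1-\kappa$ immediately and, crucially, opens up the window $[1-\kappa,\,1-2\gamma)$ for $\mu$, which is nonempty precisely because $2\gamma<\kappa$. Choosing $\mu$ in this interval yields simultaneously $\mu\geqslant 1-\kappa$ and $\mu+2\gamma<1$, so all admissibility constraints are respected.

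With these choices every exponent $a$ appearing on the right-hand side satisfies $a\geqslant 1-\kappa$, and since we may take $\varepsilon_0\leqslant 1$ we have $\varepsilon^a\leqslant\varepsilon^{1-\kappa}$ for all $\varepsilon\in(0,\varepsilon_0)$. Summing the three contributions then gives $\|f\|^2_{L^2(H^\gamma)}\leqslant 3C\varepsilon^{1-\kappa}$, which is the claim after renaming the constant.

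The only genuinely delicate point, already hidden in the preparatory estimates, is that the contribution of $f_c$ is the binding one: its exponent $\mu$ is forced to stay strictly below $1-2\gamma$ by the integrability of $\tau^{-1/2-\gamma-\mu/2}$ at $\tau=0$ in~\eqref{eq:intboundfc}. This is why one cannot reach the exponent $1$ exactly but can approach it arbitrarily closely, and it is exactly what dictates the loss $\kappa$ in the statement. I therefore expect the one place requiring care to be the verification that the interval for $\mu$ is nonempty, i.e.\ the compatibility $2\gamma<\kappa$ secured by $\gamma<\kappa/2$; everything else is routine bookkeeping of exponents.
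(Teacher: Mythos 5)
Your proposal is correct and follows essentially the same route as the paper: Lemma~\ref{lem:new1} is stated there without a separate proof precisely because it summarises the bound $\|f\|^2_{L^2(H^\gamma)}\leqslant C[\varepsilon^{1-2\gamma}+\varepsilon+\varepsilon^{\mu}]$ from~\eqref{eq:fafd}--\eqref{eq:intboundfc} under the constraints $\gamma<\sfrac12$ and $\mu+2\gamma<1$, and your choice $\gamma_0=\min\{\kappa/2,\sfrac12\}$, $\mu\in[1-\kappa,\,1-2\gamma)$ is exactly the exponent bookkeeping the authors leave implicit. You also correctly identify the integrability of $\tau^{-1/2-\gamma-\mu/2}$ at $\tau=0$ in~\eqref{eq:intboundfc} as the binding constraint forcing $\mu<1-2\gamma$ (note the paper's earlier ``fix $0<\mu<1/2-\gamma$'' is evidently a slip for $\mu/2<1/2-\gamma$, consistent with its own concluding condition $\mu+2\gamma<1$).
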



\section{Error estimates -- 2: Time }
\label{sec:errtime}


In Lemma \ref{lem:key2} we provided a bound in terms of the norm $ \|f\|_{\mathcal{K}_\gamma}^2$, which is defined as
\begin{multline*}
\|f\|_{\mathcal{K}_\gamma}^2
 = \sup_{S\in[0,T]} S^{-2\gamma} \int_0^S \|f(\tau,\cdot)\|^2_{L^2} \d\tau +\\
 +   \sup_{0\leqslant R \leqslant S \leqslant T} (S-R)^{-2\gamma}\int_0^R \|f(S-R+\tau,\cdot)-f(\tau,\cdot)\|^2_{L^2} \d\tau \;.
\end{multline*} 
Now we evaluate that explicitly.

The first term is easily bounded as in Section \ref{sec:errspace}. We obtain the following bound for $\gamma$ is close to $0$:
\begin{equation}\label{e:spaceintime}
\sup_{S\in [0,T]} S^{-2\gamma} \int_0^{S}\|f(\tau,\cdot)\|^2_{L^2} \d\tau \leqslant  C \varepsilon^{1-4\gamma} \;.
\end{equation}
The term $S^{-2\gamma}$ did not appear in the estimates in Section~\ref{sec:errspace}, but is easily controlled. 
We comment on the proof in more detail below.
When we are considering $f_c$ there was already a term in $S$ showing, and we have from~\eqref{eq:intboundfc}:
\begin{equation*}
\sup_{S\in [0,T]} S^{-2\gamma} \varepsilon^\mu \int_0^{S} \tau^{-1/2-\mu/2}\d \tau \leqslant \varepsilon^\mu \sup_{S\in [0,T]} S^{1/2-\mu/2-2\gamma},
\end{equation*}
with $\mu<1$ to guarantee the integrability in 0 and $\mu=1-4\gamma$ to bound the supremum contribution by a constant and have the maximum $\mu$ possible, with $\gamma<\sfrac{1}{4}$.
For the other terms, we follow the estimates in~\eqref{eq:fafd}, and we have
\begin{equation*}
\begin{split}
\sup_{S\in [0,T]} S^{-2\gamma} \int_0^{S}\|f_{a,b,d}(\tau,\cdot)\|^2_{L^2} \d\sigma &\leqslant \sup_{S\in [0,T]} S^{-2\gamma} C \varepsilon \int_0^{S\varepsilon^{-2}}\sigma^{-1/2} e^{-4\delta^2\sigma}\d\sigma\\
& \leqslant \varepsilon^{1-4\gamma} \sup_{R\in [0,+\infty)} R^{-2\gamma}  \int_0^{R}\sigma^{-1/2}e^{-4\delta^2\sigma} \d\sigma.
\end{split}
\end{equation*}
Now 
$
R^{-2\gamma}  \int_0^{R}\sigma^{-1/2}e^{-4\delta^2\sigma} \d\sigma
$
can be bounded with $R^{-2\gamma}$ for $R>1$ and with $R^{1/2-2\gamma}$ for $0\leqslant R \leqslant 1$, and putting everything together we have~\eqref{e:spaceintime}.

We can now move on to the second term.
Using first that $f$ is bounded by a constant and then the mean value theorem, we have that for some $\xi$ between 0 and $S-R$ and $\eta \in (0,1)$, 
the second term is bounded by
\begin{equation*}
\begin{split}
\lefteqn{\sup_{0 \leqslant R \leqslant S \leqslant T} (S - R)^{- 2
  \gamma}  \int_0^R \| f (S - R + \tau, \cdot) - f (\tau, \cdot)
  \|^2_{L^2} \d  \tau}\\  
  & \leqslant  \sup_{0 \leqslant R \leqslant S \leqslant T} (S - R)^{\eta -
  2 \gamma} C_{\eta}  \int_0^R \int_{\R} \left|
  \frac{\partial}{\partial t}  (f_a + f_d + f_b + f_c) |_{t = \tau + \xi}
   \right|^{\eta} \d  k \d  \tau\\
  [\textrm{different} \chi] & =  \sup_{0 \leqslant R \leqslant S \leqslant T}
  (S - R)^{\eta - 2 \gamma} C_{\eta}  \int_0^R \int_{\R} \left|
  \frac{\partial}{\partial t}  (f_a + f_d) |_{t = \tau + \xi}  
  \right|^{\eta} \d  k \d  \tau\qquad \textrm{(A+D)}\\
  &   + \sup_{0 \leqslant R \leqslant S \leqslant T} (S - R)^{\eta - 2
  \gamma} C_{\eta}  \int_0^R \int_{\R} \left|
  \frac{\partial}{\partial t}  (f_b) |_{t = \tau + \xi}  
  \right|^{\eta} \d  k \d  \tau\qquad \qquad\textrm{(B)}\\
  &   + \sup_{0 \leqslant R \leqslant S \leqslant T} (S - R)^{\eta - 2
  \gamma} C_{\eta}  \int_0^R \int_{\R} \left|
  \frac{\partial}{\partial t}  (f_c) |_{t = \tau + \xi}  
  \right|^{\eta} \d  k \d  \tau \;. \qquad\qquad\textrm{(C)}
\end{split}
\end{equation*}
We consider the three components separately.

We start with B, and the same ideas will provide the bounds also for A and D.
\begin{equation*}
\begin{split}
  (B) & =  \sup_{0 \leqslant R \leqslant S \leqslant T} (S - R)^{\eta - 2
  \gamma} C_{\eta}  \int_0^R \int_{\R} \left|
  \frac{\partial}{\partial t}  (f_b) |_{t = \tau + \xi}  
  \right|^{\eta} \d  k \d  \tau\\
  & =  \sup_{0 \leqslant R \leqslant S \leqslant T} (S - R)^{\eta - 2
  \gamma} C_{\eta}  \int_0^R \int_{- 1 / \varepsilon}^{- \delta / \varepsilon}
  e^{\eta (\tau + \xi) \nu}  | (\nu  - (2 + k \varepsilon)^2 k^2) e^{- (\tau + \xi)  (2 + k \varepsilon)^2
  k^2} |^{\eta} \d  k \d  \tau\\
  & \leqslant  C \sup_{0 \leqslant R \leqslant S \leqslant T} (S - R)^{\eta -
  2 \gamma}  \int_0^R  \int_{\delta /
  \varepsilon}^{1 / \varepsilon} (1 + k^{2 \eta}) e^{- \eta
  8\tau k^2} \d  k \d  \tau\\ 
  & =  C \sup_{0 \leqslant R \leqslant S \leqslant T} (S - R)^{\eta - 2
  \gamma} \int_0^R  \int_{\delta /
  \varepsilon}^{1 / \varepsilon} (1 + k^{2 \eta}) e^{- \eta 4
  \tau k^2} \d  ke^{- \eta 4 \tau \delta^2 /
  \varepsilon^2} \d  \tau\\
  & \leqslant   C \sup_{0 \leqslant R \leqslant S \leqslant T} (S - R)^{\eta - 2
  \gamma}  \int_0^R \tau^{- (2 \eta + 1) / 2} e^{- \eta 4 \tau
\delta^2 / \varepsilon^2} \d  \tau\\
  & \leqslant  C \sup_{0 \leqslant R \leqslant S \leqslant T} (S - R)^{\eta -
  2 \gamma} (\varepsilon^{1 - 2 \eta} + \varepsilon) 
\end{split}
\end{equation*}
We need $\eta\geqslant 2\gamma$, so we take $\eta-2\gamma$, and for the integrability in time we need $\gamma<\sfrac{1}{4}$.

The pieces (A) and (D), as anticipated, are bounded exactly in the same way, up to different constants, so we skip the details.

We are just left with the (C) part to estimate. Our best option to get rid of the
difference in this case is to use the mean value theorem a second time, in $k$. We obtain
\begin{equation}\label{eq:estcsec6}
\begin{split}
  \textrm{(C)} & = C \sup_{0 \leqslant R \leqslant S \leqslant T} (S - R)^{\eta - 2
  \gamma} \int_0^R \int_{- \delta / \varepsilon}^{\delta /
  \varepsilon} \left| \frac{\partial}{\partial t}  (f_c) |_{t = \tau + \xi}
    \right|^{\eta} \d  k \d  \tau\\
  & \leqslant  C \sup_{0 \leqslant R \leqslant S \leqslant T} (S - R)^{\eta -
  2 \gamma}   \int_0^R e^{\eta (\tau + \xi) \nu}  \int_{- \delta /
  \varepsilon}^{\delta / \varepsilon} \left| \nu (e^{- (\tau + \xi)  (2 + k
  \varepsilon)^2 k^2} - e^{- (\tau + \xi) 4 k^2}) +\right.\\
  & \hspace*{2cm} +\left. \left(- (2 + k \varepsilon)^2
  k^2 e^{- (\tau + \xi)  (2 + k \varepsilon)^2 k^2} + 4 k^2 e^{- (\tau + \xi)
  4 k^2}\right)\right |^{\eta} \d  k \d  \tau\\
  & \leqslant C \sup_{0 \leqslant R \leqslant S \leqslant T} (S - R)^{\eta -
  2 \gamma} \int_0^R   \int_{- \delta /
  \varepsilon}^{\delta / \varepsilon}  | (e^{- (\tau + \xi)  (2
  + k \varepsilon)^2 k^2} - e^{- (\tau + \xi) 4 k^2}) |^{\eta} +\\
  &\hspace*{2cm} + \left| 4 k^2 e^{-
  (\tau + \xi) 4 k^2} - (2 + k \varepsilon)^2 k^2 e^{- (\tau + \xi)  (2 + k
  \varepsilon)^2 k^2} \right|^{\eta} \d  k \d  \tau.
\end{split}
\end{equation}
As anticipated we have now two more instances of the Mean Value Theorem, for
the functions $e^{- tx}$ and $xe^{- tx}$ in the variable $x$, taking values in
the interval 
\[
I:=\left[ 4 k^2, 4 k^2  \Big( 1 + \frac{k \varepsilon}{2}
\Big)^2 \right]\;,
\]
where the extrema of the interval might be switched due to
the (additional) conditions on $k$. But we can bound the size of the interval
anyway (as already done previously in the space bounds):
\[ | I | = \left| 4 k^2  \Big( 1 - \Big( 1 + \frac{k \varepsilon}{2}
   \Big)^2 \Big) \right| = | k |^3 \varepsilon | 4 + k \varepsilon |
   \leqslant 5 \varepsilon | k |^3 . \]
We can now consider the two pieces of (C) (called (C1) and (C2)) separately, one for
each difference and write:

\begin{equation*}
\begin{split}
  \text{(C1)} & \leqslant  C \sup_{0 \leqslant R \leqslant S \leqslant T} (S -
  R)^{\eta - 2 \gamma} \varepsilon^{\eta}  \int_0^R
  (\tau + \xi)^{\eta}   \int_{- \delta /
  \varepsilon}^{\delta / \varepsilon} | k |^{3 \eta} e^{- (\tau + \xi) \rho
  \eta} \d  k \d  \tau\\
  & \leqslant C \sup_{0 \leqslant R \leqslant S \leqslant T} (S - R)^{\eta -
  2 \gamma}  \varepsilon^{\eta} \int_0^R  \int_0^{\delta / \varepsilon} k^{3
  \eta} e^{- \tau k^2  \eta}
  \d  k \d  \tau \;,
\end{split}
\end{equation*}
where we took the value for
$\rho = 4 k^2  \left( 1 -\frac{\delta}{2} \right)^2$ 
that would maximise the exponential,
recalling that $\rho \in \left[ 4 k^2, 4 k^2  \left( 1 + \frac{k
\varepsilon}{2} \right)^2 \right]$. 
Moreover $0<\xi<S-R$. 

Now
\begin{equation*}
\begin{split}
  \text{(C1)} & \leqslant  C \sup_{0 \leqslant R \leqslant S \leqslant T} (S -
  R)^{\eta - 2 \gamma} \varepsilon^{\eta - \eta +
  \mu}  \int_0^R  \int_0^{\delta
  / \varepsilon} k^{2 \eta + \mu} e^{- \tau  k^2 \eta} \d  k \d 
  \tau\\
  & \leqslant C \sup_{0 \leqslant R \leqslant S \leqslant T} (S - R)^{\eta -
  2 \gamma} \varepsilon^{\mu}  \int_0^R \tau ^{- (\mu + 1) / 2-\eta} \d  \tau\\
 & \leqslant C \sup_{0 \leqslant R \leqslant S \leqslant T} (S - R)^{\eta -
  2 \gamma} \varepsilon^{\mu} 
  \leqslant C \varepsilon^{\mu}
\end{split}
\end{equation*}
if $-\mu/2-\eta>-1/2$, i.e. $\mu<1-2\eta$, and we consider the optimal case, $\eta= 2\gamma$, 
so we can take $\mu <1-4\gamma$. This is  slightly less then in the previous cases.

Now we do the same with the term (C2):
\begin{equation*}
\begin{split}
  \text{(C2)} & \leqslant C \sup_{0 \leqslant R \leqslant S \leqslant T} (S -
  R)^{\eta - 2 \gamma} \int_0^R  \int_{-
  \delta / \varepsilon}^{\delta / \varepsilon} \varepsilon^{\eta}  | k |^{3
  \eta} e^{- (\tau + \xi) \eta \rho}  | (1 - (\tau + \xi) \rho) |^{\eta}
  \d  k \d  \tau\\
  & \leqslant C \sup_{0 \leqslant R \leqslant S \leqslant T} (S - R)^{\eta -
  2 \gamma}  \varepsilon^{\eta}  \int_0^R  
  \int_0^{\delta / \varepsilon} (1 + k^{2 \eta}) k^{3
  \eta} e^{- \eta \tau  k^2} \d  k \d  \tau\\
  & \leqslant  C \sup_{0 \leqslant R \leqslant S \leqslant T} (S - R)^{\eta -
  2 \gamma} \varepsilon^{\mu}  \int_0^R  
  \int_0^{\delta / \varepsilon} k^{2 \eta + \mu} e^{- \eta \tau  k^2 }
  \d  k \d  \tau +\\
  &   + C \sup_{0 \leqslant R \leqslant S \leqslant T} (S - R)^{\eta - 2
  \gamma} \varepsilon^{\mu}  \int_0^R   \int_0^{\delta / \varepsilon} k^{5 \eta + \mu} e^{-\eta \tau k^2 } \d  k \d  \tau\;.
\end{split}
\end{equation*}
The first term is exactly what we had for (C1), so we go on only with the second
(C2.2)
\[ \text{(C2.2)} \leqslant C \sup_{0 \leqslant R \leqslant S \leqslant T} (S -
   R)^{\eta - 2 \gamma}  \varepsilon^{\mu}  \int_0^R
   \tau^{- (5 \eta + \mu + 1) / 2} \d  
   \tau \]
which is also very much alike (C1), except from a slightly different exponent.
In this case 
we need $-2<-5\eta -\mu-1)$, i.e. $\mu <1-5\eta$, and by taking $\eta=2\gamma$ as before, we get $\mu < 1-10\gamma$, with $\gamma$ small.
 So we have a final result analogous to the one for the first term~\eqref{e:spaceintime}, namely
\begin{equation}\label{e:spaceintime2}
\sup_{0\leqslant R \leqslant S \leqslant T} (S-R)^{-2\gamma}\int_0^R \|f(S-R+\tau,\cdot)-f(\tau,\cdot)\|^2_{L^2} \d\tau \leqslant c \varepsilon^{1-10\gamma-\kappa}.
\end{equation} 
Putting together~\eqref{e:spaceintime} and~\eqref{e:spaceintime2} we obtain the following bound.
\begin{lemma}
\label{lem:new2}
For all $T>0$ and for all $\kappa>0$, there exist $\varepsilon_0>0$, $\gamma_0>0$, and $C>0$ such that 
\begin{equation*}
\|f\|^2_{\mathcal{K}_\gamma}\leqslant C \varepsilon^{1-\kappa},
\end{equation*}
for all $\gamma\in (0,\gamma_0)$ and for all $\varepsilon\in (0,\varepsilon_0)$.
\end{lemma}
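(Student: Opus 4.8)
The plan is to estimate the two suprema that make up $\|f\|^2_{\mathcal{K}_\gamma}$ one at a time, in both cases exploiting the four-piece splitting $f=f_a+f_b+f_c+f_d$ from~\eqref{eq:fa}--\eqref{eq:fd} together with the pointwise bounds already assembled in Section~\ref{sec:errspace}. The first supremum is only a reweighted version of the $L^2(H^\gamma)$ computation with $\gamma=0$, so I expect it to follow with little extra work; the genuinely new object is the second, averaged-Hölder-in-time term, and this is where the argument must concentrate.

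For the first term I would simply revisit the integrals~\eqref{eq:fafd} and~\eqref{eq:intboundfc}, but now with the integration in $\tau$ cut off at $S$ rather than at $T$. The point is that integrating the $\tau$-dependent bounds over $[0,S]$ leaves an explicit nonnegative power of $S$, which for $\gamma$ small dominates the prefactor $S^{-2\gamma}$; the exponentially decaying pieces $f_a,f_b,f_d$ contribute a factor $\varepsilon^{1-4\gamma}$, while $f_c$ contributes $\varepsilon^{\mu}S^{1/2-\mu/2-2\gamma}$, and one optimizes by taking $\mu=1-4\gamma$. This yields the bound~\eqref{e:spaceintime}.

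For the second term the key device is the mean value theorem in time, which replaces the difference $f(S-R+\tau,\cdot)-f(\tau,\cdot)$ by $(S-R)^{\eta}$ times the $\eta$-th power of $\partial_t f$ evaluated at an intermediate time, thereby producing the required factor $(S-R)^{\eta-2\gamma}$ subject to $\eta\geqslant 2\gamma$. After the differentiation the pieces $f_a,f_b,f_d$ still decay like $e^{-c\tau k^2}$ on $|k|\geqslant\delta/\varepsilon$ and are handled exactly as in the first term. The delicate piece is $f_c$: here differentiation in $t$ does not create smallness, so I would apply the mean value theorem a \emph{second} time, now in $k$, to the functions $e^{-tx}$ and $xe^{-tx}$ on the interval $I=[4k^2,4k^2(1+k\varepsilon/2)^2]$, whose length is bounded by $5\varepsilon|k|^3$. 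This extracts a factor $\varepsilon^{\mu}$ by trading the surplus powers of $\varepsilon|k|\leqslant\delta$ against the Gaussian weight $e^{-\eta\tau k^2}$, and the two resulting sub-integrals (C1) and (C2.2) stay integrable at $\tau=0$ provided $\mu<1-2\eta$ and $\mu<1-5\eta$ respectively; taking $\eta=2\gamma$ forces $\mu<1-10\gamma$ and yields~\eqref{e:spaceintime2}.

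Combining~\eqref{e:spaceintime} and~\eqref{e:spaceintime2} gives $\|f\|^2_{\mathcal{K}_\gamma}\leqslant C(\varepsilon^{1-4\gamma}+\varepsilon^{1-10\gamma-\kappa})$, so for a prescribed $\kappa$ I would apply~\eqref{e:spaceintime2} with internal exponent $\kappa/2$ and choose $\gamma_0$ small enough that $10\gamma_0<\kappa/2$, which absorbs all the $\gamma$-dependent losses into a single $\varepsilon^{-\kappa}$ and produces the stated $\varepsilon^{1-\kappa}$. The main obstacle is clearly the $f_c$ contribution to the time-Hölder term: one must apply the mean value theorem twice while keeping the time integral convergent at $\tau=0$, and it is precisely the second application in $k$ that degrades the exponent from $1-4\gamma$ to $1-10\gamma$, so the $\varepsilon$-power produced there is the binding constraint fixing how small $\gamma$ must be taken.
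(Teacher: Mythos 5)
Your proposal is correct and matches the paper's own proof essentially step for step: the same treatment of the first supremum by rerunning the Section~\ref{sec:errspace} integrals with the time cutoff at $S$ and the choice $\mu=1-4\gamma$, the same mean value theorem in $t$ followed by a second mean value theorem in $k$ (applied to $e^{-tx}$ and $xe^{-tx}$ on the interval $I$ of length at most $5\varepsilon|k|^3$) for the $f_c$ piece, and the identical constraints $\mu<1-2\eta$, $\mu<1-5\eta$, $\eta=2\gamma$ leading to $\varepsilon^{1-10\gamma-\kappa}$ and the final choice of small $\gamma_0$. The only point worth making explicit is that passing from the squared difference $\|f(S-R+\tau,\cdot)-f(\tau,\cdot)\|_{L^2}^2$ to the $\eta$-th power of $\partial_t f$ uses first that $f$ is uniformly bounded (to lower the exponent from $2$ to $\eta$) and only then the mean value theorem, exactly as the paper does.
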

%
\section{Gaussian estimates}\label{sec:errgauss}
%
%
In this section we provide the technical estimates that we need to apply Lemma~\ref{lem:GausIC} in the proof of Theorem~\ref{thm:fullmain}, namely that the terms in 
\[
 \|f\|^2_{\cL^\infty_Q} = \sup_{S\in[0,T]} \int_\R  q(k) |f(S,k)|^2 (|k|^{2\gamma}+1)\d k
 +  \sup_{S,R\in[0,T]} \int_\R q(k) \frac{|f(S,k)-f(R,k)|^2}{|S-R|^{2\gamma}} \d k
\]
are small. We will consider the two suprema separately.

Recall that we have a bound for $q(k)$ from~\eqref{eq:starq}:
$
q(k)\leqslant C \min\{1,k^{-2}\}.
$

\subsection{First supremum}
We want to show for $\gamma\in[0,\sfrac{1}{4})$ that
\[
\sup_{S\in[0,T]} \int_\R  q(k) |f(S,k)|^2 (|k|^{2\gamma}+1)\d  k
\leqslant C(\varepsilon^{2}+\varepsilon^{\sfrac{1}{2}})\;.
\]
To prove it we use the form of $f$, that we know from equations~\eqref{eq:fa}, \eqref{eq:fb}, \eqref{eq:fc} and~\eqref{eq:fd}. 
As it was the case in Sections~\ref{sec:errspace} and~\ref{sec:errtime}, 
we can bound the pieces $f_a$, $f_b$ and $f_d$ in the same way, 
and use a slightly different approach for $f_c$. 
As in previous sections, we can consider separately the terms with $k^{2\gamma}$ and $1$, 
the second being a special case of the first one, when $\gamma =0$.

In the first three cases we have:
\begin{equation*}
\begin{split}
\textrm{(A,B,D)} &\leqslant \sup_{S\in[0,T]} C \int_{\delta/\varepsilon}^{\infty}  q(k) |e^{-4Sk^2}|^2 k^{2\gamma}\d  k\\
&\leqslant \sup_{S\in[0,T]} C \int_{\delta/\varepsilon}^{\infty} k^{2\gamma-2}\d k
\leqslant C  \varepsilon^{1-2\gamma},
\end{split}
\end{equation*}
which is small, as long as $\gamma\in[0,\sfrac{1}{2})$. The remaining new terms from $\tilde{f} - f$ are treated in a similar way.

Finally it remains to treat $f_c$, which requires some more care to treat,
as the exponentials in the integrand cannot be bounded with a constant
as it would result in a diverging integral. 
What we can do is to use the bound \eqref{eq:starq} on $q(k)$ 
and truncate in $\delta_0$ to get rid of the singularity in 0.

To be more precise using~\eqref{eq:boundfcabs} we obtain
\[
\begin{split}
 \int_{\R} q(k) |k|^{2\gamma} |f_c(S,k)|^2 \d k 
 &\leqslant  \int_{-\delta/\varepsilon}^{\delta/\varepsilon} q(k)  \varepsilon^2 S^2 |k|^{6+2\gamma}e^{-8S k^4} \d k \\
  &\leqslant C  \varepsilon^2 S^2 \int_{0}^{\delta_0 } k^{6+2\gamma} \d k 
  +  C \varepsilon^2 S^2 \int_{ \delta_0 }^{\delta/\varepsilon}    
  k^{4+2\gamma} \d k \\
    &\leqslant C \varepsilon^2 S^2 + C \varepsilon^\mu S^2 \int_{ \delta_0 }^{\delta/\varepsilon}    k^{2+2\gamma+\mu} \d k \\
    &\leqslant C \varepsilon^2 S^2 + C \varepsilon^\mu S^{(1-2\gamma-\mu)/2}
\end{split}
\]
provided that $0\leqslant 2\gamma +\mu<1$. We can take $\mu=\sfrac{1}{2}$ and we get for $\gamma\in[0,\sfrac{1}{4})$
\[
 \int_{\R} q(k) |k|^{2\gamma} |f_c(S,k)|^2 \d k 
 \leqslant C(\varepsilon^2 + \varepsilon^{\sfrac{1}{2}})\;.
\]
Note that, in particular, the bound holds also when $\gamma=0$.
%
%
\subsection{Second supremum}
%
%
Also for the second supremum we consider two cases, as above, when splitting $f$. 
We want to prove that for $\gamma\in[0,\sfrac{1}{4})$ one has
\[
\sup_{S,R\in[0,T]} \int_\R q(k) \frac{|f(S,k)-f(R,k)|^2}{|S-R|^{2\gamma}} \d k \leqslant C (\varepsilon^{1-4\gamma}+\varepsilon^{\sfrac{1}{5}})\;.
\]
We will use some of the estimates introduced in Section~\ref{sec:errtime}. 
For the easy pieces, namely $f_a$, $f_b$ and $f_d$, we have
\begin{equation*}
\begin{split}
\textrm{(A+D,B)} &\leqslant \sup_{S,R\in[0,T]} C |S-R|^{-2\gamma} \int_{\delta/\varepsilon}^{\infty} q(k) |f(S,k)-f(R,k)|^2 \d k\\
&\leqslant \sup_{S,R\in[0,T]} C |S-R|^{\eta-2\gamma} \int_{\delta/\varepsilon}^{\infty} k^{-2} |\frac{\partial}{\partial t}f(t,k)|^\eta \d k\\
&\leqslant \sup_{S,R\in[0,T]} C |S-R|^{\eta-2\gamma} \int_{\delta/\varepsilon}^{\infty} k^{2\eta-2} \d k\\
&\leqslant \sup_{S,R\in[0,T]} C |S-R|^{\eta-2\gamma} \varepsilon^{1-2\eta}, 
\end{split}
\end{equation*}
which is small if $\eta<1/2$ and if $\eta\geqslant 2\gamma$, so 
we can take $\eta =2\gamma$.
A similar estimate holds for the additional terms coming from $\tilde{f}-f$, since they are in the symmetric exponential tail.

There remains now just one case to check, what in Section~\ref{sec:errtime} was the term (C); we proceed in a way analogous to~\eqref{eq:estcsec6}.
\begin{equation*}
\begin{split}
\textrm{(C)} &\leqslant C \sup_{S,R\in[0,T]} |S-R|^{-2\gamma} \int_{-\delta/\varepsilon}^{\delta/\varepsilon} q(k)  |f(S,k)-f(R,k)|^2 \d k\\
&\leqslant C \sup_{S,R\in[0,T]} |S-R|^{\eta-2\gamma} \int_{-\delta/\varepsilon}^{\delta/\varepsilon} q(k) |\frac{\partial}{\partial t}f(t,k)|^\eta \d k\\
&\leqslant  C \sup_{S,R\in[0,T]} |S-R|^{\eta-2\gamma} \varepsilon^\eta \int_0^{\delta/\varepsilon}q(k)k^{3\eta}(1+k^{2\eta})\d k\\
[\textrm{Assume }\eta\geqslant 2\gamma]\quad & \leqslant C \varepsilon^\eta\int_0^{\delta/\varepsilon} q(k)k^{3\eta}(1+k^{2\eta})\d k\\
&\leqslant C \varepsilon^\eta[\int_0^{\delta_0}k^{3\eta}(1+k^{2\eta})\d k + \int_{\delta_0}^{\delta/\varepsilon} k^{5\eta-2}\d k]\\
&\leqslant C \varepsilon^\eta(1+[k^{5\eta-1}]_{\delta_0}^{\delta/\varepsilon}])\\
[\textrm{choose }\eta=1/5]\quad & \leqslant C\varepsilon^{1/5}.
\end{split}
\end{equation*}

\begin{remark}
The choice of $\eta = \sfrac{1}{5}$ might be optimised by choosing $\delta\ll 1$.
\end{remark}

\section*{Acknowledgements}
The authors would like to thank D.~Koshnevisan for the proof of the statement in  Remark~\ref{rem:conjlog}.


\end{document}